\documentclass[12pt]{amsart}
\usepackage[utf8]{inputenc}

\usepackage[margin=1.3in]{geometry}

\usepackage{amsmath}
\usepackage{amsfonts}
\usepackage{amssymb}
\usepackage{amsthm}
\usepackage{mathtools}
\usepackage{caption}
\usepackage{subcaption}
\usepackage{bbm}
\usepackage[export]{adjustbox}

\usepackage{stmaryrd}

\usepackage[all]{xy}

\usepackage{tikz-cd}
\usetikzlibrary{matrix}
\usepackage{graphicx} 
\usepackage{epstopdf}

\usepackage[linktocpage]{hyperref}
\hypersetup{
    colorlinks=true,
    linkcolor=blue,
    citecolor=blue,      
    urlcolor=blue,
}

\usepackage{color}
\definecolor{note}{rgb}{0,0,1}  

\newtheorem{theorem}{Theorem}
\newtheorem{definition}[theorem]{Definition}
\newtheorem{proposition}[theorem]{Proposition}
\newtheorem{lemma}[theorem]{Lemma}
\newtheorem{claim}[theorem]{Claim}
\newtheorem{corollary}[theorem]{Corollary}
\newtheorem{example}[theorem]{Example}
\newtheorem{remark}[theorem]{Remark}
\newtheorem{conjecture}[theorem]{Conjecture}

\numberwithin{equation}{section}
\numberwithin{theorem}{section}

\usepackage{enumitem}
\newcommand{\mylabel}[2]{#2\def\@currentlabel{#2}\label{#1}}

\usepackage{todonotes}


\usepackage[english]{babel}

\usepackage{hyphenat}

\usepackage[backend=bibtex,style=alphabetic,maxalphanames=4,maxnames=4]{biblatex}

\renewbibmacro{in:}{}

\DeclareDelimFormat[bib,biblist]{nametitledelim}{\addcomma\space}

\DeclareFieldFormat*{title}{\mkbibitalic{#1}\addcomma}
\DeclareFieldFormat*{journaltitle}{#1}
\DeclareFieldFormat*{volume}{\mkbibbold{#1}}
\DeclareFieldFormat{pages}{#1}
\DeclareFieldFormat[misc]{date}{preprint {#1}}
\DeclareFieldFormat{mr}{%
  MR\addcolon\space
  \ifhyperref
    {\href{http://www.ams.org/mathscinet-getitem?mr=MR#1}{\nolinkurl{#1}}}
    {\nolinkurl{#1}}}
    
\AtEveryBibitem{
  \clearfield{url}
  \clearfield{number}
  \clearfield{doi}
  \clearfield{issn}
  \clearfield{isbn}
  \clearfield{eprintclass}
}
\AtEveryBibitem{\ifentrytype{book}{\clearfield{pages}}{}}

\bibliography{hecke}

\usepackage{fancyhdr}
\pagestyle{fancy}
\fancyhf{}
\fancyhead[CE]{\fontsize{9}{10}\selectfont \leftmark}
\fancyhead[RO,LE]{\fontsize{9}{10}\selectfont\thepage}


\usepackage{todonotes}

\title{A link invariant from higher-dimensional Heegaard Floer homology}
\author{Tianyu Yuan}
\address{Beijing International Center for Mathematical Research, Peking University, Beijing 100871, China}
\email{ytymath@pku.edu.cn} \urladdr{}
\date{\today}

\keywords{Higher-dimensional Heegaard Floer homology, Khovanov homology}

\subjclass[2010]{Primary 53D40; Secondary 57M27.}

\begin{document}

\maketitle

\begin{abstract}

We define a higher-dimensional analogue of symplectic Khovanov homology. Consider the standard Lefschetz fibration $p\colon W\to D\subset\mathbb{C}$ of a $2n$-dimensional Milnor fiber of the $A_{2\kappa-1}$ singularity. We represent a link by a $\kappa$-strand braid, which is expressed as an element $h$ of the symplectic mapping class group $\mathrm{Symp}(W,\partial W)$. We then apply the higher-dimensional Heegaard Floer homology machinery to the pair $(\boldsymbol{a},h(\boldsymbol{a}))$, where $\boldsymbol{a}$ is a collection of $\kappa$ unstable manifolds of $W$ which are Lagrangian spheres. We prove its invariance under arc slides and Markov stabilizations, which shows that it is a link invariant.
This work constitutes part of the author's PhD thesis.

\end{abstract}

\tableofcontents

\section{Introduction}

Many powerful Floer-theoretic invariants of knots and links have emerged over the past two decades. 
These include Heegaard Floer homology \cite{ozsvath2004holomorphic} and knot Floer homology in dimension 1; symplectic Khovanov homology \cite{seidel2006link} and knot contact homology \cite{ekholm2013knot} in dimension 2. 
Here when we say ``dimension $n$'', we are taking the ambient symplectic manifold to be $2n$-dimensional and the Lagrangian submanifolds (if we are talking about Lagrangian intersection Floer thoeries) to be $n$-dimensional. In \cite{manolescu2007link} Manolescu also used quiver varieties to define a higher-dimensional analogue of $\mathfrak{sl}(n)$-homologies. 

Along similar lines, the aim of this paper is to construct a link invariant using higher-dimensional Heegaard Floer homology, which is defined in \cite{colin2020applications} as a higher-dimensional analogue of Heegaard Floer homology in a cylindrical setting. 

More specifically, in dimension 1, Lipschitz \cite{lipshitz2006cylindrical} proved the equivalence between Ozsv\'ath and Szab\'o's Heegaard Floer homology \cite{ozsvath2004holomorphic} and its cylindrical analogue. In dimension 2, Mak and Smith \cite{mak2020fukayaseidel} established the equivalence of symplectic Khovanov homology and its cylindrical interpretation. Colin, Honda, and Tian \cite{colin2020applications} then defined a higher-dimensional analogue of cylindrical Heegaard Floer homology, which helps place the cylindrical symplectic Khovanov homology in a more general framework. 
In this paper, the ambient manifold is a $2n$-dimensional Milnor fiber of the $A_{2\kappa-1}$-singularity $p:W\to D\subset\mathbb{C}$, extending the case of $n=1$ considered in \cite{colin2020applications}. 
Given a link, we consider its $\kappa$-strand braid representation $\sigma$, which corresponds to an element $h$ of the symplectic mapping class group $\mathrm{Symp}(W,\partial W)$. 
There is a natural collection of $\kappa$ Lagrangian spheres $\boldsymbol{a}$ by the matching cycle construction between pairs of critical points of $p$. 
We then apply the higher-dimensional Heegaard Floer homology machinery to the pair $(\boldsymbol{a},h(\boldsymbol{a}))$ to define the link invariant and denote the homology group by $Kh^\sharp(\widehat{\sigma})$.

Though cylindrical versions of Heegaard Floer theories are more convenient for visualizing pseudoholomorphic curves, the original theories defined in the symmetric products $\mathrm{Sym}^\kappa(M)$ have their advantages: In dimension 1, Perutz \cite{perutz2008hamiltonian} proved that Lagrangians in $\mathrm{Sym}^\kappa(\Sigma)$ related by a handle slide are in fact Hamiltonian isotopic for some specific symplectic form, which directly implies the handle slide invariance property without curve counting techniques in \cite{ozsvath2004holomorphic}. 
In dimension 2, Seidel and Smith \cite{seidel2006link} considered nilpotent slices instead of $\mathrm{Sym}^\kappa(M)$, which was shown to be a subset of $\mathrm{Hilb}^\kappa(M)$ by Manolescu \cite{manolescu2006nilpotent}. 
Inside the nilpotent slice, matching cycles as Lagrangians related by arc slides are also Hamiltonian isotopic, which is not obvious in the cylindrical formulation. 
Mak and Smith \cite{mak2020fukayaseidel} then showed that the cylindrical version is equivalent to the original symplectic Khovanov homology in nilpotent slices. 

However, in higher dimensions, the cylindrical symplectic Khovanov homology with vanishing cycles $\kappa$-tuples of $S^{n}$ does not have its ``original'' version; at this moment we do not know how to put the theory inside a nilpotent slice setting. 
Another problem is that Hilbert schemes of points in higher dimensions are not smooth, so we need new ways to resolve the singularities along the diagonal of $\mathrm{Sym}^\kappa(M)$.
One possible solution is to restrict to some smooth stratum of $\mathrm{Hilb}^\kappa(M)$. 
For example, the subset of subschemes where each support point is of length at most 3 (at most triple point) is smooth. 
However we will not continue the discussion in this paper further. 
It would be interesting to study this problem in future.
~\\

\noindent
Therefore, we will adopt the curve counting approach as in \cite{ozsvath2004holomorphic} and \cite{colin2020applications} to prove the invariance under arc slides and Markov stabilizations. 

In Section \ref{section-definition}, we begin with a brief review of Section 9 of \cite{colin2020applications}, providing necessary notations, definitions and prerequisite theorems. 
Then we state the main result. We use a subsection to explain the Morse flow tree theory and its relation to pseudoholomorphic curves originated from \cite{fukaya1997zero}, which is crucial in our proof. 

In Section \ref{section-arc}, we show the arc slide invariance by counting pseudoholomorphic curves with certain boundary Lagrangians. The idea is to stretch the curve into several parts so that each one is easy to count by elementary model calculation.

In Section \ref{section-model}, we perform a model calculation of pseudoholomorphic quadrilaterals, which will be used in Section \ref{section-markov} and Section \ref{section-example} for several times.

In Section \ref{section-markov}, we translate the Markov stabilization into the gluing of pseudoholomorphic curves, which we count by Morse flow tree arguments instead. 

In Section \ref{section-example}, we compute $Kh^\sharp(\widehat{\sigma})$ for unknots, Hopf links and trefoils.
~\\

\noindent
\textbf{Acknowledgements.} I would like to thank Ko Honda for countless discussions and introducing this project to me. I also thank Yin Tian for numerous ideas and suggestions, and thank Eilon Reisin-Tzur for his patient revision. 
I am partially supported by China Postdoctoral Science Foundation 2023T160002.

\section{Definitions and main results}
\label{section-definition}
\subsection{Higher-dimensional analogue of symplectic Khovanov homology}
\label{subsection-definition}
This subsection works as a review of \cite{colin2020applications}, so most proofs and details are omitted. 

Let $\widetilde{D}=\{-2\leq\mathrm{Re}\,z,\mathrm{Im}\,z\leq 2\}\subset\mathbb{C}_z$. 
We consider the standard $2n$-dimensional Lefschetz fibration
\begin{equation*}
   \widetilde{p}\colon\widetilde{W}\to\widetilde{D}\subset\mathbb{C}_z
\end{equation*}
for a Milnor fiber of the $A_{2k-1}$ singularity, where the regular fiber is $T^*S^{n-1}$. 
There are $2\kappa$ critical values $\boldsymbol{\widetilde{z}}=\{z_1,\dots,z_{2\kappa}\}$, where $\mathrm{Re}\,z_i=\mathrm{Re}\,z_{i+\kappa}$, $\mathrm{Im}\,z_i=-1$ and $\mathrm{Im}\,z_{i+\kappa}=1$, $i=1,\dots,\kappa$. Let
\begin{equation*}
   p\colon W\coloneqq\widetilde{p}^{-1}(D)\to{D}
\end{equation*}
be the restriction of $\widetilde{p}$ to $D=\widetilde{D}\cap \{\mathrm{Im}\,z\leq 0\}$. 
For $i=1,\dots,\kappa$, connect $z_i$ and $z_{i+\kappa}$ by straight arcs $\widetilde{\gamma}_i$. 
Then the matching cycles $\widetilde{a}=\{\widetilde{a}_1,\dots,\widetilde{a}_\kappa\}$ over $\{\widetilde{\gamma}_1,\dots,\widetilde{\gamma}_\kappa\}$ are Lagrangian spheres. 
Let $\boldsymbol{z}$, $a_i$, and $\gamma_i$ be ``half'' of $\boldsymbol{\widetilde{z}}$, $\widetilde{a}_i$, and $\widetilde{\gamma}_i$, i.e., their restrictions to $W$. 

Given a $\kappa$-strand braid $\sigma\in\mathrm{Diff}^+(D,\partial D,\boldsymbol{z})$, let $h_\sigma\in\mathrm{Symp}(W,\partial W)$ be the monodromy on $W$ which descends to $\sigma$ and let $\widetilde{h}_\sigma$ be the extension of $h_\sigma$ to $\widetilde{W}$ by identity.

In this paper we always do cohomology. The variant $CKh^\sharp(\widehat{\sigma})$ of the symplectic Khovanov cochain complex is defined as the higher-dimensional Heegaard Floer cochain complex, in the sense of \cite{lipshitz2006cylindrical} and \cite{colin2020applications}, denoted by $\widehat{CF}(\widetilde{W},\widetilde{h}_\sigma(\widetilde{\boldsymbol{a}}),\widetilde{\boldsymbol{a}})$. 
Specifically, a $\kappa$-tuple of intersection points of $\widetilde{h}_\sigma(\widetilde{\boldsymbol{a}})$ and $\widetilde{\boldsymbol{a}}$ is a $\kappa$-tuple $\boldsymbol{y}=\{y_1,\dots,y_\kappa\}$ where $y_i\in\widetilde{a}_i\cap\widetilde{h}_\sigma(\widetilde{a}_{\beta(i)})$ and $\beta$ is some permutation of $\{1,\dots,\kappa\}$. 
Then $\widehat{CF}(\widetilde{W},\widetilde{h}_\sigma(\widetilde{\boldsymbol{a}}),\widetilde{\boldsymbol{a}})$ is the free $\mathbb{F}[\mathcal{A}]\llbracket\hbar,\hbar^{-1}]$-module generated by all such $\kappa$-tuples $\boldsymbol{y}$, where the coefficient ring is discussed below.

To define the differential, let $F$ be a surface with boundary of Euler characteristic $\chi$ and $\dot{F}$ be the surface with boundary punctures. 
We start with the split almost complex structure $J_{\mathbb{R}\times[0,1]}\times J_{\widetilde{W}}$ on $\mathbb{R}\times[0,1]\times\widetilde{W}$, and apply a perturbation to achieve transversality, denoted by $J^{\lozenge}$. 

For $\boldsymbol{y},\boldsymbol{y}'\in\widehat{CF}(\widetilde{W},\widetilde{h}_\sigma(\widetilde{\boldsymbol{a}}),\widetilde{\boldsymbol{a}})$, let $\mathcal{M}^{\mathrm{ind}=1,A,\chi}_{J^\lozenge}(\boldsymbol{y},\boldsymbol{y}')$ be the moduli space of $u\colon\dot F\to\mathbb{R}\times[0,1]\times\widetilde{W}$ satisfying

\begin{enumerate}
    \item $du\circ J^\lozenge=J^\lozenge\circ du$;
    \item $u(\partial\dot F)\subset\mathbb{R}\times((\{1\}\times\widetilde{\boldsymbol{a}})\cup(\{0\}\times\widetilde{h}_\sigma(\widetilde{\boldsymbol{a}})))$;
    \item As $\pi_\mathbb{R}\circ u$ tends to $+\infty$ (resp. $-\infty$), $\pi_{\widetilde{W}}$ tends to $\boldsymbol{y}$ (resp. $\boldsymbol{y}'$), where $\pi_{\mathbb{R}}, \pi_{\widetilde{W}}$ are the projections of $u$ to $\mathbb{R}$ and $\widetilde{W}$.
\end{enumerate}

The differential is then defined as
\begin{equation}
\label{diff}
    d\boldsymbol{y}=\sum_{\boldsymbol{y}',\chi\leq\kappa,A\in\mathcal{A}}\#\mathcal{M}^{\mathrm{ind}=1,A,\chi}_{J^\lozenge}(\boldsymbol{y},\boldsymbol{y}')/\mathbb{R}\cdot\hbar^{\kappa-\chi}\cdot e^A\cdot\boldsymbol{y}'.
\end{equation}

We write $Kh^\sharp(\widehat{\sigma})$ for the cohomology group $\widehat{HF}(\widetilde{W},\widetilde{h}_\sigma(\widetilde{\boldsymbol{a}}),\widetilde{\boldsymbol{a}})$.

The coefficient ring $\mathbb{F}[\mathcal{A}]\llbracket\hbar,\hbar^{-1}]$ (power series in $\hbar$ and polynomial in $\hbar^{-1}$) keeps track of the relative homology class and Euler characteristic of the domain, where
\begin{equation*}
    \mathcal{A}=H_2([0,1]\times\widetilde{W},(\{1\}\times\widetilde{\boldsymbol{a}})\cup(\{0\}\times\widetilde{h}_\sigma(\widetilde{\boldsymbol{a}}));\mathbb{Z}).
\end{equation*}

The following lemmas justify the use of coefficient $\mathbb{F}[\mathcal{A}]\llbracket\hbar,\hbar^{-1}]$ for $n=2$ and $\mathbb{F}\llbracket\hbar,\hbar^{-1}]$ for $n>3$:

\begin{lemma}
    For fixed ${\boldsymbol{y}}$,${\boldsymbol{y}'}$ and $\chi$, $\#\mathcal{M}^{\operatorname{ind}=1,\chi}_{J^\lozenge}(\textbf{y},\textbf{y}')/\mathbb{R}$ is finite.
\end{lemma}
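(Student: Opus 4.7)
The plan is to show that $\mathcal{M}^{\operatorname{ind}=1,\chi}_{J^\lozenge}(\boldsymbol{y},\boldsymbol{y}')/\mathbb{R}$ is a compact $0$-dimensional manifold, which forces it to be a finite set. Transversality, baked into the choice of the perturbation $J^\lozenge$, yields smoothness of the parametrized moduli space of expected dimension equal to the index; after quotienting by the free $\mathbb{R}$-action the dimension is $\operatorname{ind}-1=0$. The real content is therefore compactness.

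For compactness I would apply the Gromov/SFT-type compactness theorem in the cylindrical target $\mathbb{R}\times[0,1]\times\widetilde{W}$, as set up in Section 9 of \cite{colin2020applications}. The three ingredients to verify are an a priori energy bound, the exclusion of bubbling, and control on escape to infinity (since $\widetilde{W}$ is noncompact). Because $\widetilde{W}$ is an exact symplectic Milnor fiber and both $\widetilde{\boldsymbol{a}}$ and $\widetilde{h}_\sigma(\widetilde{\boldsymbol{a}})$ are exact Lagrangian spheres, the symplectic area of any $u$ equals an action difference fixed by $\boldsymbol{y}$ and $\boldsymbol{y}'$, which gives the energy bound. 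The same exactness rules out nonconstant holomorphic spheres and disk bubbles on either boundary component. Escape to infinity is handled by the Lefschetz fibration: the base $\widetilde{D}$ is a compact domain in $\mathbb{C}$, and a standard maximum principle applied to $\widetilde{p}\circ\pi_{\widetilde{W}}\circ u$ (together with the convex/cylindrical structure at infinity inherited from \cite{colin2020applications}) confines the image of $u$ to a compact region of $\widetilde{W}$.

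Given these ingredients, a sequence in the moduli space converges to a holomorphic building whose levels have total index $1$ and total Euler characteristic $\chi$. Under transversality each nontrivial level has nonnegative index as a parametrized space, and hence index $\geq 1$ after quotienting its own $\mathbb{R}$-factor; summing to $1$ forces a single level, so no breaking occurs and the sequence converges inside the original moduli space. This yields compactness. Finally, to see that only finitely many relative classes $A\in\mathcal{A}$ can contribute to the union implicit in $\mathcal{M}^{\operatorname{ind}=1,\chi}_{J^\lozenge}$, combine the index formula, which expresses $\operatorname{ind}$ as a linear functional of $A$ plus a term depending only on $\chi$ and the asymptotics, with the uniform energy bound $\omega(A)\leq E$: the first cuts $A$ out of a coset of the kernel of that functional, and the second reduces this coset to a finite set.

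The main obstacle, in my view, will be the careful bookkeeping of limit configurations in this higher-dimensional cylindrical setting, in particular verifying that no exotic boundary degeneration occurs near the corners where $\widetilde{\boldsymbol{a}}$ meets $\widetilde{h}_\sigma(\widetilde{\boldsymbol{a}})$, and confirming that $J^\lozenge$ preserves enough of the Lefschetz/cylindrical structure for the maximum principle to apply. Both are technical points treated explicitly in \cite{colin2020applications}, so much of the argument will consist of citing the relevant compactness results there rather than of new estimates.
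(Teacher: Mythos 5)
Your outline is correct and is precisely the standard Gromov/SFT compactness-plus-transversality argument that the paper itself does not spell out: this lemma is stated without proof as part of the review of \cite{colin2020applications}, and the intended justification is the one you describe (exactness gives the energy bound and excludes bubbling, the Lefschetz fibration and cylindrical structure give confinement, and index additivity rules out multi-level breaking for a rigid-after-$\mathbb{R}$-quotient moduli space). The only quibble is a wording slip in the no-breaking step --- a nontrivial level carries a free $\mathbb{R}$-action and hence has parametrized index $\geq 1$, from which the quotient has dimension $\geq 0$, not the other way around --- but the conclusion and the overall argument are sound.
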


\begin{lemma}
\label{lemma-coeff}
    Suppose $\widetilde{W}$ is of dimension $2n$. For $n=2$, $\mathcal{A}\simeq\mathbb{Z}^{r-1}$, where $r$ is the number of connected components of $\widehat{\sigma}$; for $n>2$, $\mathcal{A}\simeq\{0\}$.
\end{lemma}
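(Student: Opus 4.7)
Since $L_0=\{0\}\times\widetilde{h}_\sigma(\widetilde{\boldsymbol{a}})$ and $L_1=\{1\}\times\widetilde{\boldsymbol{a}}$ live over disjoint time slices and the $\kappa$ matching cycles within each slice project to pairwise disjoint arcs in $\widetilde{D}$, the subspace $L_0\cup L_1\subset[0,1]\times\widetilde{W}$ is literally a disjoint union of $2\kappa$ copies of $S^n$. The ambient $[0,1]\times\widetilde{W}$ deformation retracts to $\widetilde{W}$, which by Milnor's theorem has the homotopy type of $\bigvee^{2\kappa-1}S^n$. The plan is to compute $\mathcal{A}$ via the long exact sequence of the pair.

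For $n>2$ the argument is immediate: in the segment
\[
H_2(L_0\sqcup L_1)\to H_2(\widetilde{W})\to\mathcal{A}\to H_1(L_0\sqcup L_1)\to H_1(\widetilde{W}),
\]
all four flanking groups vanish, so $\mathcal{A}\simeq\{0\}$.

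For $n=2$ the same sequence collapses to $\mathcal{A}\simeq\operatorname{coker}\varphi$, where $\varphi\colon\mathbb{Z}^{2\kappa}\to H_2(\widetilde{W})\simeq\mathbb{Z}^{2\kappa-1}$ sends each sphere's fundamental class to its $H_2$-class in the Milnor fiber. To analyze $\varphi$, I would introduce the auxiliary graph $\Gamma\subset\widetilde{D}$ whose vertices are the $2\kappa$ critical values and whose edges are the arcs $\widetilde{\gamma}_i$ together with their images under the natural extension of $\sigma$ by identity on the upper half of $\widetilde{D}$. Because this extension fixes the upper critical values and permutes the lower ones by the braid permutation, every vertex of $\Gamma$ has degree exactly $2$, so $\Gamma$ is a disjoint union of cycles; tracing these cycles is precisely the braid-closure procedure, so $\Gamma$ has exactly $r$ components. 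Each cycle $C\subset\Gamma$ bounds a disk $D_C\subset\widetilde{D}$, and filling $D_C$ with a vanishing-cycle family inside $\widetilde{W}$ produces a $3$-chain whose boundary is the alternating sum of matching cycles indexed by the edges of $C$; this yields one relation in $H_2(\widetilde{W})$ per cycle. A rank count then forces $\operatorname{rank}(\operatorname{im}\varphi)=2\kappa-r$, hence $\operatorname{rank}(\operatorname{coker}\varphi)=r-1$. Upgrading to a torsion-free identification $\mathcal{A}\simeq\mathbb{Z}^{r-1}$ amounts to checking primitivity of the image sublattice, which I would verify by expressing each $[\widetilde{a}_i]$ and $[\widetilde{h}_\sigma(\widetilde{a}_i)]$ in a distinguished basis of vanishing cycles and inspecting the resulting integer matrix.

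The main difficulty, in my view, is the $3$-chain construction when $D_C$ has critical values of $\widetilde{p}$ in its interior: the vanishing-cycle family then acquires Dehn-twist monodromy around each interior critical value, and one must invoke the Picard--Lefschetz formula---together with the identity $\tau_V^2=\mathrm{id}$ on $H_2$ for a Lagrangian $S^2$ of self-intersection $-2$---to show that the correction terms remain inside $\operatorname{im}\varphi$. A cleaner alternative would be to bypass the $3$-chain altogether and complete the computation by integer linear algebra in a distinguished vanishing-cycle basis.
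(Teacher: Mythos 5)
Your argument is correct and follows essentially the same route as the paper: the long exact sequence of the pair kills everything for $n>2$, and for $n=2$ reduces $\mathcal{A}$ to the cokernel of the map sending the $2\kappa$ spheres of $\widetilde{\boldsymbol{a}}\cup\widetilde{h}_\sigma(\widetilde{\boldsymbol{a}})$ into $H_2(\widetilde{W})\simeq\mathbb{Z}^{2\kappa-1}$, computed combinatorially via the graph of arcs whose components are the components of $\widehat{\sigma}$. The only remark is that your $3$-chain/Picard--Lefschetz detour (and the deferred primitivity check) is heavier than needed: identifying a matching cycle's class with the boundary of its arc in $H_1(\widetilde{D},\boldsymbol{\widetilde{z}})\simeq\widetilde{H}_0(\boldsymbol{\widetilde{z}})$ exhibits the cokernel directly as $\widetilde{H}_0(\Gamma)\simeq\mathbb{Z}^{r-1}$, which is automatically free and settles both inequalities of your rank count at once.
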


\begin{proof}
    Denote $X=[0,1]\times\widetilde{W}$, $Y=(\{1\}\times\widetilde{\boldsymbol{a}})\cup(\{0\}\times\widetilde{h}_\sigma(\widetilde{\boldsymbol{a}})))$, and $i\colon Y\to X$ for the inclusion.

    If $n=2$, $H_3(X,Y)$ and $H_1(Y)$ are trivial. By the long exact sequence for relative singular homology, $H_2(X,Y)\simeq H_2(X)/i_*H_2(Y)$. 
    Since there are $2\kappa$ critical points of $\widetilde{p}\colon\widetilde{W}\to\widetilde{D}$, $H_2(X)$ is generated by $2\kappa-1$ matching cycles over arcs connecting pairs of critical values in $\widetilde{D}$. 
    $H_2(Y)$ is simply generated by collections $\widetilde{\boldsymbol{a}}$ and $\widetilde{h}_\sigma(\widetilde{\boldsymbol{a}})$. 
    Therefore, $H_2(X,Y)$ is generated by $2\kappa-1$ arcs quotient by collapsing arcs corresponding to $\widetilde{\boldsymbol{a}}$ and $\widetilde{h}_\sigma(\widetilde{\boldsymbol{a}})$. 
    The number of remaining nontrivial arcs is $r-1$, where $r$ is the number of connected components of the braid $\widehat{\sigma}$. 
    
    If $n>2$, $H_2(X)$ and $H_1(Y)$ are trivial, so $H_2(X,Y)\simeq\{0\}$ by the relative homology exact sequence. 
\end{proof}

The following lemma computes the Fredholm index, which is a generalization from \cite{colin2012equivalence} and the proof is omitted:
\begin{lemma}
    The Fredholm index of $u$ is
    \begin{equation*}
        \mathrm{ind}(u)=(n-2)(\chi-\kappa)+\mu(u),
    \end{equation*}
    where $\mu(u)$ is the Maslov index.
    \label{lemma-index}
\end{lemma}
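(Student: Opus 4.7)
The plan is to derive the formula from the Riemann--Roch index theorem for Cauchy--Riemann operators on punctured Riemann surfaces with totally real boundary conditions, following the templates of \cite{colin2012equivalence} (for $n=2$) and \cite{lipshitz2006cylindrical} (for $n=1$). The only place where the fiber dimension enters non-trivially is through the rank of the bundle $u^*T\widetilde W$, so the proof essentially amounts to redoing the index count with this rank kept general.

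The first step is to linearize $\bar\partial_{J^\lozenge}$ at $u$ to produce a real-linear Fredholm operator $D_u$ on appropriate weighted Sobolev sections of $u^*T(\mathbb R\times[0,1]\times\widetilde W)$ with totally real boundary conditions inherited from condition (ii) of the moduli problem. Along $u$, the target tangent bundle splits as
\begin{equation*}
u^*T(\mathbb R\times[0,1]\times\widetilde W) \;\cong\; u^*T(\mathbb R\times[0,1]) \,\oplus\, u^*T\widetilde W,
\end{equation*}
and the Lagrangian boundary condition, being a product of the strip boundary $\mathbb R\times\{0,1\}$ with a leaf of $\widetilde{\boldsymbol a}$ or $\widetilde h_\sigma(\widetilde{\boldsymbol a})$, splits compatibly. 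Consequently $\mathrm{ind}(D_u)$ is additive over the two summands, and the complex structure and asymptotic markers are likewise compatible with the splitting.

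Next I would compute each summand separately. The cylindrical summand is a trivial complex line bundle with constant boundary condition and vanishing Maslov class, so its Fredholm index is purely topological; keeping careful track of the $2\kappa$ asymptotic punctures (each mapped to a strip-like end in the $\mathbb R\times[0,1]$ direction) yields a contribution of $-(\chi-\kappa)$, exactly the contribution isolated from the line-bundle part of the $n=1$ Lipshitz computation in \cite{lipshitz2006cylindrical}. The fiber summand is a complex rank-$n$ bundle with totally real boundary pair whose Maslov index is $\mu(u)$ by definition; applying the Riemann--Roch formula with Lagrangian boundary conditions on a punctured surface gives $(n-1)(\chi-\kappa)+\mu(u)$. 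Summing yields
\begin{equation*}
\mathrm{ind}(u) \;=\; -(\chi-\kappa) + (n-1)(\chi-\kappa) + \mu(u) \;=\; (n-2)(\chi-\kappa)+\mu(u).
\end{equation*}

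The main obstacle I anticipate is the bookkeeping at the asymptotic ends: one has to check that the Conley--Zehnder-type contribution at each puncture transports correctly once the cylindrical factor is split off, and that the non-degeneracy of $\widetilde{\boldsymbol a}\pitchfork\widetilde h_\sigma(\widetilde{\boldsymbol a})$ reproduces the same puncture correction used in \cite{colin2012equivalence}. This is a purely local computation at each transverse intersection and is independent of the fiber dimension, so once it is established in the $n=2$ case it applies verbatim here; the remaining complex directions are absorbed uniformly by the Riemann--Roch contribution in the fiber summand.
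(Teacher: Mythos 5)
The paper does not actually supply a proof of this lemma (it is stated with ``the proof is omitted'', citing \cite{colin2012equivalence}), so I can only assess your argument on its own merits. Your overall strategy --- split $u^*T(\mathbb{R}\times[0,1]\times\widetilde{W})$ into the base summand and the fiber summand, compute each index by Riemann--Roch, and add --- is exactly the standard route and is surely what the omitted proof does. The final formula is correct. However, the two intermediate contributions you assert are each wrong, by amounts that happen to cancel, so the derivation as written does not actually establish the result. For the fiber summand you claim the punctured Riemann--Roch gives $(n-1)(\chi-\kappa)+\mu(u)$; but for a rank-$n$ bundle the coefficient of $\chi$ in any Riemann--Roch count is $n$ (puncture corrections only contribute multiples of $\kappa$), and calibrating the per-puncture correction against the single Floer strip ($\chi=\kappa=1$, index $=\mu$) forces the fiber contribution to be $n\chi+\mu-n\kappa=n(\chi-\kappa)+\mu$. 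Correspondingly, the cylindrical summand contributes $2(\kappa-\chi)$, not $\kappa-\chi$: by Riemann--Hurwitz the $\kappa$-fold branched cover $\pi_{\mathbb{R}\times[0,1]}\circ u$ has $b=\kappa-\chi$ branch points, and the deformation space of the branched cover (which is where the variations of the domain complex structure live, since $j$ is pulled back from the base) has dimension $2b$, each interior branch point moving in the two-dimensional base. Your attribution of the term $g-\chi(S)$ in Lipshitz's formula entirely to the line-bundle part is the source of the error: in the $n=1$ accounting that term is $2(g-\chi)+(\chi-g)$, split between the two summands.

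A concrete check that exposes the discrepancy: take $\kappa=2$, $\chi=1$ (a $4$-punctured disk double-covering the strip with one branch point), $n=1$. The correct contributions are $2(\kappa-\chi)=2$ from the base and $1\cdot(\chi-\kappa)+\mu=\mu-1$ from the fiber (a holomorphic rectangle with fixed cross-ratio has expected dimension $\mu-1$, consistent with index-one rectangles in Heegaard Floer homology), totalling $\mu+1=(n-2)(\chi-\kappa)+\mu$. Your split would give $1$ from the base and $\mu$ from the fiber, which also totals $\mu+1$, but neither summand matches what the corresponding Riemann--Roch computation actually produces, so if you carried out either computation honestly you would not land on your stated values. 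To repair the proof, keep the same decomposition but (i) identify the base contribution with twice the branching number $2(\kappa-\chi)$, absorbing the domain moduli there, and (ii) apply the rank-$n$ punctured Riemann--Roch with the per-puncture correction $n/2$ at each of the $2\kappa$ transverse asymptotics to get $n(\chi-\kappa)+\mu$ for the fiber.
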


The Floer homology group $Kh^\sharp(\widehat{\sigma})$ is well-defined and now we state our main result, which is a higher-dimensional version of Theorem 1.2.1 of \cite{colin2020applications}:
\begin{theorem}
    \label{main-thm}
    For $n=2$ or $n>3$, $Kh^\sharp(\widehat{\sigma})$ is a link invariant, i.e., it is independent of the choice of arcs $\{\widetilde{\gamma}_1,\dots,\widetilde{\gamma}_\kappa\}$ and Lagrangian thimbles $\{\widetilde{a}_1,\dots,\widetilde{a}_\kappa\}$, and is invariant under Markov stabilizations.
\end{theorem}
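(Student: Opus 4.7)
The plan is to establish the two independent invariance properties claimed — independence from the system of arcs $\{\widetilde\gamma_i\}$ (which entails independence from the thimbles $\{\widetilde a_i\}$) and invariance under Markov stabilization — by the curve-counting approach mentioned at the end of the introduction, following the blueprint of \cite{colin2020applications} but with the modifications forced by working in dimension $2n$. Note that braid conjugation $\sigma\mapsto\tau\sigma\tau^{-1}$ is absorbed into the arc independence statement, since after pre-composing $\widetilde h_{\tau\sigma\tau^{-1}}(\widetilde{\boldsymbol a})$ with $\widetilde h_\tau^{-1}$ the problem reduces to comparing two arc systems $\widetilde{\boldsymbol a}$ and $\widetilde h_\tau^{-1}(\widetilde{\boldsymbol a})$.

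For arc independence, any two choices of arcs are related by a finite sequence of elementary arc slides, so it suffices to treat a single slide $\widetilde a_i\rightsquigarrow \widetilde a_i'$. I would construct a continuation-type map
\[
\Phi\colon \widehat{CF}(\widetilde W,\widetilde h_\sigma(\widetilde{\boldsymbol a}),\widetilde{\boldsymbol a})\to \widehat{CF}(\widetilde W,\widetilde h_\sigma(\widetilde{\boldsymbol a}'),\widetilde{\boldsymbol a}')
\]
by counting rigid pseudoholomorphic triangles in $\mathbb R\times[0,1]\times\widetilde W$ whose three boundary components lie on $\widetilde{\boldsymbol a}$, $\widetilde{\boldsymbol a}'$ and $\widetilde h_\sigma(\widetilde{\boldsymbol a})$. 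A homotopy inverse is assembled symmetrically and the two compositions are shown to be chain homotopic to the identity by counting one-parameter families of quadrilaterals. The technical content is the enumeration of rigid triangles: after stretching along a hypersurface separating the local slide region from the rest of $\widetilde W$, every rigid triangle degenerates into a standard configuration supported near the slide together with thin trivial strips, and the standard configuration is enumerated by the explicit quadrilateral model computation of Section \ref{section-model}.

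For Markov stabilization, passing from $\sigma$ to $\sigma'$ introduces two new critical values of $\widetilde p$, a new matching sphere $\widetilde a_{\kappa+1}$, and one additional Dehn twist supported in a neighborhood of these new values. The generators of $CKh^\sharp(\widehat{\sigma'})$ split into those in natural bijection with generators of $CKh^\sharp(\widehat\sigma)$ and an extra family of generators that pair up into acyclic summands. To identify the cancelling differential in the extra summands I would appeal to the Morse flow tree correspondence of Fukaya–Oh reviewed in Section \ref{section-definition}: in the stretching limit for the stabilization neck, rigid pseudoholomorphic strips and polygons correspond to rigid gradient flow trees on the vanishing spheres, and the latter are finite and combinatorially transparent in the local model. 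Counting the flow trees then produces the cancellation and exhibits the explicit quasi-isomorphism $CKh^\sharp(\widehat{\sigma'})\simeq CKh^\sharp(\widehat\sigma)$.

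The main obstacle is the arc slide step in dimensions $n\ge 2$: in the one-dimensional case one can enumerate holomorphic triangles by direct combinatorics of their domains, but no such description is available here, so the argument must pass through a careful neck-stretching plus gluing to reduce the triangle count to the model calculation of Section \ref{section-model}. I would expect this stretching-and-gluing step to consume the bulk of the technical work, along with ensuring transversality for the perturbed almost complex structure $J^\lozenge$, controlling bubbling, and keeping track of Maslov and Fredholm indices via Lemma \ref{lemma-index}. The hypothesis $n=2$ or $n>3$ enters here and in Lemma \ref{lemma-coeff}: it is precisely the range in which the relative homology coefficient ring behaves tractably and in which the Fredholm index formula keeps the low-index moduli spaces rigid, while in $n=3$ one loses control of the compatibility between the two sides.
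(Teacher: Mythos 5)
Your overall architecture matches the paper's: arc-slide invariance via a $\mu_2$-type continuation map with homotopy inverse, neck stretching to reduce the relevant curve counts to local models, and a bijection of generators plus a local gluing argument for Markov stabilization. However, the proposal defers exactly the steps that constitute the proof, and in two places the picture you sketch does not line up with what is actually needed.

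First, before any composition of triangle maps can be discussed, the continuation maps must be well defined: the paper must show that the distinguished intersection tuples $\boldsymbol{\Theta}$ and $\boldsymbol{\Xi}$ are cocycles (Lemma \ref{lemma-cycle}). This is where the hypothesis $n=2$ or $n>3$ genuinely enters --- via the index formula of Lemma \ref{lemma-index} together with the parity of $\chi$ for two-punctured domains, one rules out index-$1$ curves out of $\boldsymbol{\Theta}$, and the argument visibly fails for $n=3$. Your attribution of the dimension restriction to the coefficient ring (Lemma \ref{lemma-coeff}) is off: that lemma is unproblematic for all $n>2$. Second, the hard counts are not the triangles defining $\Phi$ and $\Psi$ but the $\chi=0$ curves from $\boldsymbol{\Xi}$ to $\boldsymbol{\Theta}$ passing through a generic point constraint $\boldsymbol{w}$ (Theorems \ref{theorem-half} and \ref{theorem-full}), which show $\Psi\circ\Phi\simeq\mathrm{id}$. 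These do not reduce to the quadrilateral model of Section \ref{section-model}; they require their own model moduli spaces of disks in $\mathbb{C}^n$ with boundary on a Clifford-type torus and a thimble, whose evaluation maps sweep out half of an $S^{n-1}$-family of Morse--Bott Reeb chords, together with the involution constraint coming from the branched double cover structure of the projection to $\mathbb{R}\times[0,1]$. None of this is supplied or even anticipated in the proposal beyond the phrase ``careful neck-stretching plus gluing.'' Finally, a minor divergence: for Markov stabilization the paper arranges a direct chain \emph{isomorphism} with generators in bijection (the nontrivial differential comparison being handled by the quadrilateral count of Lemma \ref{lemma-diff}), rather than cancelling an extra acyclic summand as you propose; your version could be made to work but would require an additional cancellation argument the paper avoids.
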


\begin{remark}
    $Kh^\sharp(\widehat{\sigma})$ is a relative graded module over $\mathbb{F}[\mathcal{A}]\llbracket\hbar,\hbar^{-1}]$. From (\ref{diff}) and Lemma \ref{lemma-index}, $\hbar$ is of degree $2-n$. 
\end{remark}

\begin{remark}
    Note that in Theorem \ref{main-thm}, the case of $n=3$ is excluded. The reason is explained by the remark after Lemma \ref{lemma-cycle} in Section \ref{section-arc}.
\end{remark}

The computation of $Kh^\sharp(\widehat{\sigma})$ for simple links in Section \ref{section-example} gives results highly similar to Khovanov homology \cite{khovanov2000categorification, williams2008computations}. We will not discuss their relation further in this paper. Instead, we leave it as a conjecture:

\begin{conjecture}
    \begin{equation*}
        Kh^{\sharp,k}(\widehat{\sigma})\simeq\bigoplus_{i-j=k\,\operatorname{mod}\,n-2}Kh^{i,j}(\sigma).
    \end{equation*}
    \label{conj}
\end{conjecture}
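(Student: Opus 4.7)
The natural strategy is to route through the $n=2$ symplectic Khovanov homology of Seidel--Smith: for $n=2$ the conjecture specializes (with $\bmod\,0$ interpreted as equality) to the Abouzaid--Smith theorem identifying symplectic and combinatorial Khovanov homology, so the substantive content is the case $n>3$. One then aims to show that $Kh^{\sharp}(\widehat{\sigma})$ at general $n$ is isomorphic to the $n=2$ invariant with its bigrading collapsed by the action of $\hbar$.

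Concretely, the plan has three steps. \emph{Step 1 (generators).} A $\kappa$-tuple of intersection points of $\widetilde{h}_\sigma(\widetilde{\boldsymbol{a}})$ and $\widetilde{\boldsymbol{a}}$ projects to a $\kappa$-tuple of transverse crossings of matching arcs in $\widetilde{D}$; above each crossing the fiber data are two copies of the zero section of $T^*S^{n-1}$, intersecting transversely after a generic perturbation in a combinatorially $n$-independent way. This should yield a canonical bijection of generators for all $n\geq 2$. \emph{Step 2 (differentials).} The plan is to neck-stretch along the Lefschetz fibration $\widetilde{p}\colon\widetilde{W}\to\widetilde{D}$ and apply the Fukaya--Oh Morse flow tree correspondence, the same tool used for arc-slide invariance in Section~\ref{section-arc}. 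Since $\widetilde{D}$ and its matching arcs are independent of $n$, the trees themselves and their counts are $n$-independent, while the dimensional discrepancy is absorbed into the weight $\hbar^{\kappa-\chi}$ of Lemma~\ref{lemma-index}, with $\hbar$ of degree $2-n$. \emph{Step 3 (conclude).} Invoke Abouzaid--Smith to identify the $n=2$ theory with combinatorial Khovanov homology; the asserted collapse of the $(i,j)$-bigrading modulo $n-2$ is then bookkeeping, coming from the degree $2-n$ of $\hbar$ and the fact that the resulting cohomology carries only a $\mathbb{Z}/(n-2)$-grading.

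The hardest part will be the Morse flow tree reduction in the higher-dimensional setting. Several points have to be controlled simultaneously: one must show that every index-$1$ curve degenerates to a flow tree in the neck-stretch limit (ruling out nontrivial bubbling in the $T^*S^{n-1}$ fibers), achieve transversality after stretching without disturbing the Lagrangian boundary conditions, and check that the fiber-wise intersections of zero sections arrange themselves in the configuration predicted by the $n=2$ generators, possibly at the cost of a Hamiltonian perturbation concentrated in the fibers. A secondary concern is to make the chain-level isomorphism respect the $\hbar$-grading exactly, so that the predicted collapse modulo $n-2$ holds on the nose rather than up to an unspecified overall shift; this is likely to require a careful analysis of Maslov indices for matching cycles in dimension $n$ beyond what Lemma~\ref{lemma-index} provides at the level of individual curves.
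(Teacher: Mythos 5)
This statement is not proved in the paper: it is stated explicitly as Conjecture~\ref{conj} and left open (``We will not discuss their relation further in this paper. Instead, we leave it as a conjecture.''), so there is no proof of record to compare yours against. Your proposal should therefore be judged as a research strategy, and as such it contains at least one gap that is more than technical. The differential (\ref{diff}) counts curves of every Euler characteristic $\chi\leq\kappa$, weighted by $\hbar^{\kappa-\chi}$, and by Lemma~\ref{lemma-index} the index-$1$ condition reads $\mu(u)=1+(n-2)(\kappa-\chi)$. For $n=2$ the index is independent of $\chi$, so curves of arbitrarily negative Euler characteristic (higher-genus and multiply-connected domains) contribute to the differential; for $n>3$ each $\chi$ forces a different Maslov index. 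So the set of configurations being counted genuinely changes with $n$, and your claim that ``the trees themselves and their counts are $n$-independent, while the dimensional discrepancy is absorbed into the weight $\hbar^{\kappa-\chi}$'' does not survive this: Theorem~\ref{Fukaya-Oh} is a disk-to-tree correspondence and says nothing about the $\chi<\kappa$ curves that are unavoidable at $n=2$ (they are exactly what the annulus count in Theorem~\ref{theorem-full} wrestles with). Matching the two sides of the conjecture requires an argument that the higher-genus contributions at $n=2$ reassemble into the $\hbar$-graded pieces visible at large $n$, and no mechanism for that is proposed.

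Two smaller issues: first, at $n=2$ the coefficient ring is $\mathbb{F}[\mathcal{A}]\llbracket\hbar,\hbar^{-1}]$ with $\mathcal{A}\simeq\mathbb{Z}^{r-1}$ nontrivial (Lemma~\ref{lemma-coeff}), so the $n=2$ theory is not literally symplectic Khovanov homology; you would need the Mak--Smith cylindrical comparison together with a treatment of the $e^A$ decorations before invoking Abouzaid--Smith, and the latter is a characteristic-zero statement while the paper's computations are over characteristic $2$. Second, your Step~1 bijection of generators is fine in spirit, but the generators at interior crossings arise from Morse--Bott $S^{n-1}$-families broken into $\check{\cdot},\hat{\cdot}$ pairs, so the ``canonical'' bijection already depends on a choice of perturbation whose compatibility with the differential comparison has to be tracked. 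None of this makes the strategy hopeless --- it is broadly the comparison one would expect --- but as written it does not close the conjecture, and the paper itself does not claim to.
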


Note that Conjecture \ref{conj} implies that we have found nothing new other than the symplectic Khovanov homology \cite{seidel2006link}. While it may serve as a new approach to compute the symplectic Khovanov homology.

\subsection{Moduli space of Morse gradient trees}
\label{subsection-gradient}
If two closed Lagrangian submanifolds $L_0,L_1$ of a symplectic manifold $(M,\omega)$ are $C^1$-close to each other, and moreover they are exact Lagrangian isotopic, then by the Lagrangian neighbourhood theorem, one can view $L_1$ as the graph $\Gamma_{df}\subset T^*L_0$, where $f\colon L_0\to\mathbb{R}$. 
In this case, the count of pseudoholomorphic disks bounding $L_0,L_1$ can be reduced to the count of Morse gradient trajectories on $L_0$, which is much more convenient to compute. 
More generally, if we have exact Lagrangian isotopic $L_0,\dots,L_{k-1}$, where $L_i=\Gamma_{df_i}\subset T^*L_0$ for $f_i\colon L_0\to\mathbb{R},\,i=0,\dots,k-1$, then we count gradient trees, to be explained below. 
This alternative method of counting appears several times in Section \ref{section-arc} and \ref{section-markov}, so we make the statement precise here. 
The main reference is \cite{fukaya1997zero}, which generalizes Floer's correspondence between gradient trajectories and pseudoholomorphic strips \cite{floer1988morse}. 
There is also a simpler proof of \cite{fukaya1997zero} by Iacovino \cite{iacovino2008simple}, which considers the perturbation of Floer equations.

\begin{figure}[ht]
    \centering
    \includegraphics[height=6cm]{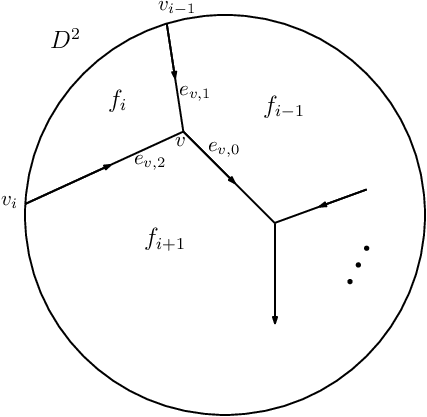}
    \caption{An embedding of $T$ into $D^2$.}
    \label{tree-definition}
\end{figure}

\begin{definition}
    A ribbon tree is a directed tree $T$ which satisfies:
    \begin{enumerate}[label=(\arabic*)]
        \item $T$ has no degree 2 vertex.
        \item For each internal vertex $v\in T$, there is a cyclic order of the edges attached to $v$, i.e., we have a labeling bijection $\ell_v\colon E_v\to\{0,1,\dots,k_v-1\}$, where $E_v$ is the set of edges attached to $v$ and $k_v$ is the degree of $v$ such that there is a unique outgoing edge and it is labeled 0. 
        Therefore, there are two labels for each internal edge coming from the two labeling functions corresponding to the endpoints of the edge and one label for each external edge.
        \item There is a cyclic order of external vertices of $T$ such that the root has label 0, i.e., we have a bijection $\ell\colon V_{ext}\to\{0,\dots,k-1\}$ where $V_{ext}$ is the set of external vertices and the one adjacent to the unique outgoing edge is labeled by 0.
        \item There exists an embedding $\phi\colon T\to D^2$ so that at each internal vertex $v$, $\phi(\ell_v^{-1}(0)),\dots,\phi(\ell_v^{-1}(k_v-1))$ are in counterclockwise order around $\phi(v)$. 
        Also, $\phi(V_{ext})\subset\partial D^2$ and $\phi(\ell^{-1}(0)),\dots,\phi(\ell^{-1}(k-1))$ are in counterclockwise order on $\partial D^2$. Note that the way we embed $T$ is not important. 
    \end{enumerate}
    In the definition univalent vertices are called external and the others are internal; edges adjacent to external vertices are called external and the others are internal.
\end{definition}

The edges of $T$ will represent gradient flows. 
Specifically, let $(M,g)$ be a Riemannian manifold and $f_0,\dots,f_{k-1}$ be $C^\infty$-functions on $M$ so that $f_{i}-f_{i+1}$ is Morse for each $i$ (where $f_{k}=f_0$). We define the moduli space of gradient trees $\mathcal{M}_g(M;\boldsymbol{f},\boldsymbol{p})$ to consist of pairs $(T,I)$ where $T$ is a ribbon tree and $I\colon T\backslash V_{ext}\to M$ is a continuous function satisfying:
\begin{enumerate}[label=(\arabic*)]
    \item $\lim_{x\to v_i} I(x)=p_i$, where $p_i$ is a critical point of $f_{i}-f_{i+1}$.
    \item External edges are identified with $(-\infty,0]$ except the outgoing one which is identified with $[0,\infty)$; each internal edge $e$ is identified with $[0,t(e)]$ where $t:E_{int}\to[0,\infty)$ is a length function defined on internal edges. Now $I$ takes the above intervals to $M$. 
    For each edge $e$ of $T$,
    \begin{equation*}
        \dot{I}|_e=-\nabla_g(f_{l(e)}-f_{r(e)}),
    \end{equation*}
    where $l(e)$ and $r(e)$ are defined with respect to the direction of $e$, i.e. if one looks in the positive direction of $e$ on $D^2$, then $l(e)$ (resp. $r(e)$) is the component of $D^2\backslash T$ on the left (resp. right) side of $e$.
\end{enumerate}

Figure \ref{tree-definition} shows an element $T$, together with an embedding of $T$ in $D^2$. Note that we assign each function $f_i$ to a corresponding region of $D^2\backslash T$.

\begin{lemma}[\cite{fukaya1997zero}]
    Fixing a generic choice of $\boldsymbol{f}$, $\mathcal{M}_g(M,\boldsymbol{f},\boldsymbol{p})$ is a smooth manifold of dimension
    \begin{equation}
        \sum_{v}\mathrm{ind}(v)-(k-1)n+(k-3),
    \end{equation}
    where $k$ is the number of boundary vertices and $n$ is the dimension of the manifold.
    \label{lemma-morse}
\end{lemma}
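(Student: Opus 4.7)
The plan is to reproduce the dimension count of Fukaya--Oh \cite{fukaya1997zero}. The first step is to restrict to the top-dimensional stratum, where every internal vertex of $T$ is trivalent; trees with a higher-valence internal vertex arise as boundary strata obtained by collapsing an internal edge and therefore contribute only to lower-dimensional strata. For a trivalent ribbon tree with $k$ external vertices, combining the Euler relation $|V|-|E|=1$ with the degree sum $3|V_{int}|=2|E_{int}|+k$ yields $|V_{int}|=k-2$ and $|E_{int}|=k-3$.

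Next I would describe $\mathcal{M}_g$ as a cut-out subset of the ambient parameter space
\[
\prod_{v\in V_{int}} M \;\times\; \prod_{e\in E_{int}} (0,\infty),
\]
of total dimension $n(k-2)+(k-3)$. The defining constraints are: (i) for each internal edge $e$ from $v_1$ to $v_2$, the flow equation $I(v_2)=\phi_{t(e)}(I(v_1))$ with $\phi_t$ the time-$t$ gradient flow of $-(f_{l(e)}-f_{r(e)})$, contributing $n$ equations per internal edge; (ii) for each incoming external edge at $p_i$, the boundary condition $I(v_i')\in W^u(p_i)$, of codimension $n-\mathrm{ind}(p_i)$; and (iii) for the outgoing external edge at $p_0$, the condition $I(v_0')\in W^s(p_0)$, of codimension $\mathrm{ind}(p_0)$.

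The essential analytic input is transversality: for a generic choice of $\boldsymbol{f}$, the Morse--Smale condition holds simultaneously for every difference $f_i-f_{i+1}$, and the induced evaluation maps at the internal vertices are transverse to the relevant diagonals in products of $M$. This is the main obstacle; it is where I would invoke the genericity result of \cite{fukaya1997zero}, or alternatively the perturbative argument of Iacovino \cite{iacovino2008simple}. The delicacy is that a single choice of $\boldsymbol{f}$ must realize all of these transversality statements at once, across every trivalent combinatorial type $T$.

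Granting transversality, a direct count gives
\[
\dim\mathcal{M}_g = n(k-2)+(k-3)-n(k-3)-\sum_{i=1}^{k-1}(n-\mathrm{ind}(p_i))-\mathrm{ind}(p_0),
\]
which simplifies to $\sum_{i=1}^{k-1}\mathrm{ind}(p_i)-\mathrm{ind}(p_0)-(k-2)n+(k-3)$. Adopting the standard convention $\mathrm{ind}(v_0):=n-\mathrm{ind}(p_0)$ at the outgoing external vertex (reflecting that its constraint comes from the stable rather than the unstable manifold), this equals $\sum_{v\in V_{ext}}\mathrm{ind}(v)-(k-1)n+(k-3)$, as claimed.
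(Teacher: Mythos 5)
The paper does not prove this lemma; it is quoted directly from Fukaya--Oh \cite{fukaya1997zero}, so there is no in-paper argument to compare against. Your reconstruction is the standard (and correct) dimension count: the reduction to trivalent trees, the combinatorial identities $|V_{int}|=k-2$, $|E_{int}|=k-3$, the codimension bookkeeping for the flow equations and the (un)stable-manifold conditions, and the arithmetic all check out, and your convention $\mathrm{ind}(v_0)=n-\mathrm{ind}(p_0)$ at the root is exactly the ``all boundary vertices viewed as sources'' convention the paper uses when it applies the lemma in Sections 3 and 4. You correctly isolate the simultaneous transversality statement as the genuine analytic content and defer it to \cite{fukaya1997zero}/\cite{iacovino2008simple}, which is appropriate here.
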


Next we define the moduli space of pseudoholomorphic disks bounding exact Lagrangian submanifolds in $T^*M$. Let $(M,g)$ be a Riemannian manifold.
Let $\boldsymbol{f}=(f_0,\dots,f_{k-1})$ be a generic collection of functions on $M$, and let $\boldsymbol{\Gamma}=(\Gamma_{df_0},\dots,\Gamma_{df_{k-1}})$ be the graphs of their differentials.
We associate each critical point $p_i$ of $f_{i}-f_{i+1}$ with $x_i=(p_i,\epsilon df_i(p_i))\in\epsilon\Gamma_{df_{i+1}}\cap\epsilon\Gamma_{df_i}$ where $\epsilon>0$ is small.

We then fix a canonical almost complex structure $J_g$ on $T^*M$ associated to the metric $g$ on $M$ such that
\begin{enumerate}[label=(\arabic*)]
    \item $J_g$ is compatible with the canonical symplectic form $\omega$ on $T^*M$.
    \item $J_g$ maps vertical tangent vectors to horizontal tangent vectors of $T^*M$ with respect to $g$.
    \item On the zero section of $T^*M$, for $v\in T_q M\subset T_{(q,0)}(T^*M)$, let $J_g(v)=g(v,\cdot)\in T_q^*M\subset T_{(q,0)}(T^*M)$.
\end{enumerate}

\begin{definition}
Let $\mathcal{M}_{J_g}(T^*M;\epsilon\boldsymbol{\Gamma},\boldsymbol{x})$ consist of pairs $(u,D^2_{\boldsymbol{z}})$ where $D^2_{\boldsymbol{z}}$ is the domain $D^2$ with marked points $\boldsymbol{z}=(z_0,\dots,z_{k-1})$ arranged in counterclockwise order on $\partial D^2$ and $u\colon D^2\backslash\{z_0,\dots,z_{k-1}\}\to T^*M$ satisfies
\begin{enumerate}[label=(\arabic*)]
    \item $u(z_i)=p_i$,
    \item $u(\partial_i D^2)\subset\epsilon\Gamma_{df_i}$,
    \item $J_g\circ du=du\circ J_g$,
\end{enumerate}
where $\partial_i D^2$ is the shortest counterclockwise arc between $z_i$ and $z_{i+1}$ on $\partial D^2$. Here we are identifying pairs $(u,D^2_{\boldsymbol{z}})$ and $(v,D^2_{\boldsymbol{z}'})$ which are related by an isomorphism of the domain.
\end{definition}

The following theorem relates Morse gradient trees on $M$ to pseudoholomorphic disks on $T^*M$:
\begin{theorem}[\cite{fukaya1997zero}]
    For $\epsilon>0$ sufficiently small, there is an oriented diffeomorphism $\mathcal{M}_g(M;\boldsymbol{f},\boldsymbol{p})\cong\mathcal{M}_{J_g}(T^*M;\epsilon\boldsymbol{\Gamma},\boldsymbol{x})$.
    \label{Fukaya-Oh}
\end{theorem}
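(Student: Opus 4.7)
The plan is to prove the diffeomorphism via an adiabatic (small-$\epsilon$) limit argument, following the strategy of \cite{fukaya1997zero} (or alternatively the Floer-equation perturbation approach of \cite{iacovino2008simple}), constructing mutually inverse maps between the two moduli spaces for $\epsilon$ sufficiently small.

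For the forward map $\mathcal{M}_{J_g}\to\mathcal{M}_g$, I would start with the symplectic energy bound. For any $u\in\mathcal{M}_{J_g}(T^*M;\epsilon\boldsymbol{\Gamma},\boldsymbol{x})$, Stokes' theorem gives $\int_{D^2}u^*\omega = \epsilon\cdot A(\boldsymbol{f},\boldsymbol{p})$ for a fixed combinatorial quantity $A$; since $J_g$ is $\omega$-compatible, this forces $\|du\|_{L^2}^2=O(\epsilon)$. Taking a sequence $\epsilon_n\to 0$ and applying Gromov-type compactness (adapted to the fact that the boundary Lagrangians also converge to the zero section), the images $u_n(D^2)$ collapse onto a $1$-complex in $M$. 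The defining properties of $J_g$ (interchange of vertical and horizontal, compatibility with $g$) imply that along each edge of this complex the projection $\pi\circ u_n$ satisfies, in the limit, the gradient flow equation for the corresponding $f_i-f_{i+1}$ — this is the non-linear Floer-to-Morse correspondence of \cite{floer1988morse} transported through the stretching. The cyclic ordering of boundary Lagrangians on $\partial D^2$ then produces the ribbon structure and cyclic labels on the limiting tree.

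For the inverse gluing direction, given $(T,I)\in\mathcal{M}_g(M;\boldsymbol{f},\boldsymbol{p})$, I would assemble an approximate $J_g$-holomorphic map $u_{\mathrm{app}}$: on each edge of length $t(e)$ identify a long thin holomorphic strip with the gradient segment via the local Floer correspondence; near each internal vertex $v$ insert a small $J_g$-holomorphic $k_v$-gon from an explicit local model on $T^*\mathbb{R}^n$; and cap external edges at their critical points by the standard exponentially-decaying strip. The $\overline{\partial}_{J_g}$-error of $u_{\mathrm{app}}$ is then $O(\epsilon^\alpha)$ for some $\alpha>0$ in an exponentially-weighted Sobolev norm, and a quantitative implicit function theorem perturbs $u_{\mathrm{app}}$ to an honest element $u\in\mathcal{M}_{J_g}$.

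The main obstacle is the quantitative linear analysis behind this implicit function theorem: one must produce a right-inverse to $D\overline{\partial}_{J_g}|_{u_{\mathrm{app}}}$ whose norm is uniformly bounded as $\epsilon\to 0$, in suitable exponentially-weighted Sobolev spaces tailored to the neck-stretching decomposition. Surjectivity of the linearization is exactly the transversality of the gradient tree, which is guaranteed by the genericity of $\boldsymbol{f}$; the uniform bound, however, requires a careful block decomposition of $D\overline{\partial}$ matching the tree decomposition, together with matching conditions across the gluing regions. Once this is established, the uniqueness clause in the implicit function theorem shows that the two constructions are mutual inverses, and smoothness in the parameters (critical point positions, internal edge lengths, domain moduli) follows from standard Banach manifold arguments. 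Finally, the \emph{oriented} diffeomorphism claim reduces to identifying the determinant line bundle of $D\overline{\partial}$ with the Morse-theoretic orientation of $\mathcal{M}_g$; both are canonically isomorphic to the tensor product over external vertices $v_i$ of the determinant lines of the Hessians of $f_i-f_{i+1}$ at $p_i$, and the gluing construction carries one identification to the other.
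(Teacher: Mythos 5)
You should be aware that the paper does not prove this statement at all: it is quoted verbatim from \cite{fukaya1997zero} (with \cite{iacovino2008simple} offered as an alternative route), and the author's only commentary is the one-line gloss that one glues a pseudoholomorphic curve near $I(T)$ for each gradient tree. So there is no in-paper argument to compare against; what you have written is an outline of the proof in the cited reference itself, and as such it correctly identifies the two halves of the adiabatic argument (degeneration $\mathcal{M}_{J_g}\to\mathcal{M}_g$ as $\epsilon\to 0$, and gluing with a uniformly bounded right inverse in exponentially weighted norms for the converse), as well as the role of transversality of $\boldsymbol{f}$ and the determinant-line comparison for orientations.

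Two cautions if you intend to flesh this out rather than cite it. First, in the compactness direction the energy bound $\int u^*\omega=O(\epsilon)$ and hence $\|du\|_{L^2}^2=O(\epsilon)$ does not by itself force collapse onto a gradient tree: the conformal structure of the punctured disk must degenerate along with $\epsilon$ (long strips forming over the edges, bounded pieces over the vertices), and one needs a pointwise $C^0$-estimate (distance from the zero section is $O(\epsilon)$) plus a rescaling analysis on each strip region to extract the gradient-flow equation; this is the genuinely delicate part of \cite{fukaya1997zero} and is where the later literature spent most of its effort. Second, the statement is a diffeomorphism of moduli spaces, so besides injectivity of the gluing map you must show it is surjective for small $\epsilon$, i.e.\ that every element of $\mathcal{M}_{J_g}(T^*M;\epsilon\boldsymbol{\Gamma},\boldsymbol{x})$ lies in the image; this is exactly where the compactness half and the uniqueness clause of the implicit function theorem have to be run together, and your proposal states this only implicitly. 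With those caveats, the outline is the standard and correct one.
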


Roughly speaking, for each gradient tree $(T,I)$, we can construct a pseudoholomorphic curve near $I(T)$ inside $T^*M$ by some gluing techniques. 
We should note that the theorem depends on a specific choice of almost complex structure $J_g$. 

\begin{figure}[ht]
    \centering
    \includegraphics[width=13cm]{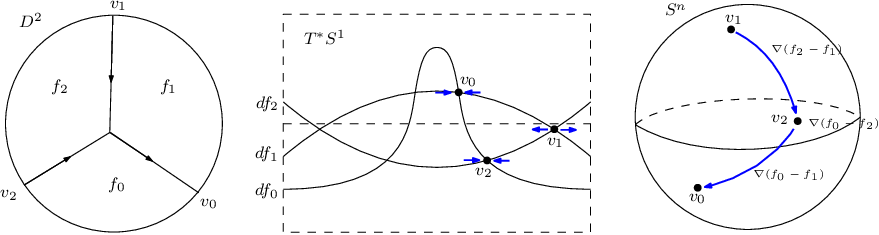}
    \caption{The gradient tree $T$ viewed inside $D^2$ (left), the perturbation of Lagrangians in the special case of $T^*S^1$ (middle) and the image of $T$ on $S^n$ (right). We abuse notation and label both the domain and image of a vertex by $v_i$ since there is no ambiguity.}
    \label{tree-example}
\end{figure}

The following example counts pseudoholomorphic triangles with 3 Lagrangian boundaries, which we will meet again in Section \ref{section-arc}.
\begin{example}
    \label{ex-tree}
    We give an example of a gradient tree with 3 vertices here. Consider functions $f_0,f_1,f_2$ on $S^n$ so that the domain $D^2$ looks like the left side of Figure \ref{tree-example}. We can perturb $f_0,f_1,f_2$ generically so that $v_1$ is a source (top generator) of $\nabla(f_2-f_1)$, $v_2$ is a sink (bottom generator) of $\nabla(f_0-f_2)$ and $v_0$ is a sink (bottom generator) of $\nabla(f_0-f_1)$. The middle of Figure \ref{tree-example} shows a possible perturbation when $n=1$ for illustration. Sources and sinks are denoted by blue arrows. The right side of Figure \ref{tree-example} shows what happens on $S^n$: $v_1$ is the unique top generator of $f_2-f_1$ on $S^n$, so the flows from $v_1$ form a $S^{n-1}$-family and pass through all the points of $S^n$ except two critical points. $v_2$ is the bottom generator of $f_0-f_2$, so the flow from $v_2$ of $\nabla(f_0-f_2)$ is of length 0. Therefore the flow from $v_1$ should pass $v_2$ and there is a unique such flow line. Similarly, there is a unique flow line from $v_2$ to $v_0$ of $\nabla(f_0-f_1)$. To conclude, there is a unique gradient tree. Thus there is a unique pseudoholomorphic disk bounded by the Lagrangians $\epsilon\Gamma_{df_0},\epsilon\Gamma_{df_1},\epsilon\Gamma_{df_2}$ for small $\epsilon$ by Theorem \ref{Fukaya-Oh}.
\end{example}

\section{Invariance under arc slides}
\label{section-arc}
Given $\{\widetilde{\gamma}_1,\dots,\widetilde{\gamma}_\kappa\}$ from Section \ref{subsection-definition}, consider the arc slide of $\widetilde{\gamma}_1$ over $\widetilde{\gamma}_2$: Let $\{\widetilde{\gamma}'_1,\dots,\widetilde{\gamma}'_\kappa\}$ be the new set of arcs as in Figure \ref{arc-slide}. Let $\widetilde{\boldsymbol{a}}'=\{\widetilde{a}'_1,\dots,\widetilde{a}'_\kappa\}$ be the new tuple of Lagrangians over $\{\widetilde{\gamma}'_1,\dots,\widetilde{\gamma}'_\kappa\}$. For $i=1,\dots,\kappa$, let $\Theta_i$ (resp. $\Xi_i$) be the intersection point of $\widetilde{a}_i$ and $\widetilde{a}'_i$ that lies over $z_i$ (resp. $z_{i+\kappa}$). Denote $\boldsymbol{\Theta}=\{\Theta_1,\dots,\Theta_\kappa\}$ and $\boldsymbol{\Xi}=\{\Xi_1,\dots,\Xi_\kappa\}$.

\begin{figure}[ht]
    \centering
    \includegraphics[height=4cm]{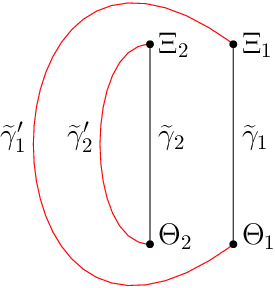}
    \caption{Arc sliding of $\widetilde{\gamma}_1$ over $\widetilde{\gamma}_2$.}
    \label{arc-slide}
\end{figure}

The purpose of this section is to prove:
\begin{theorem}
    $\widehat{CF}(\widetilde{W},\widetilde{h}_\sigma(\widetilde{\boldsymbol{a}}),\widetilde{\boldsymbol{a}})$ and $\widehat{CF}(\widetilde{W},\widetilde{h}_\sigma(\widetilde{\boldsymbol{a}}'),\widetilde{\boldsymbol{a}}')$ are quasi-isomorphic.
\end{theorem}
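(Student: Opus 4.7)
The plan is to mimic the classical Heegaard Floer handle-slide argument: build a chain map between the two complexes by counting pseudoholomorphic triangles through a distinguished top generator, and then promote it to a quasi-isomorphism via a quadrilateral chain homotopy.

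I would factor the comparison through the intermediate complex $\widehat{CF}(\widetilde{W},\widetilde{h}_\sigma(\widetilde{\boldsymbol{a}}),\widetilde{\boldsymbol{a}}')$ and define a triangle map
\[
  \Phi_{\boldsymbol{\Theta}}\colon \widehat{CF}(\widetilde{W},\widetilde{h}_\sigma(\widetilde{\boldsymbol{a}}),\widetilde{\boldsymbol{a}}) \longrightarrow \widehat{CF}(\widetilde{W},\widetilde{h}_\sigma(\widetilde{\boldsymbol{a}}),\widetilde{\boldsymbol{a}}')
\]
by counting rigid pseudoholomorphic curves with three boundary components on $\widetilde{h}_\sigma(\widetilde{\boldsymbol{a}})$, $\widetilde{\boldsymbol{a}}$, $\widetilde{\boldsymbol{a}}'$, with the distinguished input $\boldsymbol{\Theta}\in CF(\widetilde{\boldsymbol{a}},\widetilde{\boldsymbol{a}}')$; applying $\widetilde{h}_\sigma$ to the whole picture yields the analogous triangle map from the intermediate complex to $\widehat{CF}(\widetilde{W},\widetilde{h}_\sigma(\widetilde{\boldsymbol{a}}'),\widetilde{\boldsymbol{a}}')$, with distinguished input $\widetilde{h}_\sigma(\boldsymbol{\Theta})$. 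The composition is the candidate quasi-isomorphism. The chain-map identity in each factor follows from the usual boundary analysis of the index-$1$ triangle moduli space: the only possibly non-trivial strata are strip-breakings at the three inputs, and the contribution at the $\boldsymbol{\Theta}$-input vanishes because $\boldsymbol{\Theta}$ is a top intersection cycle carrying no rigid outgoing differential.

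To upgrade each factor to a quasi-isomorphism, I would introduce a backward triangle map $\Psi_{\boldsymbol{\Xi}}$ using the complementary top cycle $\boldsymbol{\Xi}$, and verify that $\Psi_{\boldsymbol{\Xi}}\circ\Phi_{\boldsymbol{\Theta}}$ is chain-homotopic to the identity. The homotopy is built from a count of pseudoholomorphic quadrilaterals with Lagrangian boundary sequence $\widetilde{h}_\sigma(\widetilde{\boldsymbol{a}})$, $\widetilde{\boldsymbol{a}}$, $\widetilde{\boldsymbol{a}}'$, $\widetilde{\boldsymbol{a}}$, two of whose consecutive inputs are fixed to $\boldsymbol{\Theta}$ and $\boldsymbol{\Xi}$; the $A_\infty$-associativity encoded by the index-$1$ quadrilateral boundary reduces the homotopy identity to the assertion that the triangle product $m_2(\boldsymbol{\Theta},\boldsymbol{\Xi})$ represents the cohomological unit in $HF(\widetilde{\boldsymbol{a}},\widetilde{\boldsymbol{a}})$. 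The opposite composition is treated symmetrically.

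The technical heart of the argument, and what I expect to be the main obstacle, is the explicit evaluation of these triangle and quadrilateral counts. Following the hint in the introduction I would stretch the neck along a small loop in the base $D$ enclosing the region bounded by $\widetilde{\gamma}_1,\widetilde{\gamma}_1'$ together with a portion of $\widetilde{\gamma}_2$; every rigid curve must then decompose into a local piece projecting into this region and trivial strips over the complement, where the two tuples agree componentwise. Inside the local region the three matching spheres $\widetilde{a}_1,\widetilde{a}_1',\widetilde{a}_2$ are, after a $C^1$-small Hamiltonian perturbation, $C^1$-close to a common Lagrangian $S^n$, so the Lagrangian neighbourhood theorem combined with Theorem~\ref{Fukaya-Oh} turns the curve count into one of Morse gradient trees on $S^n$. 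The trivalent tree that appears is precisely the one computed in Example~\ref{ex-tree}, yielding a unique rigid triangle; the analogous four-valent reduction produces the unique rigid quadrilateral required for the homotopy. Throughout I would use Lemma~\ref{lemma-index} to check that all relevant configurations sit in Fredholm index zero, which is precisely where the dimension hypothesis $n=2$ or $n>3$ enters and forces the exclusion of $n=3$.
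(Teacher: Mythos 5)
Your overall skeleton---factoring through the intermediate complex $\widehat{CF}(\widetilde{W},\widetilde{h}_\sigma(\widetilde{\boldsymbol{a}}),\widetilde{\boldsymbol{a}}')$, defining the two maps as $\mu_2$-products with the distinguished cycles $\boldsymbol{\Theta}$ and $\boldsymbol{\Xi}$, and reducing $\Psi\circ\Phi\simeq\mathrm{id}$ to a degeneration/quadrilateral count---coincides with the paper's. The gap is in your final reduction, which is also the technical heart of the whole section. You claim that after stretching the neck the three spheres $\widetilde{a}_1,\widetilde{a}_1',\widetilde{a}_2$ become $C^1$-close to a common Lagrangian $S^n$, so that Theorem~\ref{Fukaya-Oh} converts the count into the gradient-tree count of Example~\ref{ex-tree}. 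This fails: $\widetilde{a}_1$ and $\widetilde{a}_1'$ are matching cycles over arcs bounding a \emph{fat} region of the base that contains the two Lefschetz critical values at the ends of $\widetilde{\gamma}_2$, so they are not graphical over one another in any Weinstein neighbourhood---this is precisely the point made in the introduction that, unlike in the nilpotent-slice model, arc-slid Lagrangians are not visibly Hamiltonian isotopic in the cylindrical formulation. Moreover, the rigid configurations controlling $\Psi\circ\Phi$ are not triangles or quadrilaterals on a single sphere: for $\kappa=2$ they are curves of Euler characteristic $\chi=0$ from $\boldsymbol{\Xi}$ to $\boldsymbol{\Theta}$ passing through a generic point constraint $\boldsymbol{w}$, whose projection to the base has degree $2$ over the thin strip between $\widetilde{\gamma}_2,\widetilde{\gamma}_2'$. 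One must classify interior branch points versus boundary switch points, impose the involution condition coming from the deck transformation of the branched double cover, and compute evaluation maps into the Morse--Bott $S^{n-1}$-families of Reeb chords. That is the content of Theorems~\ref{theorem-half} and~\ref{theorem-full} and their explicit model calculations of holomorphic disks in $\mathbb{C}^n$; the Fukaya--Oh tree argument enters there only for localized pieces in a single fiber $T^*S^{n-1}$ after degeneration, not for the global count.

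A secondary issue: you dispose of the chain-map property by asserting that $\boldsymbol{\Theta}$ is a top intersection cycle ``carrying no rigid outgoing differential.'' In this theory that is not automatic, because the differential counts curves of every Euler characteristic $\chi\le\kappa$ weighted by $\hbar^{\kappa-\chi}$, and the component from $\Theta_1$ to $\Xi_1$ projects onto the fat region containing two critical values, so its Fredholm index acquires Lagrangian-surgery corrections of $n-2$. Lemma~\ref{lemma-cycle} shows that the index-$1$ condition has no integer solution in $\chi$ exactly when $n=2$ or $n>3$; this cocycle lemma, rather than an index check on the quadrilaterals, is where $n=3$ is excluded, and it is a computation you would still need to supply.
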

\begin{proof}
    It suffices to show the quasi-isomorphism between $\widehat{CF}(\widetilde{W},\widetilde{h}_\sigma(\widetilde{\boldsymbol{a}}),\widetilde{\boldsymbol{a}})$ and $\widehat{CF}(\widetilde{W},\widetilde{h}_\sigma(\widetilde{\boldsymbol{a}}),\widetilde{\boldsymbol{a}}')$.
    One can show the quasi-isomorphism between $\widehat{CF}(\widetilde{W},\widetilde{h}_\sigma(\widetilde{\boldsymbol{a}}),\widetilde{\boldsymbol{a}}')$ and $\widehat{CF}(\widetilde{W},\widetilde{h}_\sigma(\widetilde{\boldsymbol{a}}'),\widetilde{\boldsymbol{a}}')$ similarly.
    
    We define the cochain map by the $\mu_2$ composition map of the $A_\infty$-relation:
    \begin{equation*}
        \Phi\colon\widehat{CF}(\widetilde{W},\widetilde{h}_\sigma(\widetilde{\boldsymbol{a}}),\widetilde{\boldsymbol{a}})\to\widehat{CF}(\widetilde{W},\widetilde{h}_\sigma(\widetilde{\boldsymbol{a}}),\widetilde{\boldsymbol{a}}'),
    \end{equation*}
    \begin{equation*}
        \mathrm{\boldsymbol{y}}\mapsto\mu_2(\boldsymbol{\Xi}\otimes\mathrm{\boldsymbol{y}}),
    \end{equation*}
    \noindent
    where $\mu_2$ is the product map
    \begin{equation*}
        \mu_2\colon\widehat{CF}(\widetilde{W},\widetilde{\boldsymbol{a}},\widetilde{\boldsymbol{a}}')\otimes\widehat{CF}(\widetilde{W},\widetilde{h}_\sigma(\widetilde{\boldsymbol{a}}),\widetilde{\boldsymbol{a}})\to\widehat{CF}(\widetilde{W},\widetilde{h}_\sigma(\widetilde{\boldsymbol{a}}),\widetilde{\boldsymbol{a}}').
    \end{equation*}
    \noindent
    Similarly we define the cochain map going back by
    \begin{equation*}
        \Psi\colon\widehat{CF}(\widetilde{W},\widetilde{h}_\sigma(\widetilde{\boldsymbol{a}}),\widetilde{\boldsymbol{a}}')\to\widehat{CF}(\widetilde{W},\widetilde{h}_\sigma(\widetilde{\boldsymbol{a}}),\widetilde{\boldsymbol{a}}),
    \end{equation*}
    \begin{equation*}
        \mathrm{\boldsymbol{y}}\mapsto\mu'_2(\boldsymbol{\Theta}\otimes\mathrm{\boldsymbol{y}}).
    \end{equation*}
    
    The following lemma justifies that $\Phi$, $\Psi$ are well-defined:
    \begin{lemma}
        For $n=2$ or $n>3$, $\boldsymbol{\Xi}$ is a cocycle in $\widehat{CF}(\widetilde{W},\widetilde{\boldsymbol{a}},\widetilde{\boldsymbol{a}}')$ and $\boldsymbol{\Theta}$ is a cocycle in $\widehat{CF}(\widetilde{W},\widetilde{\boldsymbol{a}}',\widetilde{\boldsymbol{a}})$.
        \label{lemma-cycle}
    \end{lemma}
        
    \noindent
    \textit{Proof of Lemma \ref{lemma-cycle}}: For simplicity assume $\kappa=2$. By Lemma \ref{lemma-index} and a Maslov index calculation,
    \begin{align}
        \mathrm{ind}&(u;\boldsymbol{\Theta},\boldsymbol{\Xi})=(n-2)(\chi-2)+4n-4,
        \label{ind-formula-sum}\\
        \mathrm{ind}&(u;\boldsymbol{\Theta},\{\Xi_2,\Theta_1\})=(n-2)(\chi-2)+n,
        \label{ind-formula-trivial}\\
        \mathrm{ind}&(u;\boldsymbol{\Theta},\{\Xi_1,\Theta_2\})=(n-2)(\chi-2)+3n-4,
        \label{ind-formula-nontrivial}
    \end{align}
    \noindent
    where for example, $\mathrm{ind}(u;\boldsymbol{\Theta},\boldsymbol{\Xi})$ denotes the Fredholm index of curves $u\colon\dot{F}\to\mathbb{R}\times[0,1]\times\widetilde{W}$ such that
    
    \begin{enumerate}
        \item $du\circ J^\lozenge=J^\lozenge\circ du$,
        \item $u(\partial\dot F)\subset\mathbb{R}\times((\{1\}\times\widetilde{\boldsymbol{a}})\cup(\{0\}\times\widetilde{\boldsymbol{a}}'))$,
        \item As $\pi_\mathbb{R}\circ u$ tends to $+\infty$ (resp. $-\infty$), $\pi_{\widetilde{W}}$ tends to $\boldsymbol{\Theta}$ (resp. $\boldsymbol{\Xi}$), where $\pi_{\mathbb{R}}$ and $\pi_{\widetilde{W}}$ are the projections of $u$ to $\mathbb{R}$ and $\widetilde{W}$.
    \end{enumerate}
    
    We explain (\ref{ind-formula-sum})-(\ref{ind-formula-nontrivial}) now. 
    First, the Maslov index of a closed path over $\widetilde{\gamma}_2'$ and $\widetilde{\gamma}_2$ is $n$ by definition, which implies (\ref{ind-formula-trivial}). 
    To get (\ref{ind-formula-nontrivial}), observe that the projection to $\widetilde{D}$ of the curve from $\Theta_1$ to $\Xi_1$ is over the region surrounded by $\widetilde{\gamma}_1'$ and $\widetilde{\gamma}_1$, which contains two critical values of the Lefschetz fibration. 
    The region surrounded by $\widetilde{\gamma}_1'$ and $\widetilde{\gamma}_1$ can be viewed as the outcome after applying Lagrangian surgery twice on a trivial strip to incorporate the two critical values. 
    By Theorem 55.5 of \cite{fukaya2010lagrangian}, each Lagrangian surgery increases the Fredholm index by $n-2$. Therefore, $\mathrm{ind}(u;\boldsymbol{\Theta},\{\Xi_1,\Theta_2\})=(n-2)(\chi-2)+3n-4$ by comparing with (\ref{ind-formula-trivial}). Finally, (\ref{ind-formula-sum}) follows by adding the Maslov index terms of (\ref{ind-formula-trivial}) and (\ref{ind-formula-nontrivial}). 
    
    For $n=2$, the Fredholm index does not depend on $\chi$: $\mathrm{ind}(u;\boldsymbol{\Theta},\boldsymbol{\Xi})=4$ and $\mathrm{ind}(u;\boldsymbol{\Theta},\{\Xi_2,\Theta_1\})=\mathrm{ind}(u;\boldsymbol{\Theta},\{\Xi_1,\Theta_2\})=2$. 
    
    For $n>3$, observe that in all 3 cases the domain $F$ has 2 punctures and thus $\chi$ is even. 
    In particular, $\mathrm{ind}(u;\boldsymbol{\Theta},\boldsymbol{\Xi})$ is even and $\mathrm{ind}(u;\boldsymbol{\Theta},\boldsymbol{\Xi})\neq1$; If $\mathrm{ind}(u;\boldsymbol{\Theta},\{\Xi_1,\Theta_2\})=1$, then $\chi=2-\frac{n-1}{n-2}$, which is not a integer for $n>3$; If $\mathrm{ind}(u;\boldsymbol{\Theta},\{\Xi_2,\Theta_1\})=1$, then $\chi=-1-\frac{1}{n-2}$, which is also not a integer for $n>3$. 
    
    Therefore, for $n=2$ or $n>3$, there is no index 1 curve from $\boldsymbol{\Theta}$, i.e., $\boldsymbol{\Theta}$ is a cocycle in $\widehat{CF}(\widetilde{W},\widetilde{\boldsymbol{a}}',\widetilde{\boldsymbol{a}})$. The case of $\boldsymbol{\Xi}$ is similar.
    \qed
    
    \begin{remark}
        Lemma \ref{lemma-cycle} does not deal with the case of $n=3$, where the index could be 1 for some $\chi$. Therefore, to show $\boldsymbol{\Theta}$ is a cocycle, we still need to understand the index 1 pseudoholomorphic curves from $\Theta_2$ to $\Xi_2$, whose projection to $\widetilde{D}$ is the thin strip surrounded by $\widetilde{\gamma}'_2$ and $\widetilde{\gamma}_2$, and also the curves from $\Theta_1$ to $\Xi_1$, whose projection to $\widetilde{D}$ is the fat strip surrounded by $\widetilde{\gamma}'_1$ and $\widetilde{\gamma}_1$. 
    \end{remark}
    
    It remains to show that $\Psi\circ\Phi$ induces identity on cohomology (with some nonzero coefficient). The composition of $\Psi$ and $\Phi$ is given by the left-hand side of Figure \ref{degenerate}, which is viewed as a degeneration of a family of curves. 
    The right-hand side of Figure \ref{degenerate} shows another degeneration. Observe that the right-hand part of the right degeneration is of index 0. 
    In fact the right degeneration corresponds to the identity map times the count of its left-hand part, which is chain homotopic to $\Psi\circ\Phi$.
    
    \begin{figure}[ht]
        \centering
        \includegraphics[height=4cm]{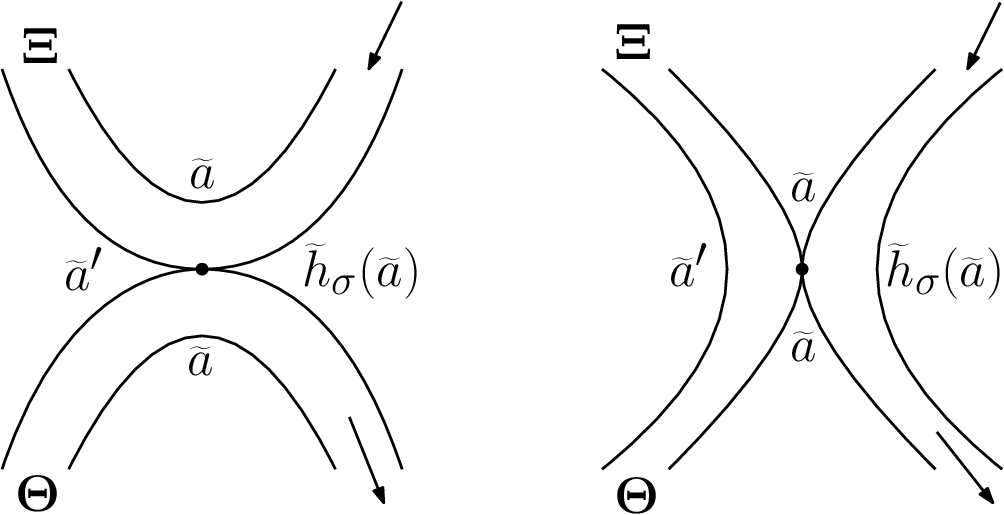}
        \caption{Two possible degenerations.}
        \label{degenerate}
    \end{figure}
    
    Now we count the left-hand part of the right degeneration of Figure \ref{degenerate}. For convenience, assume $\kappa=2$, i.e., focus on the arc sliding of $\widetilde{\gamma}_1$ over $\widetilde{\gamma}_2$. 
    The moduli space $\mathcal{M}_{J}(\boldsymbol{\Xi},\boldsymbol{\Theta})$ contains curves $u$ with boundary condition which maps $\partial\dot F$ to $(\mathbb{R}\times\{1\}\times\boldsymbol{\widetilde{a}'})\cup(\mathbb{R}\times\{0\}\times\boldsymbol{\widetilde{a}})$. 
    There is another restriction on the right degeneration: $u$ passes through $(0,0,w_1)$ and $(0,0,w_2)$, where $(0,0)\in\mathbb{R}\times[0,1]$ and $w_i\in\boldsymbol{\widetilde{a}_i}$. Passing through a generic $\boldsymbol{w}=\{w_1,w_2\}$ is a codimension $2n$ condition. 
    By Lemma \ref{lemma-index}, $\mathrm{ind}(\boldsymbol{\Xi},\boldsymbol{\Theta})=2n$ if and only if $n=2$ or $\chi=0$ for $n>3$.
    
    By Theorem \ref{theorem-full} below, the count of $\mathcal{M}_{J}^{\chi=0}(\boldsymbol{\Xi},\boldsymbol{\Theta})$ passing a generic $\boldsymbol{w}$ is 1 (mod 2). Thus $\Psi\circ\Phi$ is cochain homotopic to identity with some nonzero coefficient. The case of $\Phi\circ\Psi$ is similar.
\end{proof}

\subsection{Half of curve counting}
There is a half version of the counting problem, where the base is shown as Figure \ref{half-base}. The base $D$ can be extended to $\mathbb{C}$ by adding cylindrical ends, denoted by $\bar{D}$. 
Let $\bar{a}_i$ and $\bar{a}_j'$ be the cylindrical completion of $a_i$ and $a_j'$. 
The asymptotic Reeb chords from $\bar{a}_i$ to $\bar{a}_j'$ form a $S^{n-1}$-family. 
We then perturb the contact form so that the $S^{n-1}$-family becomes Morse-Bott and let $\check{c}_{ij},\hat{c}_{ij}$ be the longer and shorter asymptotic Reeb chords. 
Let $\boldsymbol{c}=\{\check{c}_{12},\check{c}_{21}\}$. The problem is to count $\mathcal{M}_{J^{\lozenge}}^{\chi=1,\boldsymbol{w}}(\boldsymbol{c},\boldsymbol{\Theta})$, which is over the cylindrical extension of the region surrounded by $\gamma_1,\gamma_2,\gamma_1',\gamma_2'$. 

\begin{figure}[ht]
    \centering
    \includegraphics[height=4cm]{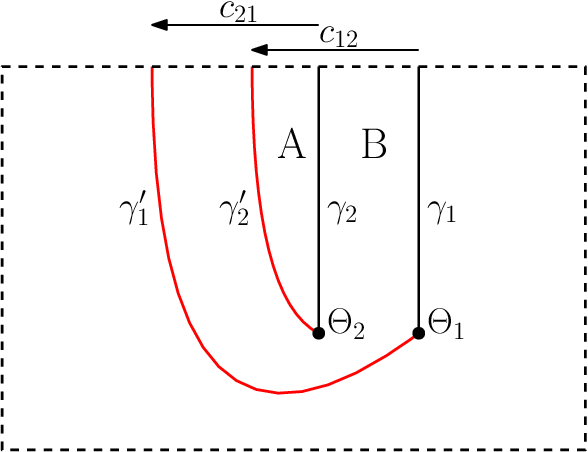}
    \caption{Half of the base. We count pseudoholomorphic disks surrounded by $\gamma_1,\gamma_2,\gamma_1',\gamma_2'$.}
    \label{half-base}
\end{figure}

\begin{theorem}
    $\#\mathcal{M}_{J^{\lozenge}}^{\chi=1,\boldsymbol{w}}(\boldsymbol{c},\boldsymbol{\Theta})=1$ mod $2$ for generic $\boldsymbol{w}$.
    \label{theorem-half}
\end{theorem}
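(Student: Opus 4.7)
The plan is to reduce this count to an explicit Morse gradient tree computation on the Lagrangian spheres $\bar a_1,\bar a_2$ via the Fukaya--Oh correspondence (Theorem \ref{Fukaya-Oh}), in the same spirit as Example \ref{ex-tree}.

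First, I would analyze the base projection of any $u\in\mathcal{M}_{J^{\lozenge}}^{\chi=1,\boldsymbol{w}}(\boldsymbol{c},\boldsymbol{\Theta})$. Since $J^\lozenge$ is a small perturbation of a split almost complex structure, $\pi_{\bar D}\circ u$ is (up to small error) a holomorphic map from a four-punctured disk into $\bar D$, with boundary on $\gamma_1\cup\gamma_2\cup\gamma_1'\cup\gamma_2'$ and four ends asymptotic to $z_1,z_2$ below and to the Reeb chords $\check c_{12},\check c_{21}$ at the cylindrical end above. Together with positivity of intersections with vertical fibers and $\chi=1$, this forces $\pi_{\bar D}\circ u$ to be a degree-one, unbranched covering of the quadrilateral region $R$ bounded by the four arcs in Figure \ref{half-base}.

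Second, I would arrange $\bar a_1',\bar a_2'$ to be $C^1$-close Hamiltonian perturbations of $\bar a_1,\bar a_2$ supported near $R$, i.e.\ realize the arc slide via a small exact Lagrangian isotopy. After this, on a Weinstein neighborhood of $\bar a_i$ the Lagrangian $\bar a_i'$ is expressed as a graph $\Gamma_{df_i}$ with $f_i:\bar a_i\cong S^n\to\mathbb R$ Morse and $\Theta_i$ the minimum of $f_i$. A parallel stretching at the cylindrical end makes $\check c_{ij}$ correspond to a top generator in the $S^{n-1}$-fiber Morse theory (the long Reeb chord). Theorem \ref{Fukaya-Oh} then converts counting the central piece of $u$ into counting Morse gradient trees on the sphere data. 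Third, I would enumerate admissible trees with these asymptotics subject to the point constraints $w_i\in\bar a_i$ (each of codimension $n$): since each $\check c_{ij}$ is a top-generator source emitting an $(n-1)$-sphere of flow lines while each $\Theta_i$ is a bottom-generator sink absorbing flow lines of length zero, the generic $w_i$ pins down a unique flow line from source through $w_i$ to sink, so there is exactly one admissible gradient tree, mirroring Example \ref{ex-tree}.

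The main obstacle will be to rigorously control the degeneration limit — in particular, to rule out higher multiplicity covers of $R$, sphere and disk bubbles, and boundary breakings involving the short Reeb chords $\hat c_{ij}$. This requires index bookkeeping via Lemma \ref{lemma-index} (noting that $\chi=1$ and the $2n$-codimensional constraint at $\boldsymbol{w}$ leave no index budget for a bubble tree), exactness of all Lagrangians to forbid non-constant spheres, and the action gap between $\check c_{ij}$ and $\hat c_{ij}$ to forbid the bad breaking. Once these are excluded, the unique-tree count yields $\#\mathcal{M}_{J^{\lozenge}}^{\chi=1,\boldsymbol{w}}(\boldsymbol{c},\boldsymbol{\Theta})=1$ mod $2$.
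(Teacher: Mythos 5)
Your second step rests on a claim that is false for this moduli space: the base projection $p\circ v$ is \emph{not} a degree-one unbranched cover of the quadrilateral region. Because the domain has $\chi=1$ and four punctures while the projection of $u$ to $\mathbb{R}\times[0,1]$ is a $\kappa=2$-fold cover, Riemann--Hurwitz forces exactly one interior branch point or a pair of boundary switch points; correspondingly $p\circ v$ has degree $2$ over the overlap region A of Figure \ref{half-base} and degree $1$ over region B. Essentially all of the difficulty of the theorem lives in this branching: one must classify the branching behaviors (Type $int$ versus Type $\partial_1$, $\partial_2$), impose the codimension-one involution (deck transformation) constraint on the positions of $q(\Theta_i)$, $q(\check c_{ij})$, $q(w_i)$ on $\partial\dot F$, and determine which configurations survive. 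A single Morse gradient tree on $\bar a_1\cup\bar a_2$ cannot see any of this.

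The Fukaya--Oh reduction also fails locally where it matters most: the region bounded by $\gamma_1,\gamma_1'$ contains Lefschetz critical values, so near the singular fibers $\bar a_1'$ is not a graph $\Gamma_{df}$ over $\bar a_1$ and $\Theta_i$ is an isolated transverse intersection over a critical value, not a Morse minimum of a globally defined difference function. The paper's actual argument stretches $\boldsymbol{w}$ to extreme positions (using independence of the mod 2 count on $\boldsymbol{w}$), kills the Type $int$ and one boundary-switch configuration because the relevant evaluation map sweeps a nullhomologous $S^{n-2}$ inside the $S^{n-1}$-family of Reeb chords, and then counts the surviving Type $\partial$ configuration by splitting it into $v^{(1)}\cup v^{(2)}\cup v^{(3)}$: only the triangle $v^{(3)}$, which lives in the trivial part of the fibration, is counted by the gradient-tree argument you propose; the piece $v^{(1)}$ containing the critical point requires the explicit model computation of holomorphic disks for $p'(z_1',z_2')=z_1'z_2'$ (Step 3'), showing its evaluation map sweeps exactly half of $S^{n-1}$ for Type $\partial_1$ and the complementary half for Type $\partial_2$. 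Your concerns about bubbles and action gaps are secondary; the missing idea is the branched-cover structure and the resulting case analysis and model calculation.
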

\begin{proof}
    For $u\in\mathcal{M}_{J^{\lozenge}}^{\chi=1,\boldsymbol{w}}(\boldsymbol{c},\boldsymbol{\Theta})$, let $v=\pi_{\bar{W}}\circ u\colon\dot{F}\to \bar{W}$, where $\bar{W}$ is the cylindrical extension of $W$ and $F$ is the unit disk in $\mathbb{C}$.
    
    Recall that $p\colon\bar{W}\to\bar{D}$ is the projection. 
    The map $p\circ v\colon\dot{F}\to \bar{D}$ has degree 2 (resp. 1) over region A (resp. B) of Figure \ref{half-base}, which are connected components of $D-{\gamma}_1\cup{\gamma}'_1\cup{\gamma}_2\cup{\gamma}'_2$ with extension. 
    There are two possible branching behaviors: Type $int$ has a branch point $b$ that maps to the interior of region A; Type $\partial$ is more obscure which has two switch points $b_1,b_2$ on the boundary of region A instead of a branch point. 
    Denote Type $\partial_1$ for the case $b_1,b_2$ on $\gamma_2'$ and Type $\partial_2$ for $b_1,b_2$ on $\gamma_2$. Choose $b_1$ as the point closer to the puncture $c_{12}$ (resp. $c_{21}$) on $\partial \dot{F}$ that maps to $\gamma_2'$ (resp. $\gamma_2$).
    
    There is another codimension-1 constraint. 
    Denote the preimage of $*$ under $u$ as $q(*)$, and we will often leave this notation out if there is no ambiguity.
    A possible arrangement of points on $\partial \dot{F}$ is shown as Figure \ref{unitdisk}.
    Since the projection of $u$ to $\mathbb{R}\times[0,1]$ is a double branched cover and under this projection, $\{q(w_1),q(w_2)\}$, $\{q(\Theta_1),q(\Theta_2)\}$, $\{q(\check{c}_{12}),q(\check{c}_{21})\}$ are mapped to $(0,0),-\infty,+\infty$, respectively, the deck transformation requires the involution:
    \begin{equation}
        q(\Theta_1)\mapsto q(\Theta_2),\,q(\check{c}_{21})\mapsto q(\check{c}_{12}),\,q(w_2)\mapsto q(w_1).
    \label{involution}
    \end{equation}
    
    \begin{figure}
        \centering
        \includegraphics[width=3.5cm]{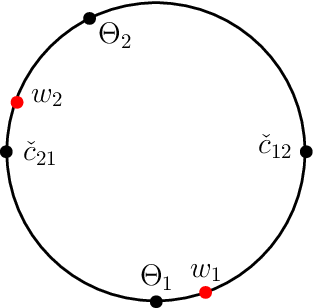}
        \caption{}
        \label{unitdisk}
    \end{figure}
    
    The counting strategy uses the fact that the mod 2 count $\#\mathcal{M}_{J^{\lozenge}}^{\chi=1,\boldsymbol{w}}(\boldsymbol{c},\boldsymbol{\Theta})$ does not depend on $\boldsymbol{w}$, which allows us to stretch the curve by letting $\boldsymbol{w}$ tend to some limit: 
    
    \begin{enumerate}
        \item Let $|p(w_2)|\gg0$; 
        \item Let the width between $\gamma_2,\gamma_2', m\to0$;
        \item Choose $w_1$ such that $p(w_1)\to z_1$.
    \end{enumerate}
    
    \noindent
    $Step$ 1. Denote $\iota=\mathrm{Im}\circ p$. We first observe that as $p(w_1)\to z_1$, either $\iota\circ v(b)\gg \iota(w_2)$ or $\iota\circ v(b_2)\gg \iota(w_2)$. If this is not true, then by Gromov compactness there exists a limiting curve which contradicts the involution condition \ref{involution}. 
    We refer the reader to \cite{colin2020applications} for details.

    \vskip.15in    
    \noindent
    $Step$ 2. Write $\mathcal{M}_{J^{\lozenge}}^{\chi=1,\boldsymbol{w},\sharp}(\boldsymbol{c},\boldsymbol{\Theta})$ for the curves satisfying $\iota\circ v(b)\gg\iota(w_2)$ or $\iota\circ v(b_1)\geq\iota(w_2)-C$. 
    The case of $\iota\circ v(b)\gg\iota(w_2)$ is shown as the left of Figure \ref{half-split}, where the limiting 2-level curve contains $v^{(1)}\cup v^{(2)}\cup v^{(3)}$. The case of $\iota\circ v(b_1)\geq\iota(w_2)-C$ is similar. We push off the bottom singular point a little so that $a_1$ and $a_1'$ have clean $S^{n-1}$-intersection, which does not change the moduli space of $v^{(1)}$ by \cite{fukaya2010lagrangian}.

    \begin{figure}[ht]
        \centering
        \includegraphics[height=6cm]{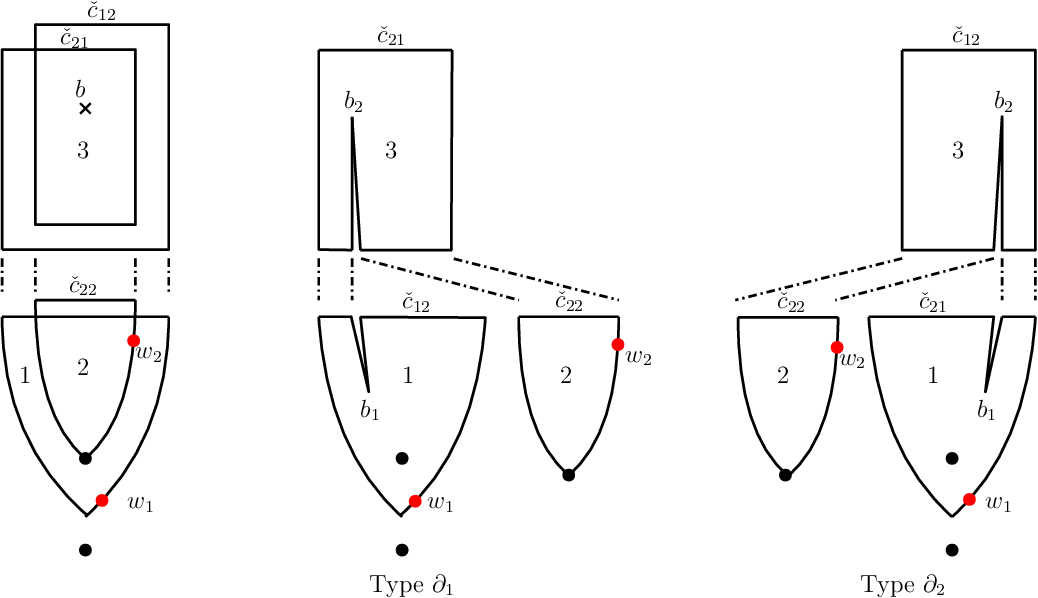}
        \caption{}
        \label{half-split}
    \end{figure}
    
    The region of $p\circ v^{(1)}$ contains a singular point of a Lefschetz fibration, which can be viewed as the outcome after a Lagrangian surgery from the trivial region without singular points. By Theorem 55.5 of \cite{fukaya2010lagrangian}, the moduli space of $v^{(1)}$ is diffeomorphic to $S^{n-2}$. Therefore the evaluation map of $v^{(1)}$ at $\check{c}_{11}$ sweeps $S^{n-2}$ inside the $S^{n-1}$-family of Reeb chords, of which the homology class vanishes. The result is $\#\mathcal{M}_{J^{\lozenge}}^{\chi=1,\boldsymbol{w},\sharp}(\boldsymbol{c},\boldsymbol{\Theta})=0$ mod 2.
    
    \vskip.15in
    \noindent
    $Step$ 3. The remaining case is the curve satisfying $\iota\circ v(b_2)\gg\iota(w_2)$ and $\iota\circ v(b_1)\leq\iota(w_2)-C$. The stretched limiting curves are shown as the middle and the right of Figure \ref{half-split}. We expect that such curves exist and contribute 1 (mod 2) to $\mathcal{M}_{J^{\lozenge}}^{\chi=1,\boldsymbol{w}}(\boldsymbol{c},\boldsymbol{\Theta})$.
    
    First we treat Type $\partial_1$. $v^{(2)}$ is uniquely determined since there is a unique gradient trajectory from $\check{c}_{22}$ to $\Theta_2$ passing through $w_2$. Denote the bottom-left Reeb chord of $v^{(3)}$ by $\hat{d}_{21}$. We show that the pseudoholomorphic triangle $v^{(3)}$ exists uniquely: The Lefschetz fibration around $v^{(3)}$ is the trivial one $p\colon\mathbb{C}\times T^*S^{n-1}\to \mathbb{C}$. 
    We can perturb the Lagrangians $a_1,a_1',a_2,a_2'$ in the fiber direction. The case of $n=2$ is shown as Figure \ref{perturb-fiber}, 
    where there exists a unique pseudoholomorphic triangle.
    
    \begin{figure}[ht]
        \centering
        \includegraphics[height=4cm]{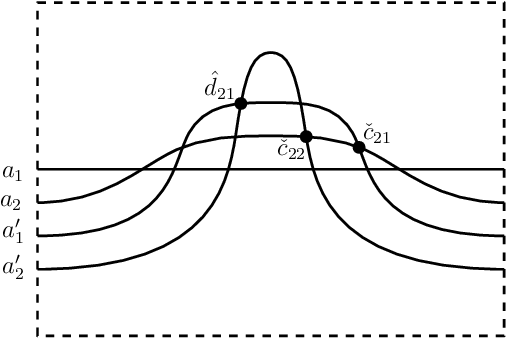}
        \caption{The fiber $T^*S^{1}$ with perturbed Lagrangians. The sides are identified.}
        \label{perturb-fiber}
    \end{figure}
    
    More generally, the case of $n\geq 2$ is done by the gradient tree argument. Put the perturbed Lagrangians as Figure \ref{perturb-fiber} on $T^*S^1\subset T^*S^{n-1}$ and extend to $T^*S^{n-1}$, where we view $S^1\subset S^{n-1}$ as a equator.
    After a further small perturbation, we consider the moduli space of Morse gradient trees with 3 vertices $\check{c}_{22},\check{c}_{21},\hat{d}_{21}$. Viewing all vertices as sources of gradient flow, we check that $\mathrm{ind}(\check{c}_{22})=0$, $\mathrm{ind}(\check{c}_{21})=\mathrm{ind}(\hat{d}_{21})=n-1$. 
    
    By Lemma \ref{lemma-morse}, the dimension of this moduli space is 0 and it contains a unique gradient tree which corresponds to a single pseudoholomorphic triangle by Theorem \ref{Fukaya-Oh}. Note that this is almost the same as Example \ref{ex-tree}.
     
    It remains to consider $v^{(1)}$, which is a pseudoholomorphic disk with a slit and a singular point inside. 
    The singular point can be viewed as coming from a Lagrangian surgery \cite{fukaya2010lagrangian}, which increases the Fredholm index by $n-2$. 
    Now the moduli space of $v^{(1)}$ passing through a generic $w_1$ is of dimension $n-1$.
    Consider the evaluation map $ev_{21}$ of $v^{(1)}$ on its top-left end, where the image lies in a $S^{n-1}$-family of Reeb chords. The gluing condition on both ends says $ev_{21}$ should take the value of $\hat{d}_{21}$. 
    All we need is that $ev_{21}$ intersects $\hat{d}_{21}$ at a unique point. We will show in Type $\partial_1$, $ev_{21}$ sweeps half of $S^{n-1}$ and the other half is dealt with by Type $\partial_2$. This is done by a model calculation of explicit pseudoholomorphic curves in Step 3' below.

    \vskip.15in
    \noindent
    $Step$ 3'. \textbf{A model calculation}. All notations are limited to this step. 
    
    We replace the base of $v^{(1)}$ in Type $\partial_1$ by a standard one, i.e., the unit disk in $\mathbb{C}_z$ with a slit $\{-1\leq\mathrm{Re}\,z\leq0\}\cap\{\mathrm{Im}\,z=0\}$. The Lefschetz fibration over the unit disk is
    \begin{equation}
        p\colon(z_1,z_2,\dots,z_n)\mapsto z_1^2+z_2^2+\dots+z_n^2,
    \label{lefschetz}
    \end{equation}
    with a critical value at $0\in\mathbb{C}_z$. It is however more convenient to think of the case $n=2$ first, and the Lefschetz fibration is
    \begin{equation}
        p'\colon(z_1',z_2')\mapsto z_1'z_2',
    \label{clifford}
    \end{equation}
    where $z_1'=z_1+iz_2,\,z_2'=z_1-iz_2$. Let $T$ be the Clifford torus $\{|z_1'|=1\}\times\{|z_2'|=1\}$ over $|z|=1$ and let $L$ be the Lagrangian thimble over $\{-1\leq\mathrm{Re}\,z\leq0\}\cap\{\mathrm{Im}\,z=0\}$. $T\cap L$ is a clean $S^1$-intersection over $-1\in\mathbb{C}_z$.
    
    We consider curves with boundary on $T\cup L$: let $\mathcal{M}_2$ (2 stands for $n=2$) be the moduli space of holomorphic disks
    \begin{equation*}
        u=(u'_1,u'_2)\colon\mathbb{R}\times[0,1]\to\mathbb{C}^2_{z_1',z_2'}
    \end{equation*}
    with standard complex structure $J_{std}$ satisfying
    \begin{enumerate}
        \item $u(\mathbb{R}\times\{0\})\subset T$ and $u(\mathbb{R}\times\{1\})\subset L$;
        \item $p'\circ u$ has degree 1 over $\{|z|<1\}-\{-1\leq\mathrm{Re}\,z\leq0\}\cap\{\mathrm{Im}\,z=0\}$ and degree 0 otherwise;
        \item $u(0,0)=w_1=(w_{11}',w_{12}')=(1,1)$.
    \end{enumerate}

    \begin{figure}
        \centering
        \includegraphics[width=12cm]{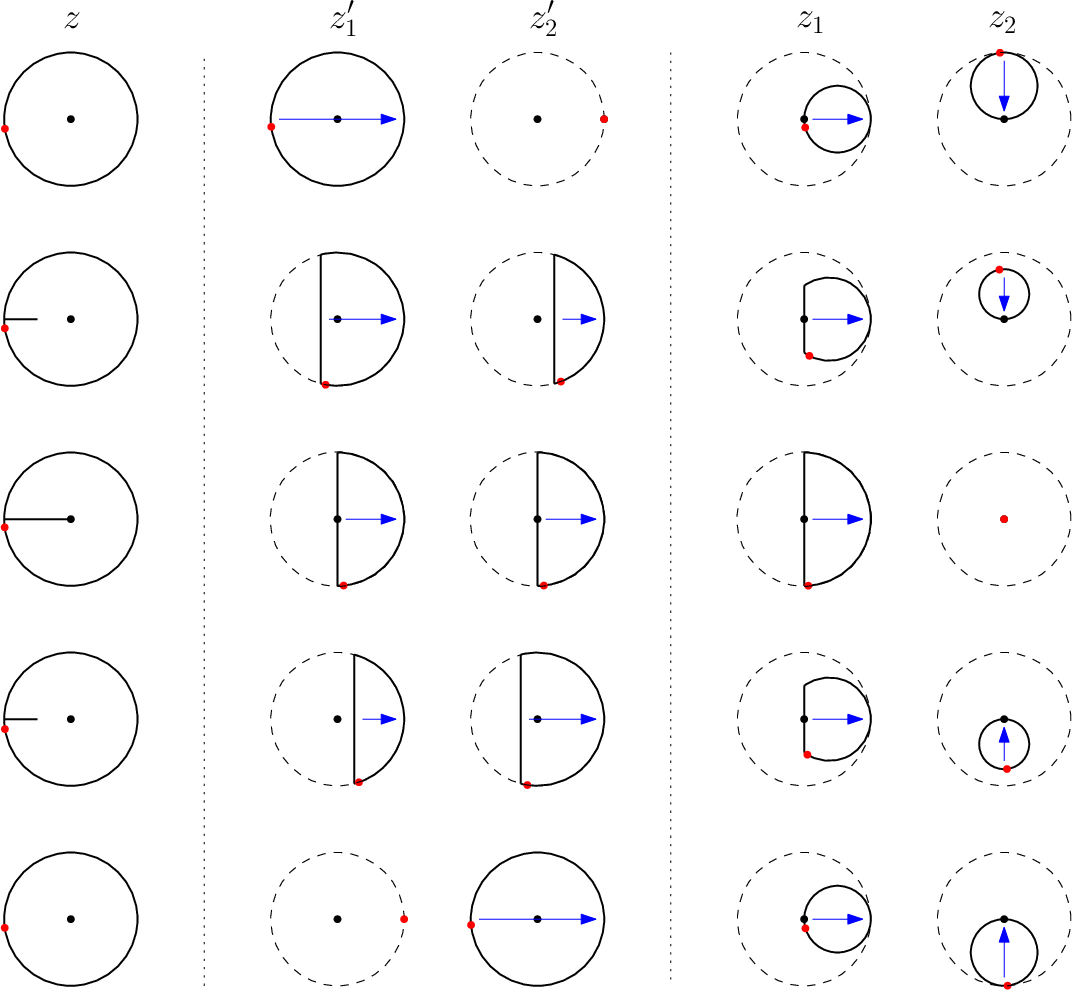}
        \caption{The pictorial description of $\mathcal{M}_2$. The left column describes the base $\mathbb{C}_z$. The middle and right column are for two sets of coordinates. Dashed curves denote the unit circle. The red dots indicate $e^{i(\pi+\epsilon)}$ on the left and for their preimages on the middle and right. The arrows indicate images of the arrow from $(0,1)$ to $(0,0)$ on the domain $\mathbb{R}\times[0,1]$. Note that the drawings are not necessarily accurate.}
        \label{disk-moduli}
    \end{figure}
    
    Condition (3) is essentially the same as that $v^{(1)}$ passes through $w_1$ in Step 3. It is not hard to see that $\mathcal{M}_2$ is homeomorphic to a line segment where $\partial\mathcal{M}_2$ consists of two curves $z\mapsto(z,1)$ and $z\mapsto(1,z)$. Figure \ref{disk-moduli} gives a schematic description of $\mathcal{M}_2$, from the top row of $z\mapsto(z,1)$ to the bottom row of $z\mapsto(1,z)$, where the right-hand column changes the coordinates to $(z_1,z_2)$.
    
    We then consider the evaluation map. The top-left end of $v^{(1)}$ in Type $\partial_1$ is translated to $e^{i(\pi+\epsilon)}\in\mathbb{C}_z$ for small $\epsilon>0$. 
    Define $ev'_1\colon\mathcal{M}_2\to S^1_{|z'_1|=1}$ as the $z'_1$ projection of the intersection between $u$ and $p'^{-1}(e^{i(\pi+\epsilon)})$, which is shown by red dots in the $z'_1$ column of Figure \ref{disk-moduli}.
    Clearly $ev'_1$ is a homeomorphism between $\mathcal{M}_2$ and $\{e^{i\theta_1}|\pi+\epsilon<\theta_1<2\pi\}$.
    
    For general $n\geq2$, define $\mathcal{M}_n$ similar to $\mathcal{M}_2$, where $T,L$ in condition (1) are still the Lagrangian vanishing cycles over the unit circle and $\{-1\leq\mathrm{Re}\,z\leq0\}\cap\{\mathrm{Im}\,z=0\}$. Condition (3) is modified to
    
    \begin{enumerate}
        \item[(3')] $u(0,0)=w_1=(w_{11},w_{12},\dots,w_{1n})=(1,0,\dots,0)$.
    \end{enumerate}
    
    The moduli space $\mathcal{M}_2$ in coordinate $(z_1,z_2)$ is viewed as the slice of $\mathcal{M}_n$ that restricts to $0$ on $z_3,\dots,z_n$. 
    Observe that for $\mathcal{M}_n$, the coordinates $z_2,z_3,\dots,z_n$ are symmetric. Thus we can recover $\mathcal{M}_n$ from $\mathcal{M}_2$ by a symmetric rotation of $z_2$-coordinate. Each $u=(z_1,z_2)\in\mathcal{M}_2$ corresponds to a $S^{n-2}$-family of curves in $\mathcal{M}_n$:
    \begin{equation*}
        u_{\lambda_1,\dots,\lambda_{n-1}}=(z_1,\lambda_1 z_2,\dots,\lambda_{n-1} z_2),
    \end{equation*}
    where $\lambda_1^2+\dots+\lambda_{n-1}^2=1$ and $\lambda_i\in\mathbb{R}$ for $i=1,\dots,n-1$. Figure \ref{disk-n} shows $u_{1/\sqrt{n-1},\dots,1/\sqrt{n-1}}$ recovered from the first row of Figure \ref{disk-moduli}. 
    
    \begin{figure}[ht]
        \centering
        \includegraphics[width=8cm]{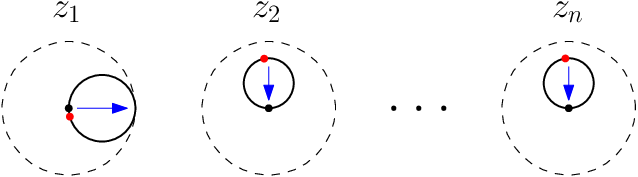}
        \caption{$u_{1/\sqrt{n-1},\dots,1/\sqrt{n-1}}$ of the first row in Figure $\ref{disk-moduli}$.}
        \label{disk-n}
    \end{figure}
    
    The new evaluation map $ev$ is defined as the $n$-tuple of coordinates which projects to $e^{i(\pi+\epsilon)}\in\mathbb{C}_z$. Therefore, $\mathcal{M}_n$ is homeomorphic to $D^{n-1}$. One can check that $ev\colon\mathcal{M}_n\to S^{n-1}$ is a homeomorphism to its image, which is half of the vanishing cycle $S^{n-1}$ over $e^{i(\pi+\epsilon)}$.
    
    In case $J_{std}$ is not regular, we apply small perturbation $J^{\lozenge}$ of $J_{std}$. One can show that for any $\boldsymbol{z}\in ev_{J_{std}}(\mathcal{M}_n)$, $\#ev^{-1}_{J^\lozenge}(\boldsymbol{z})=1$ mod $2$. Therefore the argument of Step 3 still works. This finishes Step 3'.
    
    \vskip.15in
    \noindent
    Finally we glue $v^{(1)},v^{(2)}$ and $v^{(3)}$. Still assume we are in Type $\partial_1$. The involution condition (\ref{involution}) will fix the neck length: As we take $\iota\circ v(b_2)\to\infty$, $q(\Theta_2)$ approaches $q(\check{c}_{21})$ but $|q(w_2)-q(\Theta_2)|\ll|q(w_2)-q(\check{c}_{21})|$. Assume $w_1$ is close to $\Theta_1$, there is a unique value of $\iota\circ v(b_2)$ for which there exists an involution of $F$. This completes the proof of Theorem \ref{theorem-half}. 
\end{proof}

\subsection{Curve counting}

The goal of this section is to count the full version of pseudoholomorphic annuli from $\Xi_1,\Xi_2$ to $\Theta_1,\Theta_2$ over the region in Figure \ref{arc-slide} and prove the following theorem:
\begin{theorem}
$\#\mathcal{M}^{\chi=0,\boldsymbol{w}}_{J^{\lozenge}}(\boldsymbol{\Xi},\boldsymbol{\Theta})=1$ mod $2$ for generic $\boldsymbol{w}$.
\label{theorem-full}
\end{theorem}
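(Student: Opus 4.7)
The plan is to reduce Theorem \ref{theorem-full} to the half-count (Theorem \ref{theorem-half}) via SFT-style neck-stretching along the horizontal axis $\{\mathrm{Im}\,z=0\}$ in $\widetilde{D}$, which lifts to a contact-type hypersurface in $\widetilde{W}$ separating it into $W$ and its upper mirror. A curve $u \in \mathcal{M}^{\chi=0,\boldsymbol{w}}_{J^{\lozenge}}(\boldsymbol{\Xi}, \boldsymbol{\Theta})$ projects to the annular region in $\widetilde{D}$ bounded by $\widetilde{\gamma}_1, \widetilde{\gamma}_2, \widetilde{\gamma}'_1, \widetilde{\gamma}'_2$, and as the neck length tends to infinity the SFT compactness theorem produces a holomorphic building with a lower level in $\bar{W}$ (carrying the $\boldsymbol{w}$-constraint) and an upper level in the mirror $\bar{W}$.

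First I would pin down the admissible buildings. By the Euler-characteristic balance $\chi_{\mathrm{lower}} + \chi_{\mathrm{upper}} - (\text{matched chords}) = 0$ together with the Fredholm index formula of Lemma \ref{lemma-index} and an action-filtration argument as in the proof of Lemma \ref{lemma-cycle}, the only admissible configuration consists of two disks ($\chi=1$) glued along the pair of long Reeb chords $\boldsymbol{c}=\{\check{c}_{12},\check{c}_{21}\}$, the lower disk running from $\boldsymbol{c}$ down to $\boldsymbol{\Theta}$ and the upper disk running from $\boldsymbol{\Xi}$ up to $\boldsymbol{c}$; short-chord asymptotes, higher-level buildings, and interior/boundary bubbling are all excluded because they would push the total index above $2n$ or violate the exactness set-up of Section \ref{section-definition}. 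The lower level with the $\boldsymbol{w}$-constraint is then precisely $\mathcal{M}^{\chi=1,\boldsymbol{w}}_{J^{\lozenge}}(\boldsymbol{c},\boldsymbol{\Theta})$, which contributes $1 \pmod 2$ by Theorem \ref{theorem-half}, while the upper level is identified via the reflection $z\mapsto\bar{z}$ with the mirror half-problem \emph{without} the $\boldsymbol{w}$-constraint; its moduli space is $2(n-1)$-dimensional, matched by the two Morse-Bott evaluation maps at $\check{c}_{12}$ and $\check{c}_{21}$ taking values in $S^{n-1}\times S^{n-1}$.

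The main obstacle is the Morse-Bott fiber-product gluing at the neck: I need to show that the fiber product of the lower and upper evaluation maps into $S^{n-1}\times S^{n-1}$ is odd. For this I would reuse the explicit model calculation of Step $3'$ in the proof of Theorem \ref{theorem-half}. There the lower evaluation map was shown to sweep out one open hemisphere of each $S^{n-1}$-Reeb-chord fiber (with boundary the equator $S^{n-2}$ corresponding to the degenerate case), and by the $z\mapsto\bar{z}$ symmetry the upper evaluation map sweeps out the complementary hemisphere. The two half-spheres meet transversely along $S^{n-2}$; after the generic perturbation $J^\lozenge$ of $J_{std}$ this intersection is a single transverse point mod 2 in each $S^{n-1}$-factor. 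Combined with the standard Morse-Bott gluing theorem, this yields $\#\mathcal{M}^{\chi=0,\boldsymbol{w}}_{J^{\lozenge}}(\boldsymbol{\Xi},\boldsymbol{\Theta}) \equiv 1 \pmod 2$. The final check is that the involution condition analogous to (\ref{involution}) fixes the neck length uniquely and is compatible with the matching, which follows from the same argument as in the last paragraph of the proof of Theorem \ref{theorem-half}.
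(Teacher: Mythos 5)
Your strategy---degenerate the annulus into two disk-like levels and compute a Morse--Bott fiber product over the $S^{n-1}\times S^{n-1}$ of Reeb chords---is in the right spirit but differs from the paper's route (which stretches toward the critical values at $\mathrm{Im}\,z=\pm K$ and pinches the thin strip between $\gamma_2,\gamma_2'$ to a slit, rather than splitting at $\mathrm{Im}\,z=0$), and as written it has a genuine gap at its central step. Theorem \ref{theorem-half} only outputs a mod $2$ number for curves asymptotic to the fixed chords $\boldsymbol{c}=\{\check{c}_{12},\check{c}_{21}\}$; to form the fiber product you need the image (or mod $2$ degree) of the evaluation map of the entire lower-level moduli space into $S^{n-1}\times S^{n-1}$, which is strictly more information. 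It is also not what Step 3$'$ provides: that model calculation concerns only the sub-piece $v^{(1)}$ near the bottom critical point, not the full half-curve. Moreover, your picture of two complementary hemispheres of each $S^{n-1}$ ``meeting transversely in a single point mod 2'' is dimensionally inconsistent: complementary closed hemispheres intersect along the equatorial $S^{n-2}$, which for $n>2$ is neither transverse nor zero-dimensional, and open hemispheres do not intersect at all. When the relevant fiber product is actually computed (Step 4$'$ of the paper, Figure \ref{evaluation}), $ev_l(\mathcal{M}_l)$ is a segment, $ev_r(\mathcal{M}_r)$ is two-dimensional, and their intersection is a one-parameter family $\mathcal{I}$ of glued curves, not a point.

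Relatedly, you underplay the domain involution. Since the projection of $u$ to $\mathbb{R}\times[0,1]$ is a branched double cover, any honest element of $\mathcal{M}^{\chi=0,\boldsymbol{w}}_{J^\lozenge}(\boldsymbol{\Xi},\boldsymbol{\Theta})$ carries a deck transformation exchanging $q(\Theta_1)\leftrightarrow q(\Theta_2)$, $q(\Xi_1)\leftrightarrow q(\Xi_2)$, $q(w_1)\leftrightarrow q(w_2)$, as in (\ref{full-involution}). In the paper this condition does two essential jobs that your proposal omits or defers: it eliminates all branching types except $0LR$ (your building analysis never discusses where the interior branch points or boundary switch points of the degree-$2$ region over the thin strip end up, yet several of those configurations contribute $0$ because an evaluation map sweeps a null-homologous $S^{n-2}$), and it is the constraint that finally cuts the family $\mathcal{I}$ down to a single curve. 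Treating the involution as a ``final compatibility check'' after having already concluded rigidity from the fiber product inverts the logic. If you want to split at $\mathrm{Im}\,z=0$ and reuse Theorem \ref{theorem-half}, you must first upgrade that theorem to a statement about the evaluation map of the half-moduli space on the whole Morse--Bott family, decide on which side the two point constraints $w_1,w_2$ sit (the paper places $w_2$ essentially on your neck), and then rerun the involution analysis for the glued annulus; none of these steps is automatic.
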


\noindent
\textit{Proof}. The strategy is the same as the proof of Theorem \ref{theorem-half}: We stretch the curve into several levels by choosing some extreme $\boldsymbol{w}$, and then use the restriction of domain involution and gluing conditions to find a unique (mod $2$) curve. 

We closely follow the proof of Theorem 9.3.7 of \cite{colin2020applications} and some details are omitted. As before, we write $u\colon\dot{F}\to\mathbb{R}\times[0,1]\times\widetilde{W}$ for an element in $\mathcal{M}^{\chi=0,\boldsymbol{w}}_{J^{\lozenge}}(\boldsymbol{\Xi},\boldsymbol{\Theta})$ and let $v$ be its projection to $\widetilde{W}$. 

The main idea is to stretch the base $\widetilde{D}$ in $\mathrm{Im}\,z$ direction as Figure \ref{full-split}: Let $\mathrm{Im}\,z_i=-2K$ and $\mathrm{Im}\,z_{i+\kappa}=2K$, $i=1,\dots,\kappa$ and $K\to+\infty$. 
The region $\mathcal{R}$ bounded by $\gamma_1,\gamma_1'$ is split into 3 parts: $\mathcal{R}_1=\mathcal{R}\cap\{\mathrm{Im}\,z\leq-K\}$, $\mathcal{R}_2=\mathcal{R}\cap\{-K\leq\mathrm{Im}\,z\leq K\}$ and $\mathcal{R}_3=\mathcal{R}\cap\{\mathrm{Im}\,z\geq K\}$. 
The mod $2$ count is independent of $\boldsymbol{w}$. 
We choose $\widetilde{w}_1$ close to $z_1$, $\widetilde{w}_2$ close to $\mathrm{Im}\,z=0$ and the thin strip $\mathcal{R}'$ between $\gamma_2, \gamma_2'$ with width $m\to 0$.

\begin{figure}[ht]
    \centering
    \includegraphics[height=2.5cm]{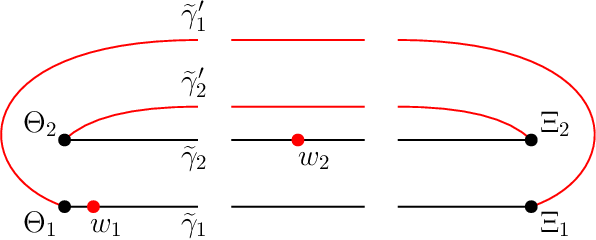}
    \caption{The stretched base as $K\gg0$. $\mathrm{Im}\,z$ is the horizontal direction.}
    \label{full-split}
\end{figure}

The curve $v$ has degree 2 over $\mathcal{R}'$ and degree 1 over $\mathcal{R}-\mathcal{R}'$. The types of branching behaviors are denoted by $2$, $1L$, $1R$, $0LL$, $0LR$, $0RR$. Take $0LR$ for example: 0 means the number of interior branch points is 0; $L$ means one pair of switch points is over $\widetilde{\gamma}_2'$; $R$ means one pair of switch points is over $\widetilde{\gamma}_2$. Denote (if exist) interior branch points by $b,b'\in \mathrm{int}(\dot{F})$ and switch points by $b_1,b_2,b_3,b_4\in\partial\dot{F}$. We assume $\iota(b')>\iota(b)$, $\iota(b_2)>\iota(b_1)$ and $\iota(b_4)>\iota(b_3)$. 

\vskip.15in
\noindent
$Step$ $1$. Suppose $K\gg 0$. Take a sequence of $u^{(i)}\in\mathcal{M}^{\chi=0,\boldsymbol{w}}_{J^{\lozenge}}(\boldsymbol{\Xi},\boldsymbol{\Theta})$ so that $v^{(i)}\colon F^{(i)}\to\widetilde{W}$ with $m^{(i)}\to0$. 
In the limit $i\to\infty$, the thin strip tends to a slit, the limiting curve splits into $v_{\infty}\cup\delta_+\cup\delta_-$, where $\delta_+$ is a gradient trajectory from $\Xi_2$ and $\delta_-$ is a gradient trajectory to $\Theta_2$, and $v_\infty$ is a pseudoholomorphic annulus. 
Figure \ref{full-limit} describes the limiting procedure in the case of Type $0LR$. In fact we will show that Type $0LR$ is the only nontrivial case. 

\begin{figure}[ht]
    \centering
    \includegraphics[width=8cm]{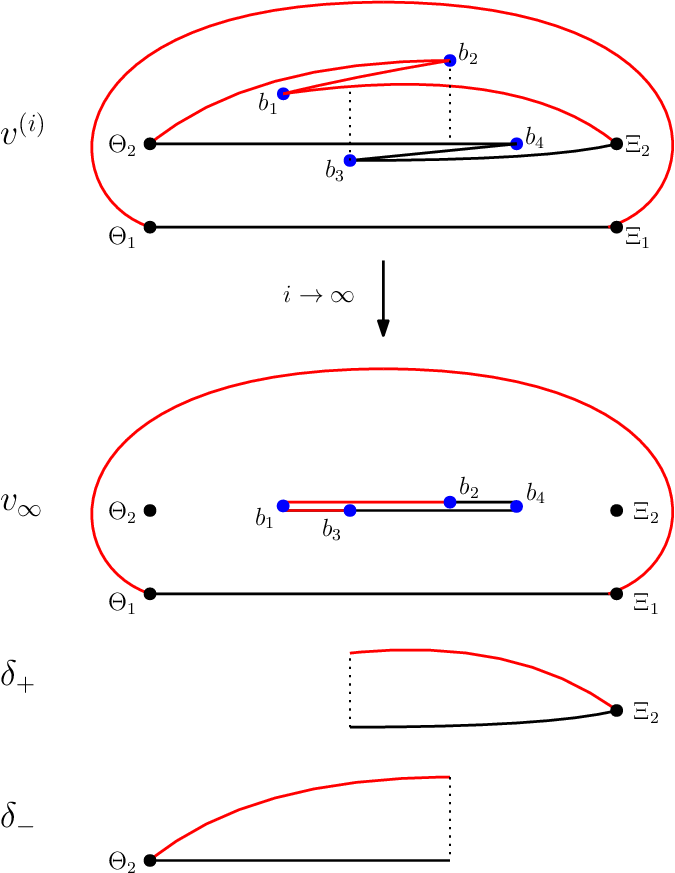}
    \caption{The limiting proceduce of Type $0LR$.}
    \label{full-limit}
\end{figure}

\noindent
$Step$ $2$. We claim that the limiting slit is long enough, i.e., for $K\gg0$ and $m\to0$,
\begin{equation}
    \mathrm{max}\{\iota(b'),\iota(b_2),\iota(b_4)\}\geq K,
\label{max_K}
\end{equation}
\begin{equation}
    \mathrm{min}\{\iota(b),\iota(b_1),\iota(b_3)\}\leq -K.
\label{min_K}
\end{equation}
which are the two endpoints of the slit. 

If (\ref{min_K}) is not true, i.e., $\mathrm{min}\{\iota(b),\iota(b_1),\iota(b_3)\}> -K$, then the part of $v_\infty$ in region $\mathcal{R}_1$ has no slit, which can be viewed as the outcome of a Lagrangian surgery on a trivial pseudoholomorphic disk. Similar to Step 2 in the proof of Theorem \ref{theorem-half}, the moduli space of pseudoholomorphic disks passing through a generic $w_1$ is diffeomorphic to $S^{n-2}$, of which the evaluation map at the cylindrical end has a $S^{n-2}$-intersection with the $S^{n-1}$-family of Reeb chords. The evaluation map vanishes at homology level, which contributes 0 (mod 2) to the curve count. The argument for (\ref{max_K}) is similar.
~\\

\noindent
$Step$ $3$. We claim that for $K\gg0$ and $m$ small, if (\ref{max_K}) and (\ref{min_K}) hold, the mod $2$ contribution of Type $2,1L,0LL,0RR$ is 0. The reason is that if one considers the involution condition
\begin{equation}
    q(\Theta_1)\mapsto q(\Theta_2),\,q(\Xi_1)\mapsto q(\Xi_2),\,q(w_1)\mapsto q(w_2)
\label{full-involution}
\end{equation}
for a pseudoholomorphic annulus, there is a constraint on the position of $\Theta_2,\Xi_2,w_2$ on the slit in $v_\infty$. One can refer to \cite{colin2020applications} for detailed discussion that all but Type $0LR$ contradict with (\ref{full-involution}).

\vskip.15in
\noindent
$Step$ $4$. It remains to consider Type $0LR$. Assuming (\ref{max_K}) and (\ref{min_K}) are satisfied, we claim that the contribution of Type $0LR$ is 1 (mod 2). 

Although there are other possible arrangements of $b_1,b_2,b_3,b_4$ on the slit, we just consider the case in Figure
\ref{full-limit} for illustration. As shown in Figure \ref{full-glue}, $v_\infty$ is the gluing of two regions: $v_{l,\infty}$ with $\mathrm{Im}\,z\ll-K$ and $v_{r,\infty}$ with $\mathrm{Im}\,z\gg-K$, which can be viewed as two pseudoholomorphic disks similar to Step 3' of the previous section. The conditions that $w_1$ is close to $\Theta_1$ and that $w_2$ sits on the slit will be translated to an explicit model calculation in Step 4' below. For suitable choices of $w_1$ and $w_2$, we will show that the space of two disks with $c_{21}=c^*_{21},\,c_{12}=c^*_{12}$ is homeomorphic to a line segment $\mathcal{I}$. Let $M_{\mathcal{I}}$ be the set of $v_\infty$ glued from the $\mathcal{I}$-family of $(v_{l,\infty},v_{r,\infty})$. 

\begin{figure}[ht]
    \centering
    \includegraphics[height=2cm]{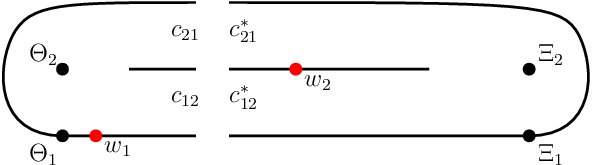}
    \caption{$v_\infty$ with a long slit. $\mathcal{R}_1$ on the left and $\mathcal{R}_2\cup\mathcal{R}_3$ on the right.}
    \label{full-glue}
\end{figure}

The involution condition (\ref{full-involution}) determines a unique curve inside $M_\mathcal{I}$. In conclusion, $\#\mathcal{M}^{\chi=0,\boldsymbol{w}}_{J^{\lozenge}}(\boldsymbol{\Xi},\boldsymbol{\Theta})=1$ mod $2$. We have proved Theorem \ref{theorem-full} modulo the model calculation below:

\vskip.15in
\noindent
$Step$ 4'. \textbf{A model calculation}. We use similar notations as in Step 3' of the previous section. Define $\mathcal{M}_l$ for the space of $v_l$ over the left side of Figure \ref{full-glue} and $\mathcal{M}_r$ for those of $v_r$ over the right side. Both are viewed as maps from $\mathbb{R}\times[0,1]$ to $\mathbb{C}^n$ over the unit circle with one slit $\subset\{-1\leq\mathrm{Re}\,z\leq0\}\cap\{\mathrm{Im}\,z=0\}\subset\mathbb{C}_z$. Consider the evaluation maps
\begin{equation*}
    ev_l,ev_r\colon\mathcal{M}_l,\mathcal{M}_r\to S^{n-1}\times S^{n-1},
\end{equation*}
which are defined below, corresponding to $c_{12},c_{21},c^*_{12},c^*_{21}$ in Figure \ref{full-glue}. The gluing condition is $ev_l(v_l)=ev_r(v_r)$. 

As usual, we first consider the case of $n=2$ with Lefschetz fibration (\ref{clifford}). 

The moduli space $\mathcal{M}_l$ is defined as holomorphic disks passing through $w_1=(1,1)$ over $1\in\mathbb{C}_z$. Note that $\mathcal{M}_l$ is the same as $\mathcal{M}$ in Step 3' of the previous section. Let $w_{\pm}=e^{i(\pi\pm\epsilon)}\in\mathbb{C}_z$. Define the evaluation map $ev_l$ as
\begin{gather*}
    ev_l\colon\mathcal{M}_l\to S^1\times S^1,\\
    v_l\mapsto(ev_{l+}(v_l),ev_{l-}(v_l)), 
\end{gather*}
where $ev_{l\pm}(v_l)$ is the $z_1'$-coordinate of the point that projects to $w_{\pm}$. Figure \ref{1-moduli} gives a schematic description of $\mathcal{M}_l$, homeomorphic to a line segment, and its evaluation maps $ev_{l\pm}$ denoted by red and violet dots. 
We also show the maps in coordinates $(z_1,z_2)$ with $z_1'=z_i+iz_2,\,z_2'=z_i-iz_2$. 

\begin{figure}[ht]
    \centering
    \includegraphics[width=12cm]{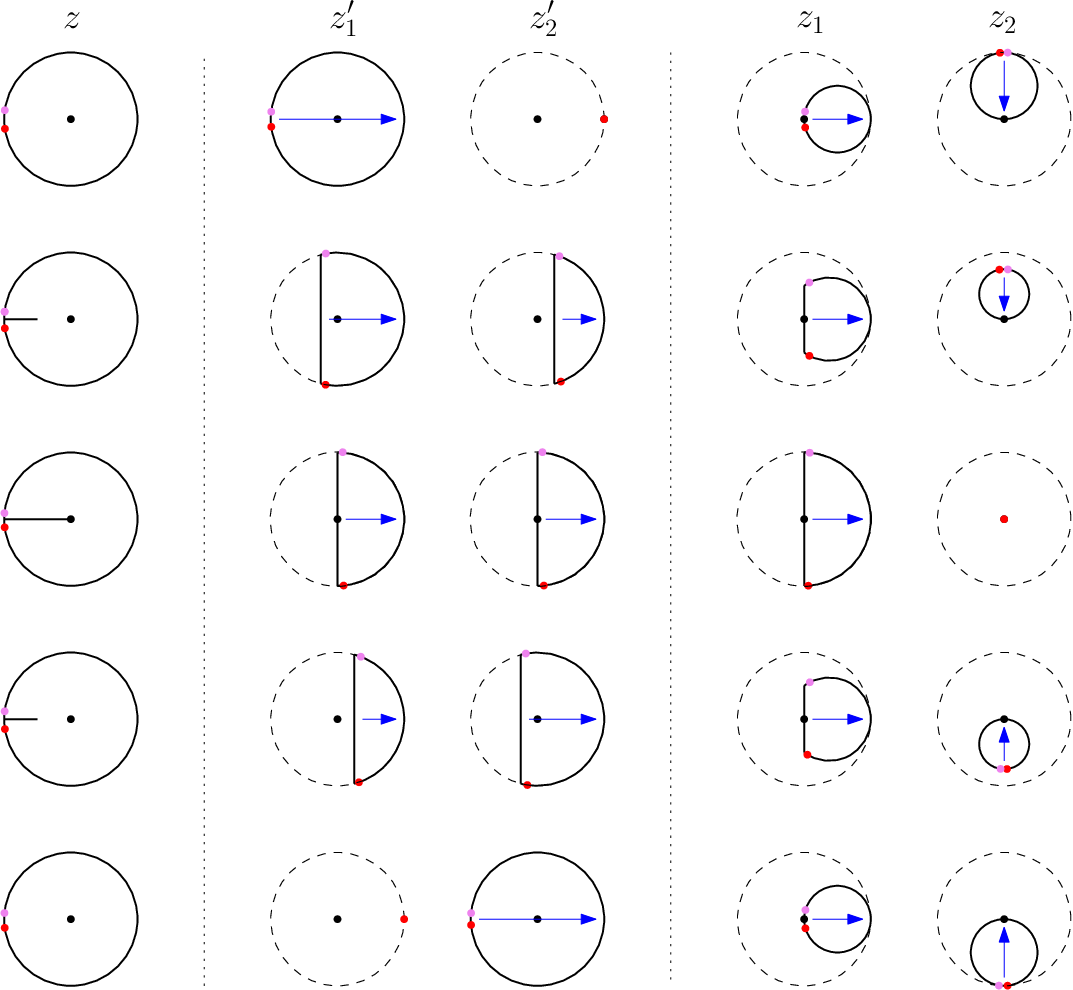}
    \caption{The pictorial description of $\mathcal{M}_l$. The left column describes the base $\mathbb{C}_z$. The middle and right columns are for two sets of coordinates. The red and violet dots indicate $e^{i(\pi\pm\epsilon)}$ on the left and their preimages on the middle and right.}
    \label{1-moduli}
\end{figure}

Since $w_1=(1,0)$ in coordinate $(z_1,z_2)$, define $w_1=(1,0,0,\dots,0)$ for $n\geq2$. Then $\mathcal{M}_{l,2}$, the moduli space for $n=2$, is viewed as the slice of $\mathcal{M}_{l,n}$ that restricts to $0$ on $z_3,\dots,z_n$. Now $\mathcal{M}_{l,n}$ is a symmetric rotation of $\mathcal{M}_{l,2}$ which contains 
\begin{equation*}
    v'=(z_1,\lambda_1 z_2,\dots,\lambda_{n-1} z_2),
\end{equation*}
where $\lambda_1^2+\dots+\lambda_{n-1}^2=1$, $\lambda_1,\dots,\lambda_{n-1}\in\mathbb{R}$. Thus $\mathcal{M}_{l,2}$ is homeomorphic to $D^{n-1}$.

Next we consider $\mathcal{M}_r$ of $v''\colon\mathbb{R}\times[0,1]\to\mathbb{C}^n$ and first let $n=2$.
We put the constraint that $v_r$ passes through $(-r,r)\in L$ for some $r\in[0,1]$, which corresponds to $w_2$ sitting on the slit in Figure \ref{full-glue}. 
Let $ev_{r\pm}(v_r)$ be the $z_1'$-coordinate of the point that projects to $w_\pm$ and define the map
\begin{gather*}
    ev_r\colon\mathcal{M}_r\to S^1\times S^1,\\
    v_r\mapsto(ev_{r-}(v_r),ev_{r+}(v_r)), 
\end{gather*}
where $+$ and $-$ are switched because we want to identify $w_\pm$ of $v_l$ with $w_\mp$ of $v_r$.

\begin{figure}[ht]
    \centering
    \includegraphics[width=13cm]{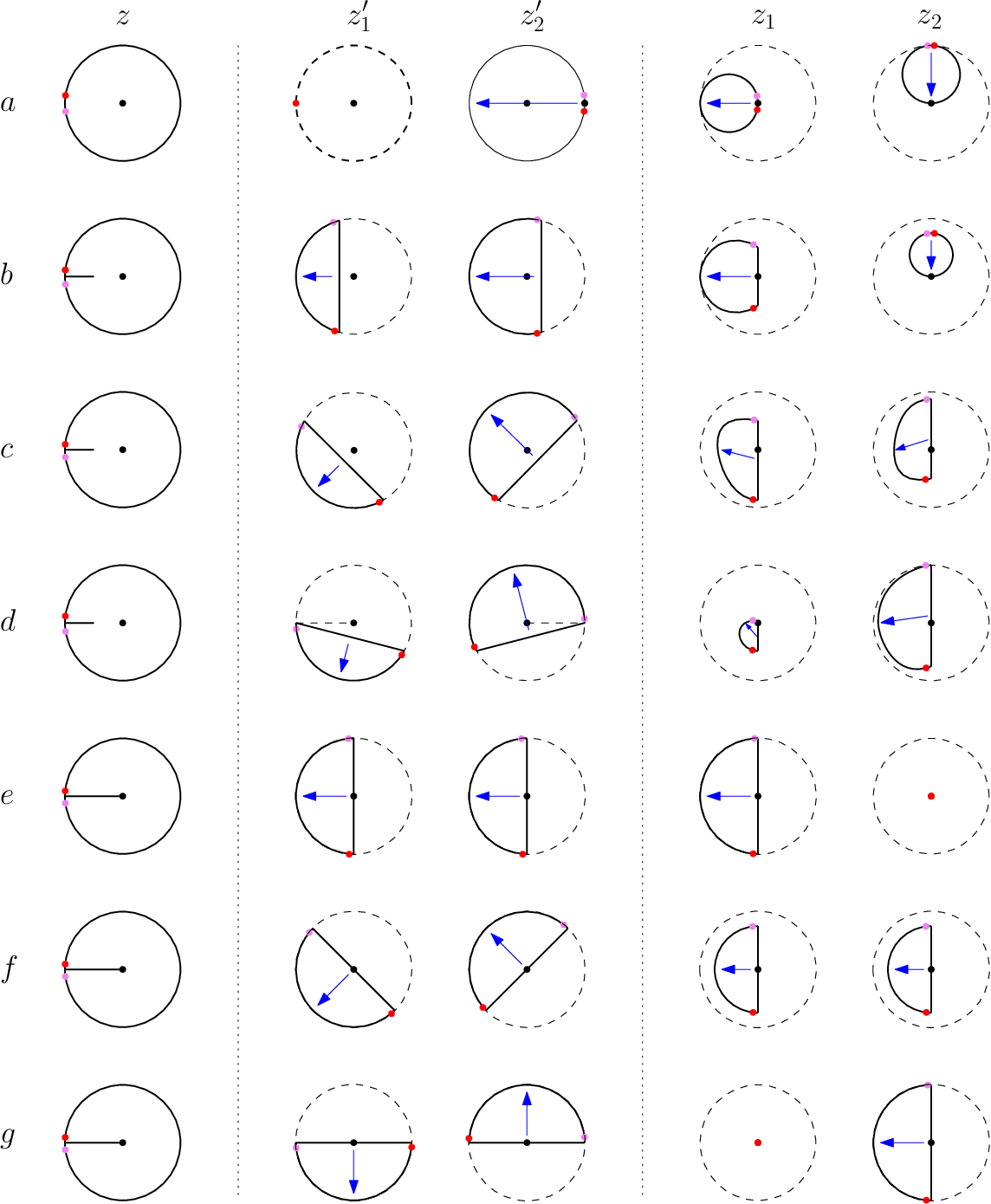}
    \caption{The pictorial description of part of $\mathcal{M}_r$. The notations are as before, while the red and violet dots indicate $e^{i(\pi\mp\epsilon)}$ on the left and their preimages on the middle and right.}
    \label{2-moduli}
\end{figure}

Observe that $\mathcal{M}_{r,2}$ is of dimension 2. Figure \ref{2-moduli} describes some of the curves inside $\mathcal{M}_{r,2}$. $(-r,r)$ in $(z_1',z_2')$ equals $(0,ir)$ in $(z_1,z_2)$. Thus for $n\geq2$, if $\mathcal{M}_{r,2}$ is viewed as the slice of $\mathcal{M}_{r,n}$ with $z_3=\dots=z_n=0$, then $\mathcal{M}_{r,n}$ contains curves of 
\begin{equation*}
    v''=(\lambda_1 z_1,z_2,\lambda_2 z_1,\dots,\lambda_{n-1} z_1),
\end{equation*}
where $\lambda_1^2+\dots+\lambda_{n-1}^2=1$, $\lambda_1,\dots,\lambda_{n-1}\in\mathbb{R}$. 

For $n=2$, we have the following observation:
\begin{claim}
For $n=2$ and $\epsilon\to0$, the evaluation map of $\mathcal{M}_{l}$ and $\mathcal{M}_{r}$ with images in $S^1\times S^1$ is described in Figure \ref{evaluation}. $ev_l(\mathcal{M}_{l})$ is the blue line segment and $ev_r(\mathcal{M}_{r})$ is the 2-dimensional pink region. Their intersection is a line segment $\mathcal{I}$.
\end{claim}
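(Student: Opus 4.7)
The plan is to exploit the toric structure of the Lefschetz fibration $p'(z_1',z_2')=z_1'z_2'$ to compute both evaluation maps explicitly for $n=2$. In these coordinates the Clifford torus is $T=\{|z_1'|=|z_2'|=1\}$ and the thimble is $L=\{(z_1',-\overline{z_1'}):|z_1'|\le 1\}$. Any curve in $\mathcal{M}_l\cup\mathcal{M}_r$ is a holomorphic strip $v=(u_1,u_2)\colon\mathbb R\times[0,1]\to\mathbb C^2$ whose base projection $\psi=u_1u_2$ uniformizes the slit disk; fixing $\psi$ reduces the problem to a Riemann--Hilbert problem for the holomorphic function $u_1$, with $|u_1|=1$ on the $T$-arc, $|u_1|^2=|\psi|$ (phase free) on the $L$-arc, and an interior point constraint.

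For $\mathcal{M}_l$ I would use the already-established identification with an interval whose endpoint curves are $(z,1)$ and $(1,z)$, on which $ev_l$ takes the values $(e^{i(\pi+\epsilon)},e^{i(\pi-\epsilon)})$ and $(1,1)$ respectively. The key observation is that the anti-holomorphic involution $\tau\colon(u_1,u_2)(\zeta)\mapsto(\overline{u_1(\bar\zeta)},\overline{u_2(\bar\zeta)})$ preserves $T$, $L$, $p'$, and the marked point $w_1=(1,1)$, so it descends to an involution of $\mathcal{M}_l$ that exchanges the preimages of $w_\pm$. Since both endpoint curves are $\tau$-invariant, the induced involution on the interval $\mathcal{M}_l$ fixes both endpoints and is therefore the identity. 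Hence every $v_l$ is self-conjugate, forcing $ev_{l+}(v_l)=\overline{ev_{l-}(v_l)}$, so $ev_l(\mathcal{M}_l)$ lies on the antidiagonal $\{(e^{i\alpha},e^{-i\alpha})\}\subset S^1\times S^1$. Continuity together with the endpoint values then identify it, in the limit $\epsilon\to 0$, with the arc from $(1,1)$ to $(-1,-1)$, which is the blue segment.

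For $\mathcal{M}_r$ I would parameterize by $(r,\theta)$, where $r\in[0,1]$ is the slit coordinate of the marked point $(-r,r)\in L$ and $\theta$ records the remaining phase freedom left by the point constraint $u_1(\zeta_r)=-r$ against the free phase of $u_1$ on the $L$-arc. One checks that $\tau$ acts on this parameter by $(r,\theta)\mapsto(r,-\theta)$, so $\tau$-equivariance of $ev_r$ now reads $ev_{r+}(r,\theta)=\overline{ev_{r-}(r,-\theta)}$; this no longer pins the image to the antidiagonal and allows $ev_r$ to be a local diffeomorphism onto a two-dimensional region of $S^1\times S^1$. Solving the Riemann--Hilbert problem for $u_1$ in closed form yields explicit formulas for $ev_{r\pm}(r,\theta)$, from which the pink region is read off; the intersection with $ev_l(\mathcal{M}_l)$ corresponds to the antidiagonal condition $ev_{r+}(r,\theta)=\overline{ev_{r-}(r,\theta)}$, which combined with the $\tau$-equivariance is cut out by $\theta=0$, a one-parameter locus giving the segment $\mathcal{I}$.

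The main obstacle is the explicit Riemann--Hilbert analysis underlying $\mathcal{M}_r$: one must solve for $u_1$ under the two modulus conditions together with the interior point constraint on $L$, and verify that the map $(r,\theta)\mapsto ev_r(r,\theta)$ really is an embedding onto the full pink region. The $\mathcal{M}_l$ step is by contrast soft, relying only on the involution argument; the substance of the claim lies in exhibiting a genuinely 2-dimensional pink region and showing that the blue line segment cuts it in a connected arc $\mathcal{I}$.
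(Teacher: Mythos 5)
Your treatment of $\mathcal{M}_l$ is correct and arguably cleaner than the paper's: the antiholomorphic involution $\tau$ does preserve $T$, $L=\{(w,-\bar w):|w|\le1\}$, the slit disk, and $w_1=(1,1)$, it exchanges the fibres over $e^{i(\pi\pm\epsilon)}$, and a homeomorphic involution of an interval fixing both endpoints is the identity; hence every $v_l$ is self-conjugate, $ev_{l+}=\overline{ev_{l-}}$, and $ev_l(\mathcal{M}_l)$ lies on the antidiagonal $\{\theta_1+\theta_2=2\pi\}$, which is the blue segment. One small caveat: continuity plus the endpoint values $(1,1)$ and $(e^{i(\pi+\epsilon)},e^{i(\pi-\epsilon)})$ do not by themselves decide which of the two antidiagonal arcs you obtain; you need the injectivity of $ev_{l+}$ onto $\{\pi+\epsilon<\theta_1<2\pi\}$ established in Step 3' (which you implicitly invoke via ``the already-established identification with an interval''). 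The paper instead reads the same locus off the explicit one-parameter family of Figure \ref{1-moduli}.

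The genuine gap is in the $\mathcal{M}_r$ half, which you yourself flag as ``the main obstacle.'' The content of the Claim is precisely that $ev_r(\mathcal{M}_r)$ is the specific two-dimensional pink region of Figure \ref{evaluation} and that it meets the blue segment in a connected arc $\mathcal{I}$; your proposal replaces this with the assertion that solving the Riemann--Hilbert problem in closed form ``yields explicit formulas \dots from which the pink region is read off,'' without carrying out that computation. As written, nothing excludes the image being lower-dimensional, missing the antidiagonal, or meeting it in a disconnected set or a point --- and the rest of Step 4' (the dotted arc meeting the blue line in a single point, which selects a unique $v_\infty$ in the $\mathcal{I}$-family) depends on the actual shape of this region. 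Your equivariance relation $ev_{r+}(r,\theta)=\overline{ev_{r-}(r,-\theta)}$ only shows that the fixed locus $\theta=0$ maps \emph{into} the antidiagonal; it shows neither that this locus maps \emph{onto} a segment nor that it exhausts the intersection (for which you would need injectivity of $ev_r$). The paper's own route is to exhibit the two-parameter family explicitly in the coordinates $(z_1',z_2')$ --- the curves $a$--$g$ of Figure \ref{2-moduli}, obtained by varying the slit and the point constraint $(-r,r)$ --- and to evaluate directly; completing your argument requires the equivalent explicit computation.
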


\begin{figure}[ht]
    \centering
    \includegraphics[width=5cm]{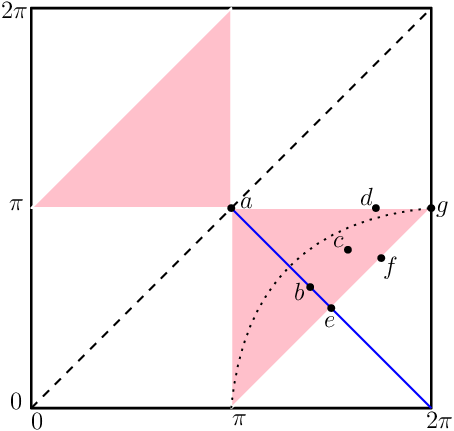}
    \caption{The description of $ev_l$ (blue) and $ev_r$ (pink) for $n=2$. The sides are identified. $a$-$g$ correspond to curves in Figure \ref{2-moduli}.}
    \label{evaluation}
\end{figure}

For general $n\geq2$, we have the same result:
\begin{lemma}
    For $n\geq2$ and $\epsilon\to0$, the intersection between $ev_l(\mathcal{M}_{l})$ and $ev_r(\mathcal{M}_{r})$ is still $\mathcal{I}$.
    \label{high-intersection}
\end{lemma}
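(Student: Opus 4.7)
The plan is to use the evident rotational symmetry of the quadratic Lefschetz fibration \eqref{lefschetz} to reduce the claim to the $n=2$ case handled in the preceding Claim. Let $G_l \cong SO(n-1) \subset SO(n)$ be the subgroup rotating $(z_2,\dots,z_n)$ and fixing $z_1$, and let $G_r \cong SO(n-1) \subset SO(n)$ rotate $(z_1,z_3,\dots,z_n)$ while fixing $z_2$. Both preserve $p$, the vanishing cycles over $|z|=1$, and the Lagrangians $T$ and $L$. The explicit parametrizations $v_l = (z_1,\lambda_1 z_2,\dots,\lambda_{n-1} z_2)$ and $v_r = (\mu_1 z_1,z_2,\mu_2 z_1,\dots,\mu_{n-1} z_1)$ exhibit $\mathcal{M}_{l,n} = G_l \cdot \mathcal{M}_{l,2}$ and $\mathcal{M}_{r,n} = G_r \cdot \mathcal{M}_{r,2}$, with each $n=2$ slice sitting in $\{z_3 = \cdots = z_n = 0\}$. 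By equivariance of the evaluation maps,
\[
ev_l(\mathcal{M}_{l,n}) = G_l \cdot ev_l(\mathcal{M}_{l,2}), \qquad ev_r(\mathcal{M}_{r,n}) = G_r \cdot ev_r(\mathcal{M}_{r,2})
\]
as subsets of $S^{n-1} \times S^{n-1}$, with both actions diagonal on the two sphere factors.

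Next, given any point $p$ in the intersection, write $p = g_l q^l = g_r q^r$ with $q^l \in ev_l(\mathcal{M}_{l,2})$, $q^r \in ev_r(\mathcal{M}_{r,2})$, $g_l \in G_l$, $g_r \in G_r$. The element $h := g_r^{-1} g_l \in SO(n)$ must then send the $n=2$ slice point $q^l$ to the slice point $q^r$. Parametrizing $g_l$ by its first column $\lambda = (\lambda_1,\dots,\lambda_{n-1}) \in S^{n-2}$, the vector $g_l q^l$ has $(z_1,z_3,\dots,z_n)$-components equal to $(a_\pm, \lambda_2 b_\pm, \dots, \lambda_{n-1} b_\pm)$ over $w_\pm$, where $(a_\pm, b_\pm)$ are the $(z_1,z_2)$-components of $q^l$. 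For $g_r^{-1}$, which acts as a real rotation on $\mathbb{C}^{n-1}$, to collapse this vector onto the $z_1$-axis, the vector must be a complex multiple of a real one, forcing $a_\pm / b_\pm \in \mathbb{R}$ whenever some $\lambda_i \neq 0$ with $i \geq 2$. Since $a_\pm / b_\pm$ is generically non-real along $\mathcal{M}_{l,2}$ (as visible in Figure \ref{1-moduli}, where the evaluations spiral around the circle), this forces $\lambda = \pm e_1$, so that $g_l$ acts trivially on $q^l$ up to a possible sign flip of $z_2$.

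The $\lambda = -e_1$ branch can be ruled out using the asymmetric basepoint $(-r,r)$ through which $v_r$ is required to pass, together with the orientation of the disks in $\mathcal{M}_{r,2}$, so that the $z_2$-reflection of $ev_l(\mathcal{M}_{l,2})$ does not meet $ev_r(\mathcal{M}_{r,2})$. We then conclude $p \in ev_l(\mathcal{M}_{l,2}) \cap ev_r(\mathcal{M}_{r,2}) = \mathcal{I}$ by the Claim for $n=2$; the reverse inclusion $\mathcal{I} \subset ev_l(\mathcal{M}_{l,n}) \cap ev_r(\mathcal{M}_{r,n})$ is immediate from the slice embedding. The hardest part will be justifying the generic non-reality of $a_\pm / b_\pm$ from the explicit description of $\mathcal{M}_{l,2}$ and carefully handling the sign-flip subtlety; degenerate loci are of codimension $\geq 1$ and can be absorbed by closedness of the intersection together with a small perturbation of $\epsilon$ to break residual discrete symmetry.
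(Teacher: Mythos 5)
Your overall strategy agrees with the paper's in spirit: both proofs exploit the explicit parametrizations $v_{l,\boldsymbol{\lambda}}=(z_{l1},\lambda_1z_{l2},\dots,\lambda_{n-1}z_{l2})$ and $v_{r,\boldsymbol{\lambda'}}=(\lambda'_1 z_{r1},z_{r2},\lambda'_2 z_{r1},\dots,\lambda'_{n-1}z_{r1})$ to show that the extra coordinates must vanish at any intersection, which reduces to the $n=2$ Claim. Your framing in terms of equivariant $SO(n-1)$-actions is a reasonable way to package this, and your identification that the $(z_1,z_3,\dots,z_n)$-vector must be a complex multiple of a real vector is the right necessary condition.

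However, there is a genuine gap at precisely the point you flag yourself. The argument bottoms out at the assertion that $a_\pm/b_\pm = z_{l1\pm}/z_{l2\pm}$ is ``generically non-real,'' and that the $\lambda=-e_1$ branch and degenerate loci can be swept away by ``closedness of the intersection together with a small perturbation of $\epsilon$.'' None of this is justified, and the $\epsilon$-perturbation remark is particularly strange since the Lemma is stated in the limit $\epsilon\to 0$; perturbing $\epsilon$ does not by itself interact with residual discrete symmetry in any transparent way. More importantly, your reality criterion is a one-evaluation-point condition, whereas the constraint one actually has is the \emph{simultaneous} matching over $w_+$ and $w_-$ with the $\pm$/$\mp$ switch, and it is the interplay of the two that the paper exploits. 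The paper's proof avoids genericity entirely: it observes the exact conjugation symmetries of the $n=2$ model, namely $\mathrm{Re}\,z_{l2+}=-\mathrm{Re}\,z_{l2-}$, $\mathrm{Im}\,z_{l2+}=\mathrm{Im}\,z_{l2-}$ (so $z_{l2+}=-\overline{z_{l2-}}$) on the $\mathcal{M}_l$ side, and $\mathrm{Re}\,z_{r1-}=\mathrm{Re}\,z_{r1+}$, $\mathrm{Im}\,z_{r1-}=-\mathrm{Im}\,z_{r1+}$ (so $z_{r1-}=\overline{z_{r1+}}$) on the $\mathcal{M}_r$ side (after first using $\mathrm{Im}\,\lambda_1z_{l2+}=\mathrm{Im}\,\lambda_1z_{l2-}$ to restrict to the relevant slice $\{\theta_1+\theta_2=2\pi\}$ of $\mathcal{M}_r$). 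Conjugating the matching equation $\lambda_i z_{l2+}=\lambda'_i z_{r1-}$ and comparing with $\lambda_i z_{l2-}=\lambda'_i z_{r1+}$ then forces $\lambda_i z_{l2\pm}=0$ for all $i\geq 2$ outright, with no case analysis, no reflection branch to rule out, and no genericity. To rescue your version you would need to actually establish the non-reality statement (which is a different computation from the conjugation identities and is not obviously true on the nose) and give an honest argument for the sign-flip branch, at which point you would effectively be re-deriving the paper's conjugation symmetries anyway.
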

\noindent
\textit{Proof of Lemma \ref{high-intersection}}. Suppose $v_{l,\boldsymbol{\lambda}}=(z_{l1},\lambda_1z_{l2},\dots,\lambda_{n-1}z_{l2})$ and $v_{r,\boldsymbol{\lambda'}}=(\lambda'_1 z_{r1}, \linebreak z_{r2},\lambda'_2 z_{r1},\dots,\lambda'_{n-1}z_{r1})$ satisfy $ev_l(v_{l,\boldsymbol{\lambda}})=ev_r(v_{r,\boldsymbol{\lambda'}})$ where the 3rd to $n$-th coordinates are not all zero. Denote $ev_{l\pm}(v_{l,\boldsymbol{\lambda}})=(z_{l1\pm},\lambda_1z_{l2\pm},\dots,\lambda_{n-1}z_{l2\pm})$ and $ev_{r\mp}(v_{r,\boldsymbol{\lambda'}})=(\lambda'_1 z_{r1\mp},z_{r2\mp},\lambda'_2 z_{r1\mp},\dots,\lambda'_{n-1}z_{r1\mp})$. 

Observe that $\mathrm{Im}\,\lambda_1z_{l2+}=\mathrm{Im}\,\lambda_1z_{l2-}$ and then $z_{r2-}=z_{r2+}$. From $c$ and $d$ in Figure \ref{2-moduli} we see that $ev_r((z_{r1},z_{r2}))$ (in coordinate $(z_1',z_2')$) must lie in $\{\theta_1+\theta_2=2\pi\}\cap\{\pi\leq\theta_1\leq3\pi/2\}$ of Figure \ref{evaluation}. For such curves $v$ (as ($a,b,e$) in Figure \ref{2-moduli}), $\mathrm{Im}\,z_{r1-}=-\mathrm{Im}\,z_{r1+}$ and $\mathrm{Re}\,z_{r1-}=\mathrm{Re}\,z_{r1+}$. 
Since $\mathrm{Re}\,z_{l2+}=-\mathrm{Re}\,z_{l2-}$ and $\mathrm{Im}\,z_{l2+}=\mathrm{Im}\,z_{l2-}$, the consequence is that $\lambda_2z_{l2\pm}=\dots=\lambda_{n-1}z_{l2\pm}=0$, which is a contradiction. 
\qed

\vskip.15in
\noindent
Now we go back to the curve counting problem. We want to pick a single curve from the $\mathcal{I}$-family of $v_\infty$ and then glue it to get a unique $v^{(i)}$ for each $i$.

First we show the position of $b_2^\infty$ determines $v_\infty$ uniquely in $\mathcal{I}$: Consider the slit in $v_\infty$ of Figure \ref{full-limit}. In the limit $q(w_1)=q(\Theta_1)$, so $q(w_2)=q(\Theta_2)=q(b_2^\infty)$. If we fix $b_2^\infty$, then $v_\infty''$ passes through $(-r,r)$ for some $r\in[0,1]$, corresponding to fixing a hypersurface in $S^{n-1}\times S^{n-1}$, whose intersection with $S^1\times S^1$ is the dotted arc in Figure \ref{evaluation}. 
The dotted arc intersects the blue line at a single point, which determines $v_\infty$. 
Moreover, the length of the slit in $v_\infty'$ and $v_\infty''$ are determined and thus $b_1^\infty$ and $b_4^\infty$ are fixed. Finally $b_3^\infty$ is fixed by the involution of $F^\infty$.

Consider then $v^{(i)}$ for large $i$. From the previous paragraph $\iota(b_2^{(i)})$ will fix a unique $v_\infty$ in $\mathcal{I}$. By Implicit Function Theorem, it will fix a unique $v^{(i)}$ as well, which is close to $v_\infty$. Then observe that the distance between $q(\Theta_2)$ and $q(w_2)$ is a monotone function of $\iota(b_2^{(i)})$: As $\iota(b_2^{(i)})$ increases, the slit gets longer, $b_1^{(i)}$ moves left and $b_4^{(i)}$ moves right. Therefore $q(w_2)$ leaves $q(\Theta_2)$ and approaches $q(\Xi_2)$ on $F^{(i)}$. The involution of $F^{(i)}$ determines a unique $\iota(b_2^{(i)})$ and thus a unique $v^{(i)}$. 

This finishes the proof of Theorem \ref{theorem-full}.
\qed

\section{A model calculation of quadrilaterals}
\label{section-model}

We make a model calculation which will be used in Section \ref{section-markov} and \ref{section-example}. 
Consider the trivial fibration $\hat{p}\colon\mathbb{C}\times T^*S^{n-1}\to\mathbb{C}$ and Lagrangian submanifolds $a_i=\{y=i\}\times S^{n-1}$, $i=1,2$, $b_j=\{x=j\}\times S^{n-1}$, $j=1,2$, where $S^{n-1}$ is the zero section of $T^*S^{n-1}$. 
We further modify $a_i,b_j$ to $a'_i,b'_j$ by a Hamiltonian perturbation in the fiber direction so that they intersect transversely. 
Specifically, we choose the restriction of Euclidean metric on $S^{n-1}$ and identify $T^*S^{n-1}$ with $TS^{n-1}$. 
Choose Morse functions $f_1,f_2,f_3,f_4$ on $S^{n-1}$ each with 2 critical points and all of the critical points are disjoint (as the right of Figure \ref{quadrilateral}).
We can then rescale these Morse functions so that the difference of each pair is still Morse with 2 critical points:

\begin{figure}[ht]
    \centering
    \includegraphics[width=10cm]{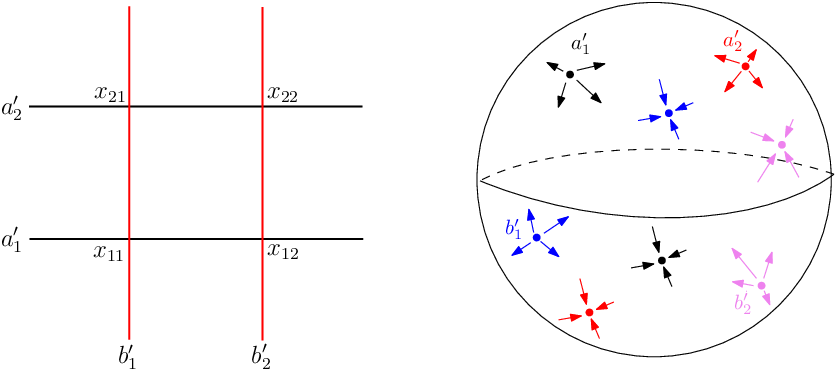}
    \caption{The base $\mathbb{C}$ on the left and the fiber $T^*S^{n-1}$ on the right.}
    \label{quadrilateral}
\end{figure}

\begin{lemma}
    For small enough $\epsilon>0$, the difference of each pair of functions in $\{\epsilon^3f_1,\epsilon^2f_2,\epsilon f_3,f_4\}$ is Morse with 2 critical points.
\end{lemma}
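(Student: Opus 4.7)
The plan is to reduce each of the six pairwise differences to a $C^2$-small perturbation of one of the functions $-f_k$, and then invoke the openness of the Morse condition on the compact manifold $S^{n-1}$.

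Write $g_k = \epsilon^{4-k} f_k$ for $k=1,\dots,4$, so our collection is $\{g_1,g_2,g_3,g_4\}$. For indices $i<j$, factor out the dominant (i.e.\ larger) power of $\epsilon$:
\[
g_i - g_j \;=\; \epsilon^{4-j}\bigl(\epsilon^{j-i} f_i - f_j\bigr).
\]
Multiplication by a nonzero constant does not affect the Morse condition or the set of critical points, so it suffices to show that the bracketed function $h_{ij} := \epsilon^{j-i} f_i - f_j$ is Morse with exactly two critical points. Since $j-i \geq 1$, we have $\|h_{ij} + f_j\|_{C^2} \leq \epsilon\,\|f_i\|_{C^2}$, so $h_{ij} \to -f_j$ in the $C^2$-topology as $\epsilon\to 0$.

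The conclusion then follows from structural stability of Morse functions on the compact manifold $S^{n-1}$. Concretely, the two critical points of $-f_j$ are non-degenerate, so the implicit function theorem applied to the equation $dh_{ij} = 0$ produces, for $\epsilon$ small, a unique critical point of $h_{ij}$ near each critical point of $-f_j$, and these are again non-degenerate. Away from those two points, $|d(-f_j)|$ is bounded below on the compact complement, so $dh_{ij}$ cannot vanish there for $\epsilon$ small enough. This yields a threshold $\epsilon_{ij}>0$; taking $\epsilon < \min_{i<j}\epsilon_{ij}$ handles all six pairs simultaneously.

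No serious obstacle is expected: the statement is essentially just the stability of the Morse condition under $C^2$-small perturbations. The only mildly nontrivial point is that each difference mixes two different powers of $\epsilon$, but after factoring out the smaller one the problem reduces to a standard perturbation argument. The disjointness of critical points of the $f_k$ assumed in the setup plays no role in this lemma itself, though one expects it to be essential in the subsequent model count of pseudoholomorphic quadrilaterals via Morse flow trees.
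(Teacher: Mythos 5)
Your proof is correct and takes essentially the same approach as the paper's: reduce each pairwise difference to a small perturbation of one of the given Morse functions by factoring out the dominant power of $\epsilon$, then invoke the stability of the Morse condition (and the persistence and non-degeneracy of critical points) on the compact manifold $S^{n-1}$. The paper's version is terser, treating $f_4-\epsilon f_3$ and appealing to ``a simple induction,'' whereas you make the reduction to $h_{ij}=\epsilon^{j-i}f_i-f_j$ explicit for all six pairs; the content is the same.
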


\noindent 
\textit{Proof}. For small enough $\epsilon>0$, $f_4-\epsilon f_3$ is a small perturbation of $f_4$. Since Morse condition is $C^\infty$-stable, $f_4-\epsilon f_3$ is still Morse with 2 critical points. By a simple induction the proof is finished.
\qed

~\\
Denote $f_{a_1'}=\epsilon^3 f_1,\,f_{a_2'}=\epsilon^2 f_2,\,f_{b_1'}=\epsilon f_3,\,f_{b_2'}=f_4$, the gradients of which correspond to the fiber projection of $a_1',a_2',b_1',b_2'$. Let $\check{x}_{ij},\hat{x}_{ij}$ over $x_{ij}$ be the top and bottom critical points of $f_{b_j'}-f_{a_i'}$, $i,j\in\{1,2\}$.

Now we compute the differentials of $\widehat{CF}(\boldsymbol{b'},\boldsymbol{a'})$, which is generated by 8 elements $\{x_{12}^\dag,x_{21}^\dag\}$ and $\{x_{11}^\dag,x_{22}^\dag\}$, where $\dag$ denotes a check or hat.

\begin{lemma}
    The differential of $\widehat{CF}(\boldsymbol{b'},\boldsymbol{a'})$ is given by $\hbar$ times the arrows in Figure \ref{markov-diff}. Moreover, a relative grading by Maslov index is denoted in Figure \ref{markov-diff}.
    \label{lemma-diff}
\end{lemma}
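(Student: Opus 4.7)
The plan is to exploit the product structure of the trivial fibration $\hat{p}\colon\mathbb{C}\times T^*S^{n-1}\to\mathbb{C}$ and reduce the index-one Floer curves to a base-plus-fiber calculation, with the fiber count handled by the Morse gradient tree correspondence of Theorem \ref{Fukaya-Oh}. First, I would analyze the base projection: since the Lagrangians $a_i',b_j'$ project to the four straight lines $\{y=1\},\{y=2\},\{x=1\},\{x=2\}$ meeting at the corners $x_{ij}$ of the square $[1,2]^2$, the only connected holomorphic polygon in $\mathbb{C}$ bounded by these lines with corners at the $x_{ij}$ is the square itself, covered once. By Lemma \ref{lemma-index} with $\kappa=2$, any connected index-one curve then has $\chi=1$ (giving the $\hbar^{\kappa-\chi}=\hbar$ factor asserted in the lemma) and Maslov index $\mu=n-1$, matching the quadrilateral's contribution. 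Disconnected $\chi=2$ curves (a trivial base strip plus a fiber trajectory) would need $\mu=1$, i.e.\ an index-$1$ strip from $\check{x}$ to $\hat{x}$; this only occurs for $n=2$, and there the $S^{n-2}=S^0$-family of trajectories has mod-$2$ count zero, so these contribute nothing.

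Next I would lift the base square to the fiber $T^*S^{n-1}$, where the curve becomes a pseudoholomorphic quadrilateral with boundary on the four graphs $\Gamma_{df_{a_i'}},\Gamma_{df_{b_j'}}$, asymptotic at each corner to the prescribed $\check{}$ or $\hat{}$ critical point of the appropriate pairwise difference. By Theorem \ref{Fukaya-Oh}, these quadrilaterals are in bijection with four-external-vertex Morse gradient trees on $S^{n-1}$ whose edges flow along $-\nabla(f_{l(e)}-f_{r(e)})$. Here the rescaling $\{\epsilon^3 f_1,\epsilon^2 f_2,\epsilon f_3,f_4\}$ is crucial: each pairwise difference is dominated by the larger-weight function, so for small $\epsilon$ the gradient flow along each edge is a small perturbation of $\pm\nabla f_4$ or $\pm\nabla f_3$. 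A cascade argument in the style of Example \ref{ex-tree} then pins down the shape of the tree and the position of its internal vertex for each admissible external labeling.

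I would then run through the eight generators case by case. For each ordered pair $(\boldsymbol{y},\boldsymbol{y}')$ corresponding to a proposed arrow in Figure \ref{markov-diff}, the cascade exhibits a unique gradient tree modulo $2$; for the other pairs, either the dimension count of Lemma \ref{lemma-morse} makes the moduli space empty, or the cascade shows no tree can exist. The relative Maslov grading then follows by reading off the Morse indices of the four external vertices and substituting into Lemma \ref{lemma-index} at $\chi=1$, which determines the arrow degrees in Figure \ref{markov-diff}.

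The main obstacle is the tree enumeration itself: rigorously verifying that for each expected arrow the cascade delivers exactly one rigid gradient tree, that transversality holds under a small generic perturbation, and that no higher-Euler-characteristic or multiply-covered degenerations appear in the Gromov limit as $\epsilon\to 0$, in the spirit of \cite{fukaya1997zero}. In particular, one must check carefully that for $n\geq 3$, where the sphere $S^{n-1}$ is larger, the chain of gradient flows in the cascade does not acquire extra degrees of freedom; this is precisely what the strict hierarchy of $\epsilon$-weights is designed to prevent.
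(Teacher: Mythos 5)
Your overall framework (project to the base to see the square $[1,2]\times[1,2]$ covered once, project to the fiber and convert the quadrilateral count into a count of four-valent Morse gradient trees on $S^{n-1}$ via Theorem \ref{Fukaya-Oh}, then read off the relative grading from the Morse indices together with Lemma \ref{lemma-index}) is the same as the paper's, and your side checks on disconnected $\chi=2$ configurations and on the role of the $\epsilon$-hierarchy are reasonable. However, the central counting step has a genuine gap: you assert that for each arrow the cascade ``delivers exactly one rigid gradient tree.'' It does not. By the very dimension formula of Lemma \ref{lemma-morse} that you invoke, the moduli space of gradient trees with the asymptotics prescribed by an arrow of Figure \ref{markov-diff} has dimension $\sum_v\mathrm{ind}(v)-3(n-1)+1=1$, not $0$: a four-valent ribbon tree resolves into two trivalent vertices joined by an internal edge whose length is a genuine modulus, and Figure \ref{markov-moduli} exhibits exactly this one-parameter family (two combinatorial tree types meeting at internal length zero, each degenerating as the internal length goes to infinity). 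Your claim of rigidity therefore contradicts your own appeal to Lemma \ref{lemma-morse}, and without repairing it the cascade cannot produce a finite count.

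The paper closes the count by a mechanism you are missing: the degree-one base quadrilateral fixes the conformal structure of the domain, i.e.\ the cross ratio of the four boundary punctures, whereas along the one-parameter family of fiber gradient trees this cross ratio sweeps from one degenerate limit (where $q(\check{x}_{21})$ approaches $q(\check{x}_{22})$) to the other (where $q(\check{x}_{12})$ approaches $q(\hat{x}_{22})$). Matching the cross ratio determined by the fiber tree to the one imposed by the base is the codimension-one condition that cuts the one-dimensional family down to a single point, giving the algebraic count $1$ and hence the arrows of Figure \ref{markov-diff}. You do observe that the base square is covered once, but you never couple this to the fiber modulus; that coupling is the essential content of the proof.
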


\begin{figure}[ht]
    \centering
    \includegraphics[height=3.5cm]{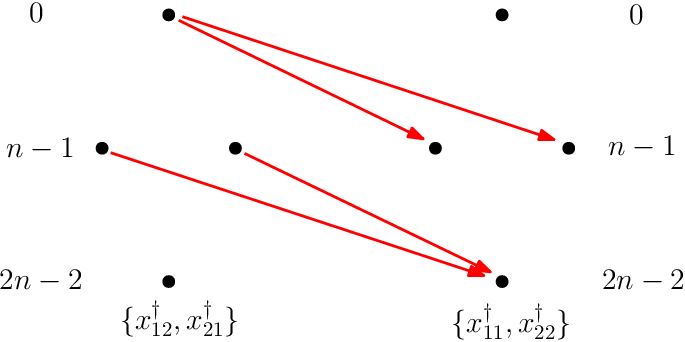}
    \caption{The differentials of $\widehat{CF}(\boldsymbol{b},\boldsymbol{a})$. The generators in the top row have 2 checks, those in the middle row have 1 check and those in the bottom row have no check.}
    \label{markov-diff}
\end{figure}
\begin{proof} 
    Let $u\colon\Dot{F}\to\mathbb{R}\times\left[0,1\right]\times(\mathbb{C}\times T^*S^{n-1})$ be a pseudoholomorphic disk with positive ends $\{{x}^{\dag}_{12},{x}^{\dag}_{21}\}$ and negative ends $\{{x}^{\dag}_{11},{x}^{\dag}_{22}\}$. Suppose the complex structure is split, then its projection to $\mathbb{C}$ is a degree 1 map over $[1,2]\times[1,2]$, which fixes the cross ratio of the 4 punctures on $\partial\Dot{F}$. 
    
    Then we consider the projection of $u$ to the fiber direction $T^*S^{n-1}$, denoted by $w\colon\Dot{F}\to T^*S^{n-1}$. 
    By the construction above $a'_1,a'_2,b'_1,b'_2$ are graphical near $S^{n-1}\subset T^*S^{n-1}$, with respect to Morse functions $f_{a'_i},f_{b'_j}$, $i,j={1,2}$. 
    The domain of the Morse moduli space is shown in Figure \ref{markov-morse}, where inner edges are ignored and arrows denote the direction of $-\nabla(f_\mathrm{right}-f_\mathrm{left})$. 
    
    \begin{figure}[ht]
        \centering
        \includegraphics[height=3.5cm]{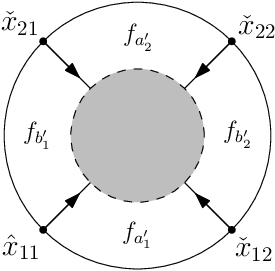}
        \caption{}
        \label{markov-morse}
    \end{figure}

    
    Viewing all boundary vertices as sources of gradient flow, observe that $\check{x}_{ij}$ is of Morse index $n-1$ if $i\neq j$ and 0 if $i=j$; $\hat{x}_{ij}$ is of Morse index $0$ if $i\neq j$ and $n-1$ if $i=j$. By Lemma \ref{lemma-morse}, we can further perturb $f_{a'_i},f_{b'_j}$, $i,j={1,2}$ such that all Morse gradient trees $w$ we consider are transversely cut out, and
    \begin{equation}
        \mathrm{ind}(w)=\left(\#\mathrm{checks\,\,in\,\,}\{x^{\dag}_{12},x^{\dag}_{21}\}+\#\mathrm{hats\,\,in\,\,}\{x^{\dag}_{11},x^{\dag}_{22}\}-3\right)(n-1)+1.
    \label{equation-index}
    \end{equation}
    Therefore, the Morse moduli space with respect to the arrows in Figure \ref{markov-diff} is of $\mathrm{ind}(w)=1$ and the case of gradient tree on $S^{n-1}$ from $\{\check{x}_{12},\check{x}_{21}\}$ to $\{\hat{x}_{11},\check{x}_{22}\}$ is shown in Figure \ref{markov-moduli}.
    
    \begin{figure}[ht]
        \centering
        \includegraphics[height=4.5cm]{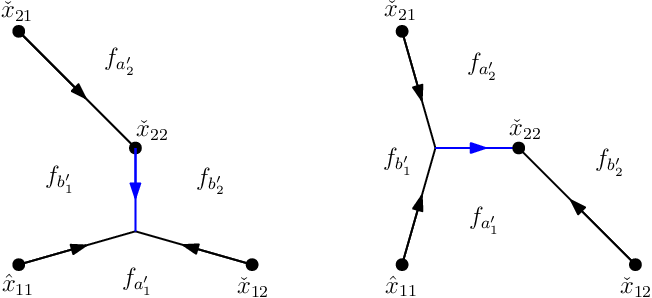}
        \caption{Two possible gradient trees on $S^{n-1}$. The moduli space is parametrized by the length of the inner (blue) edge.}
        \label{markov-moduli}
    \end{figure}
    
    By Theorem \ref{Fukaya-Oh}, the moduli space of pseudoholomorphic curves is diffeomorphic to the Morse moduli space, so we can think of gradient trees instead of pseudoholomorphic disks. Taking the base direction into consideration, one checks that 
    \begin{equation}
        \mathrm{ind}(u)=\mathrm{ind}(w).
        \label{equation-index1}
    \end{equation}
    For example, we still consider the case of Figure \ref{markov-moduli}: The two gradient trees are parametrized by the length of their inner edges. As the inner length tends to zero, the left and right gradient trees tend to the same one. As the inner edge of the left one tends to the bottom generator of $f_{b'_2}-f_{b'_1}$, its length tends to infinity and $q(\check{x}_{21})$ approaches $q(\check{x}_{22})$. Similarly, as the inner edge of the right one tends to the top generator of $f_{a'_2}-f_{a'_1}$, its length tends to infinity and $q(\check{x}_{12})$ approaches $q(\hat{x}_{22})$. 
    Since the cross ratio on the domain is fixed by the base direction, the result is that the algebraic count of $w$ is one. This verifies $\mathrm{ind}(u)=1$ and the arrows in Figure \ref{markov-diff}.
    
    If we set $\{\check{x}_{12},\check{x}_{21}\}$ to be of grading 0, we can verify the relative grading of generators in Figure \ref{markov-diff} by Lemma \ref{lemma-index}, (\ref{equation-index}), (\ref{equation-index1}) and the convention that $|\hbar|=2-n$, where the difference of grading is given by the Maslov index.
\end{proof}

\section{Invariance under Markov stabilization}
\label{section-markov}

A Markov stabilization is shown in Figure \ref{markov}: $\sigma$ is a $\kappa$-strand braid which intersects $D$ along $\boldsymbol{z}=\{z_1,\dots,z_k\}$. On the base $D$, $\sigma$ is viewed as an element of $\mathrm{Diff^+}(D,\partial D,z)$, which restricts to identity near $\gamma_0$. Without loss of generality, we construct a positive Markov stabilization between $\gamma_0$ and $\gamma_1$: Let $c$ be an arc from $z_0$ to $z_1$ which is disjoint from other $\gamma_j$, perform a positive half twist along $c$, then we get a $(\kappa+1)$-strand braid given by $\sigma\circ\sigma_c$.

Now we consider the fiber and Lagrangians. Let $p'\colon W'\to D$ be the standard Lefschetz fibration with regular fiber $T^*S^{n-1}$ and critical values $\boldsymbol{z'}=\{z_0,\dots,z_\kappa\}$ and $p\colon W\to D-N(\gamma_0)$ be its restriction to $D-N(\gamma_0)$. 
Let $a_j$ denote the Lagrangian thimble over $\gamma_j$.
Let $h_\sigma$ be an element of $\mathrm{Symp}(W,\partial W)$ which descends to $\sigma$ and $h'_\sigma\in \mathrm{Symp}(W',\partial W')$ be its extension to $W'$ by identity. 
Finally, let $\tau_c\in \mathrm{Symp}(W',\partial W')$ be the Dehn twist along the Lagrangian sphere over $c$. 

\begin{figure}[ht]
    \centering
    \includegraphics[width=12cm]{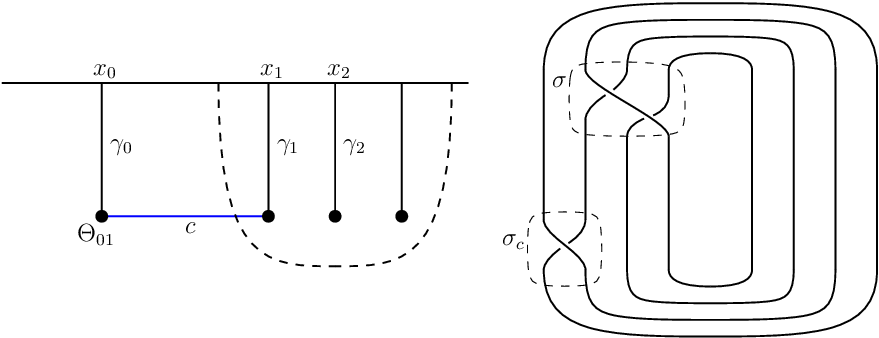}
    \caption{Markov stabilization along $c$.}
    \label{markov}
\end{figure}

\begin{figure}[ht]
    \centering
    \includegraphics[height=3cm]{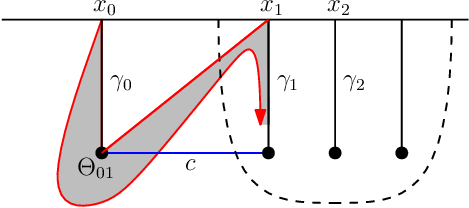}
    \caption{The red half-arcs are $\sigma\circ\sigma_c(\gamma_0)$ and $\sigma\circ\sigma_c(\gamma_1)$. The shaded region denotes the curve we are gluing.}
    \label{markov2}
\end{figure}

The proof of invariance under Markov stabilization is the same as Theorem 9.4.2 of \cite{colin2020applications}, and we briefly restate its proof here:

\begin{theorem}
    $\widehat{CF}(W,h_\sigma(\boldsymbol{a}),\boldsymbol{a})$ and $\widehat{CF}(W',h'_\sigma\circ\tau_c(\boldsymbol{a}'),\boldsymbol{a}')$ are isomorphic cochain complexes for specific choices of almost complex structure and $h_\sigma(\boldsymbol{a})$ and $h'_\sigma\circ\tau_c(\boldsymbol{a}')$ after a Hamiltonian isotopy.
\end{theorem}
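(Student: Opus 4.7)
Following the template of Theorem 9.4.2 in \cite{colin2020applications}, I would realize the desired isomorphism as an on-the-nose identification of generators coupled with a differential-matching argument that reduces every new pseudoholomorphic curve to either an old one or a local one governed by Section \ref{section-model}.

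First I would perform a Hamiltonian isotopy of $h'_\sigma\circ\tau_c(\boldsymbol{a}')$ supported in a small neighborhood $N(c)\subset D$ of the stabilization arc $c$, so that over $N(c)$ the four relevant Lagrangians $\tau_c(a_0'),\tau_c(a_1'),a_0',a_1'$ agree, in the local Lefschetz model $\mathbb{C}\times T^*S^{n-1}$, with the quadrilateral configuration $(b_1',b_2',a_1',a_2')$ of Section \ref{section-model}. Outside $N(c)$ the map $\tau_c$ is the identity and $h'_\sigma$ restricts to $h_\sigma$, so intersections of $h'_\sigma\circ\tau_c(\boldsymbol{a}')$ with $\boldsymbol{a}'$ away from $N(c)$ are canonically in bijection with intersections of $h_\sigma(\boldsymbol{a})$ with $\boldsymbol{a}$, and the only new intersections sit over $N(c)$ and are modeled by the eight points $\{\check{x}_{ij},\hat{x}_{ij}\}_{i,j\in\{0,1\}}$.

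The bijection on generators is then combinatorial: every $(\kappa+1)$-tuple $\boldsymbol{y}'$ splits uniquely as a pair $(\boldsymbol{y},x)$ with $\boldsymbol{y}$ a $\kappa$-tuple outside $N(c)$, which we read as a generator of the pre-stabilization complex, and $x$ one of the eight local points, and the permutation constraint on a valid $(\kappa+1)$-tuple rules out the "off-diagonal" local choices. A further action computation (carried out by the isotopy) then singles out a single canonical survivor $x=x_{\star}$, giving the identification $\boldsymbol{y}\leftrightarrow(\boldsymbol{y},x_{\star})$. To match differentials, I would neck-stretch the almost complex structure along a contact hypersurface separating $p'^{-1}(N(c))$ from its complement, so that every finite-energy index-$1$ curve degenerates into an outer level over $D-N(c)$, reproducing the differential of $\widehat{CF}(W,h_\sigma(\boldsymbol{a}),\boldsymbol{a})$, glued along Reeb chords to a local level in $p'^{-1}(N(c))$ counted by Lemma \ref{lemma-diff}. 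Under the identification above the only contributing local piece is the trivial strip at $x_{\star}$, and the two differentials therefore agree.

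The main obstacle will be the degeneration analysis: one must show that no index-$1$ curve contributes a nontrivial local piece aside from the trivial strip, and that no index-$1$ curve wanders between the two regions in a way that violates the above decomposition. As in Lemma \ref{lemma-cycle}, this should follow from the parity of $\chi$ combined with the index formula of Lemma \ref{lemma-index}, which forces integer indices only in the admissible dimensions $n=2$ and $n>3$, thereby explaining why the case $n=3$ is again excluded. A secondary technical point is to choose the almost complex structure so as to be simultaneously split near the model region (for Theorem \ref{Fukaya-Oh} and Lemma \ref{lemma-diff} to apply) and regular for the full cylindrical construction; this is by standard patching of generic perturbations away from the model.
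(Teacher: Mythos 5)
Your overall strategy (localize near the stabilization arc, use the model of Section \ref{section-model}, match differentials by a degeneration/gluing argument) is in the right spirit, but two of your concrete steps do not survive contact with the actual geometry. First, the generator identification is not given by a ``single canonical survivor'' $x_\star$. Because $h'_\sigma\circ\tau_c(a_0)$ is arranged to run over the same arc as $h_\sigma(a_1)$ near $\gamma_1$, the new strand can pair up in two essentially different ways: a $(\kappa+1)$-tuple of $\widehat{CF}(W',h'_\sigma\circ\tau_c(\boldsymbol{a}'),\boldsymbol{a}')$ contains either the pair $\{x_0,x_1\}$ or the single point $\Theta_{01}$, and the bijection with the old complex is correspondingly case-based: $\{x_1\}\cup\boldsymbol{y}'\mapsto\{x_0,x_1\}\cup\boldsymbol{y}'$ for old generators containing $x_1$, and $\boldsymbol{y}\mapsto\{\Theta_{01}\}\cup\boldsymbol{y}$ for those that do not. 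Your decomposition of every new generator as $(\boldsymbol{y},x_\star)$ with $\boldsymbol{y}$ supported away from $N(c)$ is not even well defined, since old generators themselves use the point $x_1$ adjacent to the stabilization region.

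Second, and more seriously, the claim that ``the only contributing local piece is the trivial strip at $x_\star$'' would make the induced map fail to be a cochain map. The differential of the stabilized complex has a nontrivial cross-term from generators of type $\{\Theta_{01}\}\cup\boldsymbol{y}_1$ to generators of type $\{x_0,x_1\}\cup\boldsymbol{y}_2'$; these curves are matched with old curves ending at $x_1$ precisely by gluing on the quadrilateral over the shaded region of Figure \ref{markov2}, whose algebraic count is $1$ by Lemma \ref{lemma-diff}. This nontrivial local count is the entire reason Section \ref{section-model} is needed in the Markov argument; an argument that excludes all nontrivial local pieces proves too much and cannot be repaired by the index-parity considerations you cite. (Relatedly, the parity argument of Lemma \ref{lemma-cycle} and the exclusion of $n=3$ belong to the arc-slide invariance, not to Markov stabilization, whose proof does not distinguish these dimensions.) What does survive from your plan is the treatment of the ``diagonal'' terms: curves between two generators both containing $\{x_0,x_1\}$, or both containing $\Theta_{01}$, are indeed in bijection with old curves because the extra strip is trivial, and there are no curves in the remaining off-diagonal direction.
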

\begin{proof}
    We directly construct a homomorphism $\Phi_s:\widehat{CF}(W,h_\sigma(\boldsymbol{a}),\boldsymbol{a})\to\widehat{CF}(W',h'_\sigma\circ\tau_c(\boldsymbol{a}'),\boldsymbol{a}')$ and show it is a cochain isomorphism. The notations are as in Figure \ref{markov2}.
    
    Consider the $\kappa$-tuples in $\widehat{CF}(W,h_\sigma(\boldsymbol{a}),\boldsymbol{a})$: It may or may not contain $\{x_1\}$. Similarly, the $\kappa$-tuples in $\widehat{CF}(W',h'_\sigma\circ\tau_c(\boldsymbol{a}'),\boldsymbol{a}')$ may contain either $\{x_0,x_1\}$ or $\{\Theta_{01}\}$. In fact there is a linear isomorphism:
    \begin{gather*}
        \Phi_s\colon\widehat{CF}(W,h_\sigma(\boldsymbol{a}),\boldsymbol{a})\to\widehat{CF}(W',h'_\sigma\circ\tau_c(\boldsymbol{a}'),\boldsymbol{a}'),\\
        \{x_1\}\cup\boldsymbol{y}'\mapsto\{x_0,x_1\}\cup\boldsymbol{y}',\,\,\boldsymbol{y}\mapsto\{\Theta_{01}\}\cup\boldsymbol{y},
    \end{gather*}
    
    For convenience of gluing below, we put $h_\sigma(a_1)$ and $h'_\sigma\circ\tau_c(a_0)$ in a position so that they go over the same arc near $\gamma_1$.
    Let $\gamma_1=\{\mathrm{Re}\,z_1\}\times[-1,0]\subset D$ and $\epsilon>0$ be small. The projections $\sigma(\gamma_1)$ and $\sigma\circ\sigma_c(\gamma_0)$ are written as $\zeta_1\cup\zeta_2\cup\zeta_3$ and $\zeta'_1\cup\zeta_2\cup\zeta_3$. The main requirement is that $\zeta_2$ be a common neck, for example, set $\zeta_2=\{\mathrm{Re}\,z_1-\epsilon\}\times[-2/3,-1/3]$. 
    Refer to Figure \ref{markov2} for details.
    
    We compare the differential of $\widehat{CF}(W',h'_\sigma\circ\tau_c(\boldsymbol{a}'),\boldsymbol{a}')$ and $\widehat{CF}(W,h_\sigma(\boldsymbol{a}),\boldsymbol{a})$. If $u'$ goes from $\{x_0,x_1\}\cup{y_1'}$ to $\{x_0,x_1\}\cup{y_2'}$, then $u'$ is in bijection with $u$ that goes from $\{x_1\}\cup{y_1'}$ to $\{x_1\}\cup{y_2'}$ since the strip from $x_0$ to $x_0$ is trivial. 
    Similarly, $u'$ that goes from $\{\Theta_{01}\}\cup{y_1}$ to $\{\Theta_{01}\}\cup{y_2}$ is in bijection with $u$ that goes from ${y_1}$ to ${y_2}$. There are no curves from $\{x_0,x_1\}\cup{y_1'}$ to $\{\Theta_{01}\}\cup{y_2}$ and no curves from $\{x_1\}\cup{y_1'}$ to ${y_2}$. The nontrivial case is that, if $u'$ goes from $\{\Theta_{01}\}\cup{y_1}$ to $\{x_0,x_1\}\cup{y_2'}$, then it is in bijection with $u$ that goes from ${y_1}$ to $\{x_0\}\cup{y_2'}$, where $u'$ comes from $u$ by replacing the end containing $x_1$ by the shaded region in Figure \ref{markov2}. 
    The bijection comes from Lemma \ref{lemma-diff} which says that the curve over the shaded region has algebraic count 1. The gluing details are omitted.
\end{proof}

\section{Examples}
\label{section-example}
In this section we consider some simple links and compute their cohomology groups $Kh^\sharp(\widehat{\sigma})$ in the sense of Theorem \ref{main-thm}, with coefficient ring $\mathbb{F}[\mathcal{A}]\llbracket\hbar,\hbar^{-1}]$ when $n=2$ and $\mathbb{F}\llbracket\hbar,\hbar^{-1}]$ when $n>3$, where $\hbar$ is of degree $2-n$. We assume that $\mathbb{F}$ is of characteristic 2 for simplicity.

\subsection{Unknots}
Figure \ref{unknot} shows the 2-strand braid representation of an unknot. In the Morse-Bott family of Reeb chords, $x_1,x_2$ are viewed as longer Reeb chords (top generators) and $z_1,z_2$ are viewed as shorter Reeb chords (bottom generators). The only possible differential in $\widehat{CF}(\widetilde{W},\widetilde{h}_{\sigma_{\mathrm{unknot}}} (\widetilde{\boldsymbol{a}}),\widetilde{\boldsymbol{a}})$ counts quadrilaterals $u$ with $\{x_1,x_2\}$ at positive ends and $\{z_1,z_2\}$ at negative ends. The projection of $u$ to $D$ has degree 1 over the region bounded by the loop $x_1\to z_2\to x_2\to z_1\to x_1$ and degree 0 over its complement. 

\begin{figure}[ht]
    \centering
    \includegraphics[height=3.6cm]{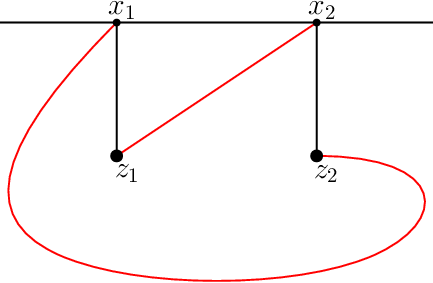}
    \caption{The braid representation of an unknot on $D$.}
    \label{unknot}
\end{figure}

\begin{proposition}
    $Kh^\sharp(\widehat{\sigma}_{\mathrm{unknot}})$ is freely generated by $\{x_1,x_2\}$ and $\{z_1,z_2\}$, where the difference of grading between these two generators is $2$ (mod $n-2$):
    
    \begin{center}
    \begin{tabular}{ |c|c| } 
        \hline
        generators & grading\\
        \hline
        $\{x_1,x_2\}$ & $0$ \\ 
        $\{z_1,z_2\}$ & $2$\\
        \hline
    \end{tabular}
    \end{center}
\end{proposition}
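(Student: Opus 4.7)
The plan is to enumerate the two generators of $\widehat{CF}(\widetilde W,\widetilde h_{\sigma_{\mathrm{unknot}}}(\widetilde{\boldsymbol a}),\widetilde{\boldsymbol a})$, compute the relative grading between them from a Maslov-index calculation, and then observe that this grading forces the only possibly nonzero matrix coefficient of $d$ to vanish.

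First I would read off the generators from Figure \ref{unknot}: after a small Morse-Bott perturbation of the clean $S^{n-1}$-intersection loci of $\widetilde h_{\sigma_{\mathrm{unknot}}}(\widetilde{\boldsymbol a})\cap\widetilde{\boldsymbol a}$, the two longer Reeb chords $x_1,x_2$ and the two shorter Reeb chords $z_1,z_2$ become transverse intersection points, and the matching structure of the braid $\sigma_{\mathrm{unknot}}$ leaves precisely $\{x_1,x_2\}$ and $\{z_1,z_2\}$ as the admissible $2$-tuples indexed by a permutation of $\{1,2\}$. Hence $\widehat{CF}$ is the free module on these two generators over the coefficient ring.

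Second, I would compute the relative grading. Each $x_i$ is the top and each $z_i$ the bottom critical point of the Morse function on $S^{n-1}$ that perturbs the clean intersection, so the Morse-theoretic contribution satisfies $|x_i|-|z_i|=n-1$. Summing over the two components of the tuple gives
\begin{equation*}
    |\{x_1,x_2\}|-|\{z_1,z_2\}|=2(n-1)\equiv 2\pmod{n-2}.
\end{equation*}
Equivalently, this can be extracted from Lemma \ref{lemma-index} applied to a hypothetical connecting curve, together with the index bookkeeping of Section \ref{section-model}.

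Finally, the passage preceding the proposition already observes that the only possibly nonzero matrix coefficient of $d$ is $\langle d\{x_1,x_2\},\{z_1,z_2\}\rangle$, counted by pseudoholomorphic quadrilaterals whose projection to $D$ has degree one over the loop $x_1\to z_2\to x_2\to z_1\to x_1$. But every nonzero term $\hbar^{\kappa-\chi}\boldsymbol y'$ contributing to $d\boldsymbol y$ must raise grading by $1$, and since $|\hbar|=2-n\equiv 0\pmod{n-2}$ this forces the source-to-target grading difference of any contributing matrix coefficient to be $\equiv 1\pmod{n-2}$. The unknot grading difference is $2\pmod{n-2}$, and $1\not\equiv 2\pmod{n-2}$ whenever $n=2$ or $n>3$. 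Hence the matrix coefficient vanishes by grading reasons alone, $d=0$, and both $\{x_1,x_2\}$ and $\{z_1,z_2\}$ survive to cohomology with the asserted gradings.

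The main obstacle, modest in this case, is correctly identifying that the two-generator complex contains no ``mixed'' tuples $\{x_i,z_j\}$: once the geometry of Figure \ref{unknot} and the permutation structure of $\sigma_{\mathrm{unknot}}$ rule these out, the vanishing of the differential is forced by the degree mismatch, with no explicit curve counting required.
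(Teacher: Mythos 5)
Your proposal is correct and follows essentially the same route as the paper: the paper likewise identifies $\{x_1,x_2\}$ and $\{z_1,z_2\}$ as the only generators, computes via Lemma \ref{lemma-index} that the connecting quadrilateral has Maslov index $2n-2$ and Fredholm index $n\neq 1$, and concludes that both tuples are cocycles. Your mod-$(n-2)$ grading formulation is just a uniform repackaging of that index computation (with the added virtue of handling all Euler characteristics at once and making transparent why $n=3$ is the excluded case).
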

\begin{proof}
    Similar to the proof of Lemma \ref{lemma-diff}, we check that the pseudoholomorphic disk $u$ with $\{x_1,x_2\}$ at positive ends and $\{z_1,z_2\}$ at negative ends is of Fredholm index $n$ and Maslov index $2n-2$. Therefore, $\{x_1,x_2\}$ and $\{z_1,z_2\}$ are both cocycles and hence generators of $Kh^\sharp(\widehat{\sigma}_{\mathrm{unknot}})$.
\end{proof}

\subsection{Hopf links}
We then consider the 2-strand braid representation of a left-handed Hopf link as the left side of Figure \ref{hopf-link}.
`$A$', `$B$' and `$C$' denote the corresponding regions on the base $D$. For example, we use `$A+B$' to represent the union of region $`A\textrm'$ and $`B\textrm'$.

\begin{figure}[ht]
    \centering
    \includegraphics[height=4cm]{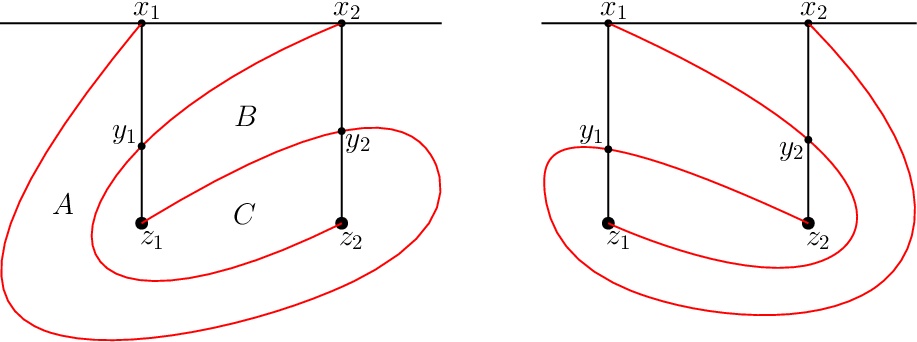}
    \caption{The braid representation of the left-handed Hopf link (left) and the right-handed Hopf link (right).}
    \label{hopf-link}
\end{figure}

\begin{lemma}
    \label{lemma-hopf}
    $\widehat{CF}(\widetilde{W},\widetilde{h}_{\sigma_\mathrm{left\,Hopf}}(\widetilde{\boldsymbol{a}}),\widetilde{\boldsymbol{a}})$ contains the following (mod 2) differential relations:
    \begin{align}
        d&\{x_1,z_2\}=\hbar\{\hat{y}_1,\check{y}_2\}+\hbar\{\check{y}_1,\hat{y}_2\},\label{diff-relation}\\
        d&\{x_2,z_1\}=\hbar\{\hat{y}_1,\check{y}_2\}+\hbar\{\check{y}_1,\hat{y}_2\},\label{diff-relation'}\\
        d&\{\check{y}_1,\hat{y}_2\}=\hbar\{z_1,z_2\},\label{diff-relation1}\\
        d&\{\hat{y}_1,\check{y}_2\}=\hbar\{z_1,z_2\}.\label{diff-relation2}
    \end{align}
\end{lemma}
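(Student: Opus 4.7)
The plan is to identify, for each of the four stated differentials, the pseudoholomorphic curves that can contribute by their domain (projection to $D$) and then reduce the count in each case to the quadrilateral model calculation of Lemma \ref{lemma-diff}. First I would pin down all intersection points in Figure \ref{hopf-link}: writing $\widetilde{h}_{\sigma_\mathrm{left\,Hopf}}(\widetilde{\boldsymbol{a}})$ on top of $\widetilde{\boldsymbol{a}}$, the two crossings produce Morse-Bott $S^{n-1}$-families that, after a small fiber-direction Hamiltonian perturbation, split into top and bottom generators $\check y_i,\hat y_i$, while the transverse intersections away from the crossings give $x_i$ and $z_i$. Then for each proposed differential I would enumerate the candidate base domains compatible with the prescribed asymptotics and with $\mathrm{ind}(u)=1$, using Lemma \ref{lemma-index}.

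For (\ref{diff-relation}) and (\ref{diff-relation'}), I expect the contributing curves to be quadrilaterals with $\chi=\kappa-1=1$ (accounting for the single $\hbar$ factor) whose base projection covers one of the regions $A$, $B$, $C$ with multiplicity one. Over such a region the Lefschetz fibration is, up to symplectic isotopy, a trivial cylindrical piece over a rectangle, so after the fiber perturbation the counting problem is precisely that of Lemma \ref{lemma-diff} with the specified Morse data. The two summands $\hbar\{\hat y_1,\check y_2\}$ and $\hbar\{\check y_1,\hat y_2\}$ then correspond to the two index $1$ flow trees on $S^{n-1}$ that one sees in Figure \ref{markov-moduli}, each counted once mod $2$ by Theorem \ref{Fukaya-Oh}.

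For (\ref{diff-relation1}) and (\ref{diff-relation2}), the curves from $\{\check y_1,\hat y_2\}$ or $\{\hat y_1,\check y_2\}$ to $\{z_1,z_2\}$ again have $\chi=1$ with base projection covering a single adjacent region, and the local situation matches exactly the passage from the middle row to the bottom row of Figure \ref{markov-diff} in Lemma \ref{lemma-diff}. By the same Morse flow tree count in the fiber, each such differential contributes $\hbar\{z_1,z_2\}$ with algebraic count $1 \pmod 2$.

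The main obstacle will be ruling out stray contributions from domains that do not fit the local model: in principle a quadrilateral whose base projection covers $A\cup B$ or $B\cup C$, or a higher-genus domain, could have the right asymptotics. To exclude these I would compute Maslov indices explicitly using the Lefschetz critical values inside each region (the same type of argument as in the derivation of (\ref{ind-formula-trivial})-(\ref{ind-formula-nontrivial})), and check that for $n=2$ or $n>3$ no other pair (domain, $\chi$) yields $\mathrm{ind}=1$ with the prescribed endpoints. Once the candidate domains are confined to the single-region quadrilaterals, the four equalities follow directly from the model calculation of Section \ref{section-model}.
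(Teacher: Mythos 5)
Your proposal follows essentially the same route as the paper: enumerate the admissible base domains for each pair of asymptotics (the single regions $A$, $B$, $C$ for the four listed differentials), reduce each to the quadrilateral model calculation of Lemma \ref{lemma-diff} via the flow-tree count, and exclude other domains by an index computation. The paper carries out the exclusion step concretely by listing the four possible boundary loops and computing the Fredholm index over the region $A+B+C$ (which is never $1$ for any check/hat decoration), which is exactly the check you defer to "would compute Maslov indices explicitly."
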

\begin{proof}
    Note that $x_1,x_2,z_1,z_2$ should be viewed as top generators at positive ends or as bottom generators at negative ends. Specifically they give no constraints on the Morse-Bott family. 
    
    All nontrivial index 1 curves with count 1 (mod 2) are listed as follows, where we label the regions with positive weights after projection to the base $D$:
    \begin{center}
        \begin{tabular}{ |c|c| } 
         \hline
         regions & ends of generators\\
         \hline
         $A$ & $\{x_1,z_2\}\to\{\check{y}_1,\hat{y}_2\}/\{\hat{y}_1,\check{y}_2\}$\\ 
         $B$ & $\{x_2,z_1\}\to\{\check{y}_1,\hat{y}_2\}/\{\hat{y}_1,\check{y}_2\}$\\ 
         $C$ & $\{\check{y}_1,\hat{y}_1\}/\{\hat{y}_1,\check{y}_1\}\to\{z_1,z_2\}$\\
         \hline
        \end{tabular}
    \end{center}
    
    To see this, after projection to the base, the domain with positive weights is bounded by one of the following loops:
    \begin{enumerate}
        \item $y_1\to x_1\to y_2\to z_2\to y_1$. This corresponds to region `$A$', where $u$ is a quadrilateral with $\{x_1,z_2\}$ at positive ends and $\{\check{y}_1,\hat{y}_2\}$ or $\{\hat{y}_1,\check{y}_2\}$ at negative ends. By Lemma \ref{lemma-diff}, $\mathrm{ind}(u)=1$ and we get (\ref{diff-relation}).
        
        \item $y_1\to z_1\to y_2\to x_2\to y_1$. This corresponds to region `$B$', where $u$ is a quadrilateral with $\{x_2,z_1\}$ at positive ends and $\{\check{y}_1,\hat{y}_2\}$ or $\{\hat{y}_1,\check{y}_2\}$ at negative ends. This is similar to (1). Therefore, $\mathrm{ind}(u)=1$ and we get (\ref{diff-relation'}).
        
        \item $y_1\to z_2\to y_2\to z_1\to y_1$. This corresponds to region `$C$', where $u$ is a quadrilateral with $\{z_1,z_2\}$ at negative ends. 
        Since $z_1,z_2$ are bottom generators, we need one check and one hat at positive ends due to Lemma \ref{lemma-diff} so that there is a nontrivial count of $u$. Therefore we get (\ref{diff-relation1}) and (\ref{diff-relation2}).
        
        \item $y_1\to x_1\to y_2\to x_2\to y_1$. This corresponds to region `$A+B+C$', where $u$ is a quadrilateral with $\{x_1,x_2\}$ at positive ends and $\{{y}_1,{y}_2\}$ at negative ends. $x_1,x_2$ are viewed as top generators and there are two critical points inside the domain. We check that $\mathrm{ind}(u)=n$ for $\{\check{y}_1,\check{y}_2\}$ at negative ends; $\mathrm{ind}(u)=2n-1$ for $\{\hat{y}_1,\check{y}_2\}$ and $\{\check{y}_1,\hat{y}_2\}$ at negative ends; $\mathrm{ind}(u)=3n-2$ for $\{\hat{y}_1,\hat{y}_2\}$ at negative ends. Therefore there is no such $u$ with index 1.
    \end{enumerate}

\end{proof}

\begin{corollary}
    If we set $|\{x_1,x_2\}|=0$, then $Kh^\sharp(\widehat{\sigma}_{\mathrm{left\,Hopf}})$ is freely generated by the following generators with the corresponding relative grading (mod $n-2$):
    \begin{center}
    \begin{tabular}{ |c|c| } 
     \hline
     generators & grading\\
     \hline
     $\{x_1,x_2\}$ & $0$ \\ 
     $\{x_2,z_1\}+\{x_1,z_2\}$ & $2$\\
     $\{\check{y}_1,\check{y}_2\}$ & $2$ \\ 
     $\{\hat{y}_1,\hat{y}_2\}$ & $4$ \\ 
     \hline
    \end{tabular}
    \end{center}
\end{corollary}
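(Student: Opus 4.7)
The plan is to read the cohomology directly off Lemma \ref{lemma-hopf} and then fix the relative grading using the index formula in Lemma \ref{lemma-index}. First I would enumerate the generators of $\widehat{CF}(\widetilde{W},\widetilde{h}_{\sigma_\mathrm{left\,Hopf}}(\widetilde{\boldsymbol{a}}),\widetilde{\boldsymbol{a}})$: the four tuples $\{x_1,x_2\}$, $\{x_1,z_2\}$, $\{x_2,z_1\}$, $\{z_1,z_2\}$ coming from the permutation $\beta=\mathrm{id}$, together with the four ``diagonal'' tuples $\{\check{y}_1,\check{y}_2\}$, $\{\check{y}_1,\hat{y}_2\}$, $\{\hat{y}_1,\check{y}_2\}$, $\{\hat{y}_1,\hat{y}_2\}$ coming from $\beta=(12)$. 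The region enumeration carried out in the proof of Lemma \ref{lemma-hopf} shows that the relations (\ref{diff-relation})--(\ref{diff-relation2}) exhaust all nonzero contributions to $d$, so every other generator is automatically a cocycle.

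In characteristic $2$, summing (\ref{diff-relation}) with (\ref{diff-relation'}) and (\ref{diff-relation1}) with (\ref{diff-relation2}) shows that $\{x_1,z_2\}+\{x_2,z_1\}$ and $\{\check{y}_1,\hat{y}_2\}+\{\hat{y}_1,\check{y}_2\}$ are cocycles. Thus the space of cocycles is spanned over $\mathbb{F}\llbracket\hbar,\hbar^{-1}]$ by $\{x_1,x_2\}$, $\{x_1,z_2\}+\{x_2,z_1\}$, $\{\check{y}_1,\check{y}_2\}$, $\{\hat{y}_1,\hat{y}_2\}$, $\{\check{y}_1,\hat{y}_2\}+\{\hat{y}_1,\check{y}_2\}$ and $\{z_1,z_2\}$, while the coboundaries are generated by $\hbar(\{\check{y}_1,\hat{y}_2\}+\{\hat{y}_1,\check{y}_2\})$ and $\hbar\{z_1,z_2\}$. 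Since $\hbar$ is a unit in the coefficient ring, these relations kill the last two cocycles in cohomology, leaving precisely the four generators in the table.

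For the gradings, combine Lemma \ref{lemma-index} (with $\kappa=2$, $\chi=1$) and $|\hbar|=2-n$: a pseudoholomorphic quadrilateral from $y$ to $y'$ of Fredholm index $k$ forces $|y'|\equiv|y|+k\pmod{n-2}$. The four index-$1$ relations (\ref{diff-relation})--(\ref{diff-relation2}) give the internal shifts $|\{\hat{y}_1,\check{y}_2\}|\equiv|\{x_1,z_2\}|+1$ and $|\{z_1,z_2\}|\equiv|\{\check{y}_1,\hat{y}_2\}|+1$ modulo $n-2$. To anchor these against $\{x_1,x_2\}$, I would invoke case $(4)$ of the proof of Lemma \ref{lemma-hopf}, which produces quadrilaterals from $\{x_1,x_2\}$ to $\{\check{y}_1,\check{y}_2\}$, $\{\check{y}_1,\hat{y}_2\}$ (resp.\ $\{\hat{y}_1,\check{y}_2\}$) and $\{\hat{y}_1,\hat{y}_2\}$ of Fredholm indices $n$, $2n-1$ and $3n-2$. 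These are congruent to $2$, $3$ and $4$ modulo $n-2$, and setting $|\{x_1,x_2\}|=0$ then recovers all four entries of the table.

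The main obstacle is not the cohomology calculation itself but this grading comparison: since $\{x_1,x_2\}$ is not connected to any $\check{y}$ or $\hat{y}$ generator by an index-$1$ differential, the anchor relies on the higher-index case-$(4)$ curves. Making their grading contribution rigorous requires choosing Maslov potentials on the Lagrangian $\kappa$-tuples compatibly across the two permutations $\beta=\mathrm{id}$ and $\beta=(12)$; modulo this routine bookkeeping the corollary follows directly.
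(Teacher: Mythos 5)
Your proposal is correct and follows the route the paper intends: the corollary is read off directly from Lemma \ref{lemma-hopf} by computing cocycles modulo coboundaries (using that $\hbar$ is a unit), and the gradings are anchored via the index-$1$ relations together with the higher-index curves of case $(4)$ of that lemma's proof, exactly as you describe. Your closing remark about compatibly normalizing the relative grading across the two permutation classes is the same bookkeeping the paper leaves implicit.
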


~\\
The computation for the right-handed Hopf link is similar. As shown in Figure \ref{hopf-link}, the braid representation of the right-handed Hopf link is the mirror of the left-handed one. Note that there is a bijection of curves between these two Hopf links: Each curve in the left-handed moduli space corresponds to a curve in the right-handed one, where the positive and negative ends are exchanged, as well as the checks and hats. As a consequence, the grading by Maslov indices is also reversed. Specifically, the right-handed Hopf link satisfies:

\begin{lemma}
    $\widehat{CF}(\widetilde{W},\widetilde{h}_{\sigma_\mathrm{right\,Hopf}}(\widetilde{\boldsymbol{a}}),\widetilde{\boldsymbol{a}})$ contains the following (mod 2) differential relations:
    \begin{align*}
        d&\{\check{y}_1,\hat{y}_2\}=\hbar\{x_1,z_2\}+\hbar\{x_2,z_1\},\\
        d&\{\hat{y}_1,\check{y}_2\}=\hbar\{x_1,z_2\}+\hbar\{x_2,z_1\},\\
        d&\{z_1,z_2\}=\hbar\{\check{y}_1,\hat{y}_2\}+\hbar\{\hat{y}_1,\check{y}_2\}.
    \end{align*}
\end{lemma}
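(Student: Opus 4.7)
The plan is to exploit the mirror symmetry between the left-handed and right-handed Hopf braid representations shown in Figure \ref{hopf-link} and translate Lemma \ref{lemma-hopf} directly. The right-handed braid is obtained from the left-handed one by reflection, which induces an orientation-reversing diffeomorphism of the base $D$ exchanging the roles of the three relevant regions $A$, $B$, $C$. Equivalently, at the level of the target $\mathbb{R}\times[0,1]\times\widetilde{W}$, reversing the $\mathbb{R}$-direction converts a $J^\lozenge$-holomorphic curve $u$ for the left-handed complex into a curve $\bar{u}$ for the right-handed complex, interchanging positive and negative ends and swapping the top/bottom (check/hat) assignments on the Morse--Bott $S^{n-1}$-families of Reeb chords. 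I would first make this bijection precise, using that the mod $2$ count is independent of the generic perturbation $J^\lozenge$, so one may choose it to be compatible with the reflection symmetry.

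Next I would apply the bijection to each of the three families of index $1$ curves enumerated in the proof of Lemma \ref{lemma-hopf}. Region-$A$ curves from $\{x_1,z_2\}$ to $\{\check{y}_1,\hat{y}_2\}$ and to $\{\hat{y}_1,\check{y}_2\}$ become right-handed curves from $\{\hat{y}_1,\check{y}_2\}$ and $\{\check{y}_1,\hat{y}_2\}$ respectively down to $\{x_1,z_2\}$, contributing $\hbar\{x_1,z_2\}$ to each of $d\{\check{y}_1,\hat{y}_2\}$ and $d\{\hat{y}_1,\check{y}_2\}$. Region-$B$ curves analogously contribute $\hbar\{x_2,z_1\}$ to the same two differentials. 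Region-$C$ curves from $\{\check{y}_1,\hat{y}_2\}$ and $\{\hat{y}_1,\check{y}_2\}$ to $\{z_1,z_2\}$ dualize to curves from $\{z_1,z_2\}$ down to $\{\hat{y}_1,\check{y}_2\}$ and $\{\check{y}_1,\hat{y}_2\}$, producing $d\{z_1,z_2\}=\hbar\{\hat{y}_1,\check{y}_2\}+\hbar\{\check{y}_1,\hat{y}_2\}$. Summing these contributions yields exactly the three stated relations.

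Finally I would verify that no new index $1$ contributions appear. The only potential new source is curves over the outer region $A+B+C$, the mirror of case $(4)$ in the proof of Lemma \ref{lemma-hopf}. The Fredholm index depends only on $\chi$ and the Maslov index (Lemma \ref{lemma-index}), both of which are invariant under reflection, so the possible indices remain $n$, $2n-1$, and $3n-2$, none of which equals $1$ for $n=2$ or $n>3$. The main obstacle I expect is making the reflection bijection genuinely rigorous at the level of $J^\lozenge$ rather than the integrable complex structure; this is handled precisely as in Lemma \ref{lemma-hopf}, by invoking the freedom in the transversality perturbation and the independence of the mod $2$ count on its generic choice.
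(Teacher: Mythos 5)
Your proposal is correct and matches the paper's own (very brief) justification: the paper likewise obtains the right-handed relations by the mirror bijection that exchanges positive and negative ends and swaps checks and hats in the curve count of Lemma \ref{lemma-hopf}. Your translation of the region-$A$, $B$, $C$ contributions and the index check ruling out new $A+B+C$ curves reproduce exactly the stated relations, so no further comment is needed.
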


\begin{corollary}
    If we set $|\{x_1,x_2\}|=0$, then $Kh^\sharp(\widehat{\sigma}_{\mathrm{right\,Hopf}})$ is freely generated by the following generators with the corresponding relative grading (mod $n-2$):
    \begin{center}
    \begin{tabular}{ |c|c| } 
     \hline
     generators & grading\\
     \hline
     $\{x_1,x_2\}$ & $0$ \\ 
     $\{x_1,z_2\}$ & $-2$\\
     $\{\hat{y}_1,\hat{y}_2\}$ & $-2$ \\ 
     $\{\check{y}_1,\check{y}_2\}$ & $-4$ \\ 
     \hline
    \end{tabular}
    \end{center}
\end{corollary}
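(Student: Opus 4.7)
The plan is to mimic the proof of the corresponding corollary for the left-handed Hopf link, now using the three differential relations supplied by the preceding lemma together with the mirror-symmetric relation between the two link diagrams. First I would enumerate the eight $2$-tuple generators of $\widehat{CF}(\widetilde{W},\widetilde{h}_{\sigma_{\mathrm{right\,Hopf}}}(\widetilde{\boldsymbol{a}}),\widetilde{\boldsymbol{a}})$: the four tuples $\{x_1,x_2\}$, $\{x_1,z_2\}$, $\{x_2,z_1\}$, $\{z_1,z_2\}$ and the four tuples $\{\check{y}_1,\check{y}_2\}$, $\{\hat{y}_1,\hat{y}_2\}$, $\{\check{y}_1,\hat{y}_2\}$, $\{\hat{y}_1,\check{y}_2\}$. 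I would then argue that the only index-one pseudoholomorphic curves are those listed in the lemma, by running through the same case analysis as in the proof of Lemma \ref{lemma-hopf}, ruling out contributions whose projections to $D$ are the larger regions via an $\mathrm{ind}(u)$ computation based on Lemma \ref{lemma-index} and Lemma \ref{lemma-diff}.

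Next I would read off cohomology working mod $2$. From $d\{z_1,z_2\} = \hbar(\{\check{y}_1,\hat{y}_2\} + \{\hat{y}_1,\check{y}_2\})$, the tuple $\{z_1,z_2\}$ is not a cocycle and the sum $\{\check{y}_1,\hat{y}_2\} + \{\hat{y}_1,\check{y}_2\}$ is a coboundary; moreover each of $\{\check{y}_1,\hat{y}_2\}$ and $\{\hat{y}_1,\check{y}_2\}$ has nonzero differential so neither is a cocycle individually. The tuples $\{x_1,z_2\}$ and $\{x_2,z_1\}$ are cocycles, and their sum equals $\hbar^{-1}d\{\check{y}_1,\hat{y}_2\}$, so only one independent class survives, represented by $\{x_1,z_2\}$. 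Finally $\{x_1,x_2\}$, $\{\check{y}_1,\check{y}_2\}$ and $\{\hat{y}_1,\hat{y}_2\}$ are cocycles not appearing in any differential, so they give three further generators of $Kh^\sharp(\widehat{\sigma}_{\mathrm{right\,Hopf}})$.

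For the gradings I would invoke the curve-level bijection between the left-handed and right-handed Hopf diagrams described in the paragraph above the lemma: exchanging positive and negative ends and swapping checks with hats. By the Maslov index formula used in the proof of Lemma \ref{lemma-diff} together with Lemma \ref{lemma-index}, this end-reversal negates the Maslov index contribution to the relative grading; thus the gradings for the right-handed case are obtained by negating those of the left-handed corollary. With the normalization $|\{x_1,x_2\}|=0$, this yields the desired values $|\{x_1,z_2\}|=-2$, $|\{\hat{y}_1,\hat{y}_2\}|=-2$ and $|\{\check{y}_1,\check{y}_2\}|=-4$, all taken mod $n-2$.

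The main obstacle is justifying the claimed mirror bijection between the two moduli spaces carefully enough to extract the negation of the Maslov index: one must check that the Lefschetz fibration and the perturbed almost complex structure can be chosen so that the braid-level mirroring lifts to a (grading-reversing) bijection of pseudoholomorphic curves preserving the Fredholm index constraint $\mathrm{ind}=1$. Once this bijection is secured, all differentials and gradings transfer automatically and the computation reduces to the purely algebraic kernel/image count above.
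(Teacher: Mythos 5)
Your proposal is correct and follows essentially the same route as the paper, which states this corollary without further proof precisely because it reduces to the mod-2 kernel/image computation from the three differential relations of the preceding lemma (using that $\hbar$ is invertible in $\mathbb{F}\llbracket\hbar,\hbar^{-1}]$, so $\{x_1,z_2\}+\{x_2,z_1\}$ and $\{\check{y}_1,\hat{y}_2\}+\{\hat{y}_1,\check{y}_2\}$ are coboundaries) together with the grading-reversing, check/hat-swapping mirror bijection already asserted in the paragraph before the lemma. Your identification of the surviving classes and the negated gradings matches the paper's table exactly.
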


\subsection{Trefoils}
The left side of Figure \ref{trefoil} shows the 2-strand braid representation of a left-handed trefoil, where $`A\textrm'$ to $`E\textrm'$ denote the corresponding regions on the base.

\begin{figure}[ht]
    \centering
    \includegraphics[height=5cm]{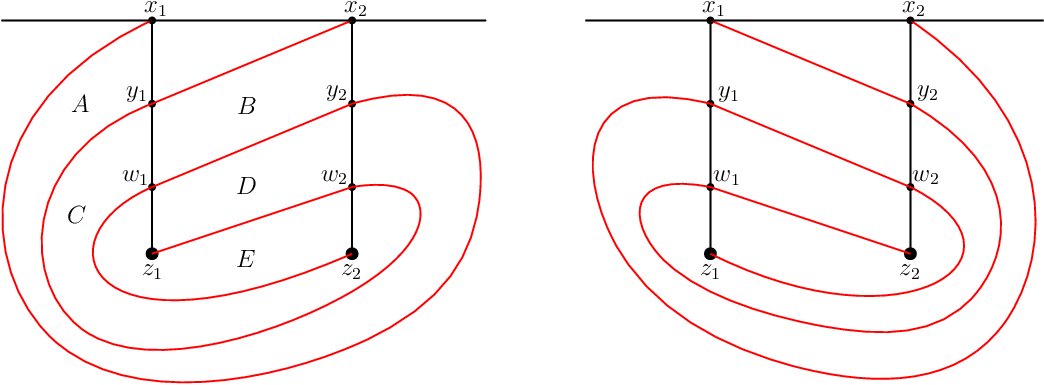}
    \caption{The braid representation of the left-handed trefoil (left) and the right-handed trefoil (right).}
    \label{trefoil}
\end{figure}

The curve counting for trefoils is more interesting than unknots and Hopf links. 
We first do a model calculation. 
Recall the notations in Step 3' of the proof of Theorem \ref{theorem-half}.
We consider curves with boundary on $T\cup L$: Let $\mathcal{M}'_2$ (2 stands for $n=2$) be the moduli space of holomorphic disks
\begin{equation*}
    u\colon\mathbb{R}\times[0,1]\to\mathbb{C}^2_{z_1',z_2'}
\end{equation*}
with standard complex structure $J_{std}$ satisfying
\begin{enumerate}
    \item $u(\mathbb{R}\times\{0\})\subset T$ and $u(\mathbb{R}\times\{1\})\subset L$;
    \item $p'\circ u$ has degree 1 over $\{|z|<1\}-\{-1\leq\mathrm{Re}\,z\leq0\}\cap\{\mathrm{Im}\,z=0\}$ and degree 0 otherwise;
    \item $u(-\infty,0)=w_1=(w_{11}',w_{12}')=(-1,1)$.
\end{enumerate}
It is easy to see that $\mathcal{M}'_2$ is homeomorphic to a line segment where $\partial\mathcal{M}'_2$ consists of two curves $z\mapsto(z,1)$ and $z\mapsto(-1,-z)$. 
Figure \ref{fig-model-trefoil} gives a schematic description of $\mathcal{M}'_2$ in the coordinates $(z_1',z_2')$.
For a given $\theta\in(-\pi,\pi)$, define the evaluation map
\begin{equation*}
    ev'_\theta\colon\mathcal{M}'_2\to S^1_{|z_1'|=1}
\end{equation*}
as the $z'_1$ projection of the intersection between $u$ and $p'^{-1}(e^{i\theta})$, which is shown by violet dots in the $z'_1$ column of Figure \ref{fig-model-trefoil}.

\begin{figure}[ht]
    \centering
    \includegraphics[width=8cm]{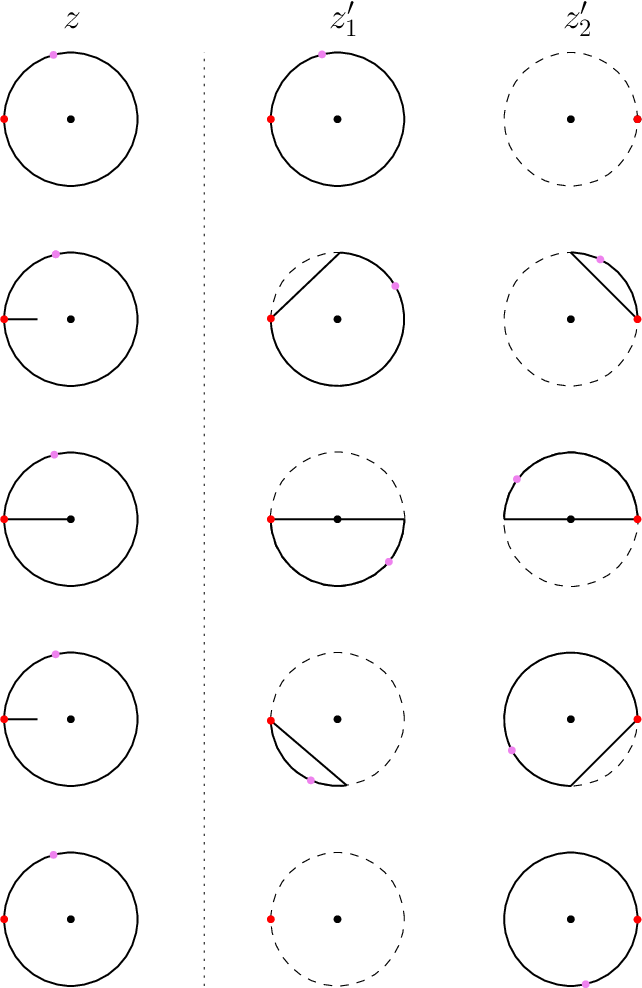}
    \caption{The left column denotes the base, where the red dot indicates $-1$, and the violet dot indicates $e^{i\theta}$.}
    \label{fig-model-trefoil}
\end{figure}

Note that $(-1,1)$ in $(z_1',z_2')$-coordinates equals $(0,i)$ in $(z_1,z_2)$-coordinates.
For general $n\geq2$, define $\mathcal{M}'_n$ similar to $\mathcal{M}'_2$, where $T,L$ in condition (1) are still the Lagrangian vanishing cycles over the unit circle and $\{-1\leq\mathrm{Re}\,z\leq0\}\cap\{\mathrm{Im}\,z=0\}$. Condition (3) is modified to
\begin{enumerate}
    \item[(3')] $u(-\infty,0)=w_1=(w_{11},w_{12},\dots,w_{1n})=(0,i,0,\dots,0)$.
\end{enumerate}

The moduli space $\mathcal{M}'_2$ in coordinate $(z_1,z_2)$ is viewed as the slice of $\mathcal{M}'_n$ that restricts to $0$ on $z_3,\dots,z_n$. 
Observe that for $\mathcal{M}'_n$, the coordinates $z_1,z_3,\dots,z_n$ are symmetric. 
Thus we can recover $\mathcal{M}'_n$ from $\mathcal{M}'_2$ by a symmetric rotation of $z_1$-coordinate. 
Each $u=(z_1,z_2)\in\mathcal{M}'_2$ corresponds to a $S^{n-2}$-family of curves in $\mathcal{M}'_n$:
\begin{equation*}
    u_{\lambda_1,\dots,\lambda_{n-1}}=(\lambda_1 z_1,z_2,\lambda_2 z_1,\dots,\lambda_{n-1} z_1),
\end{equation*}
where $\lambda_1^2+\dots+\lambda_{n-1}^2=1$ and $\lambda_i\in\mathbb{R}$ for $i=1,\dots,n-1$. 
    
The new evaluation map $ev_\theta$ is defined as the $n$-tuple of coordinates which projects to $e^{i\theta}\in\mathbb{C}_z$. 
Therefore, $\mathcal{M}'_n$ is homeomorphic to $D^{n-1}$. 
One can check that $ev_\theta\colon\mathcal{M}'_n\to S^{n-1}$ is a homeomorphism to its image.
As $\theta$ increases from $-\pi$ to $\pi$, the ratio of area of $ev_\theta(\mathcal{M}'_n)\subset S^{n-1}$ increases from 0 to 1.   
    

\begin{lemma}
    \label{lemma-trefoil}
    $\widehat{CF}(\widetilde{W},\widetilde{h}_{\sigma_\mathrm{left\,trefoil}}(\widetilde{\boldsymbol{a}}),\widetilde{\boldsymbol{a}})$ contains the following (mod 2) differential relations:
    \begin{align*}
        d&\{x_1,\check{w}_2\}=\hbar\{\check{y}_1,\hat{y}_2\}+\hbar\{\hat{y}_1,\check{y}_2\},\\
        d&\{x_1,\hat{w}_2\}=\hbar\{\hat{y}_1,\hat{y}_2\}+\hbar\{\check{y}_2,z_1\}+\hbar\{\check{y}_1,z_2\},\\
        d&\{x_2,\check{w}_1\}=\hbar\{\check{y}_1,\hat{y}_2\}+\hbar\{\hat{y}_1,\check{y}_2\},\\
        d&\{x_2,\hat{w}_1\}=\hbar\{\hat{y}_1,\hat{y}_2\}+\hbar\{\check{y}_1,z_2\}+\hbar\{\check{y}_2,z_1\},\\
        d&\{\check{y}_1,z_2\}=\hbar\{\check{w}_1,\hat{w}_2\}+\hbar\{\hat{w}_1,\check{w}_2\},\\
        d&\{\hat{y}_1,z_2\}=\hbar\{\hat{w}_1,\hat{w}_2\},\\
        d&\{\check{y}_2,z_1\}=\hbar\{\check{w}_1,\hat{w}_2\}+\hbar\{\hat{w}_1,\check{w}_2\},\\
        d&\{\hat{y}_2,z_1\}=\hbar\{\hat{w}_1,\hat{w}_2\},\\
        d&\{\check{w}_1,\hat{w}_2\}=\hbar\{z_1,z_2\},\\
        d&\{\hat{w}_1,\check{w}_2\}=\hbar\{z_1,z_2\},\\
        d&\{\check{y}_1,\hat{y}_2\}=\hbar\{\check{w}_1,\check{w}_2\},\\
        d&\{\hat{y}_1,\check{y}_2\}=\hbar\{\check{w}_1,\check{w}_2\}.
    \end{align*}
\end{lemma}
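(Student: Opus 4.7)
The plan is to mimic the proof of Lemma \ref{lemma-hopf} region-by-region. For each asserted differential relation, I would enumerate all loops on the base $D$ that cycle through the prescribed positive punctures, then the negative punctures, in the correct cyclic order, and that bound a subregion of $\bigcup\{A,B,C,D,E\}$ with nonnegative multiplicities. Each such domain is a candidate projection $p\circ v$ of a pseudoholomorphic curve contributing to the differential. For every candidate I would compute the Fredholm index using Lemma \ref{lemma-index}, accounting for the Maslov index by summing the base-direction contribution (determined by the multiplicities and the number of Lefschetz critical values enclosed) and the fiber-direction contribution dictated by the check/hat pattern, exactly in the spirit of equation (\ref{equation-index}). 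Only the index-1 candidates survive, and the mod-2 count of each is computed either by the quadrilateral formula of Lemma \ref{lemma-diff} or by the trefoil model calculation of $\mathcal{M}'_n$ that was set up immediately before the statement.

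The twelve relations organise into three families. The first family, $d\{x_i,w_j^{\dag}\}$, has domain covering a single crossing region ($A$ or $B$) of Figure \ref{trefoil} and reduces to the quadrilateral count of Lemma \ref{lemma-diff} when $w_j^{\dag}=\check{w}_j$; when $w_j^{\dag}=\hat{w}_j$ the fiber-index bookkeeping forces an extra Lefschetz critical value to lie inside the domain, so the count passes through the model space $\mathcal{M}'_n$ instead. The second family, $d\{y_i^{\dag},z_j\}$, is the mirror of the first under the swap of top/bottom generators and yields the analogous four relations with $\hat{w}\hat{w}$ replacing the critical-value contribution. The third family, $d\{\check{w}_1,\hat{w}_2\}$, $d\{\hat{w}_1,\check{w}_2\}$, $d\{\check{y}_1,\hat{y}_2\}$ and $d\{\hat{y}_1,\check{y}_2\}$, are pure quadrilateral counts over the central regions and follow verbatim from Lemma \ref{lemma-diff}. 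Any putative relation of shape $\{x_1,x_2\}\to\{y_i^{\dag},y_j^{\dag}\}$ or $\{y_i^{\dag},y_j^{\dag}\}\to\{z_1,z_2\}$ with a domain crossing multiple crossing regions is ruled out by the same index computation used in case (4) of Lemma \ref{lemma-hopf}: those indices are at least $n$, never $1$.

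The main obstacle is the pair $d\{x_1,\hat{w}_2\}$ and $d\{x_2,\hat{w}_1\}$, because the contribution $\hbar\{\hat y_1,\hat y_2\}$ comes from a domain whose projection to the base covers a region containing a Lefschetz critical value with branching. The Morse-tree count of Section \ref{section-model} does not directly apply; instead one must use the model calculation at the start of Section \ref{section-example}, where $\mathcal{M}'_n\cong D^{n-1}$ and the evaluation map $ev_\theta\colon\mathcal{M}'_n\to S^{n-1}$ sweeps a disk whose boundary matches the relevant Morse-Bott Reeb chord family. One checks that this evaluation map has algebraic degree $1\pmod 2$ onto the target disk, so when glued to the quadrilateral piece (counted by Lemma \ref{lemma-diff}) and the Morse trees on $S^{n-1}$ one obtains exactly one curve mod $2$ for each of the three terms $\{\hat y_1,\hat y_2\}$, $\{\check y_2,z_1\}$ and $\{\check y_1,z_2\}$. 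Once this count is done the remaining eleven relations are routine, and together they exhaust all index-1 curves so the listed equalities are the full differential on the generators involved.
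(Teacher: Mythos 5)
Your overall strategy -- enumerate admissible domains region by region, filter by the Fredholm index, and count via Lemma \ref{lemma-diff} or the model space $\mathcal{M}'_n$ -- is the paper's strategy, but you have assigned the tools to the wrong cases, and the hardest case is missing. First, the term $\hbar\{\hat{y}_1,\hat{y}_2\}$ in $d\{x_1,\hat{w}_2\}$ is \emph{not} the obstacle you describe: its domain is the single region $A$, which contains no Lefschetz critical value, and it is a pure quadrilateral count handled by Lemma \ref{lemma-diff} (one check at the positive ends plus two hats at the negative ends gives index $1$ in (\ref{equation-index})). The contributions that genuinely require the model space $\mathcal{M}'_n$ are $\hbar\{\check{y}_2,z_1\}$ and $\hbar\{\check{y}_1,z_2\}$, whose domains are the composite regions $A+C+E$ and $A+D+E$; there the curve passes over intermediate intersection points and a critical value, forcing a slit, and one must analyze both possible slit directions and show that the two configurations together sweep all of the $S^{n-1}$-family so that the point constraint is hit once mod $2$.

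Second, and more seriously, the relations $d\{\check{y}_1,\hat{y}_2\}=d\{\hat{y}_1,\check{y}_2\}=\hbar\{\check{w}_1,\check{w}_2\}$ do \emph{not} ``follow verbatim from Lemma \ref{lemma-diff}'': their domain is $C+D+E$, which contains two Lefschetz critical values. The paper handles this by stretching the region horizontally into two pieces joined along a point constraint, applying the model calculation twice, and separately arguing that configurations with a long slit (which would cut off a disk with an interior branch point) contribute $0$. That same stretching argument is also what shows the count for $\{\hat{y}_1,\hat{y}_2\}\to\{\check{w}_1,\hat{w}_2\}$ is $0$ mod $2$ -- the evaluation image sweeps a quarter of $S^{n-1}$ from each slit direction and the two contributions cancel -- which is needed to confirm that no unlisted index-$1$ terms appear. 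None of this appears in your proposal, so as written it does not establish the four relations involving the $y$-generators at the positive ends, nor the completeness of the list.
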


\begin{proof}   
    All nontrivial index 1 curves with count 1 (mod 2) are listed as follows:
    \begin{center}
    \begin{tabular}{ |c|c| } 
        \hline
        regions & ends of generators\\
        \hline
        $A$ & $\{x_1,\check{w}_2\}\to\{\check{y}_1,\hat{y}_2\}/\{\hat{y}_1,\check{y}_2\}$, $\{x_1,\hat{w}_2\}\to\{\hat{y}_1,\hat{y}_2\}$\\ 
        $B$ & $\{x_2,\check{w}_1\}\to\{\check{y}_1,\hat{y}_2\}/\{\hat{y}_1,\check{y}_2\}$, $\{x_2,\hat{w}_1\}\to\{\hat{y}_1,\hat{y}_2\}$\\ 
        $C$ & $\{\check{y}_1,z_2\}\to\{\check{w}_1,\hat{w}_2\}/\{\hat{w}_1,\check{w}_2\}$, $\{\hat{y}_1,z_2\}\to\{\hat{w}_1,\hat{w}_2\}$\\ 
        $D$ & $\{\check{y}_2,z_1\}\to\{\check{w}_1,\hat{w}_2\}/\{\hat{w}_1,\check{w}_2\}$, $\{\hat{y}_2,z_1\}\to\{\hat{w}_1,\hat{w}_2\}$\\ 
        $E$ & $\{\check{w}_1,\hat{w}_2\}\to\{z_1,z_2\}$, $\{\hat{w}_1,\check{w}_2\}\to\{z_1,z_2\}$\\
        $A+C+E$ & $\{x_1,\hat{w}_2\}\to\{\check{y}_2,z_1\}$\\
        $B+C+E$ & $\{x_2,\hat{w}_1\}\to\{\check{y}_2,z_1\}$\\
        $A+D+E$ & $\{x_1,\hat{w}_2\}\to\{\check{y}_1,z_2\}$\\
        $B+D+E$ & $\{x_2,\hat{w}_1\}\to\{\check{y}_1,z_2\}$\\
        $C+D+E$ & $\{\check{y}_1,\hat{y}_2\}/\{\hat{y}_1,\check{y}_2\}\to\{\check{w}_1,\check{w}_2\}$\\
        \hline
    \end{tabular}
    \end{center}
    
    First consider the region `$B+D+E$'. 
    The generators $\hat{w}_1,\check{y}_1$ are viewed as point constraints and $x_2,z_2$ impose no constraints on the Morse-Bott family.
    There are two possible slits: going down from ${w}_1$ or going to the right from ${w}_1$.
    If the slit goes down, we can apply the model of Figure \ref{fig-model-trefoil}, where $\hat{w}_1$ corresponds to the red dot $(0,i,0,\dots,0)$ and $\check{y}_1$ corresponds to the violet dot over $e^{i\theta}$ for some $\theta\in(-\pi,\pi)$.
    If ${w}_1,{y}_1$ are close on the base, then $\theta$ is close to $\pi$. 
    Therefore, $ev_\theta(\mathcal{M}'_n)\subset S^{n-1}$ takes almost all of $S^{n-1}$ and hence $\#ev^{-1}_{\theta}(\check{y}_1)=1$ mod $2$. 
    Note that the disparity between $\theta$ and $\pi$ prevents $\hat{w}_1,\check{y}_1$ from being close on the fiber direction (with respect to the symplectic parallel transport away from $z_1,z_2$), which contributes to the complement $S^{n-1}\backslash ev_\theta(\mathcal{M}'_n)$.
    However, even if ${w}_1,{y}_1$ are not close on the base, the count remains the same: As the slit goes to the right from $w_1$, $\hat{w}_1,\check{y}_1$ are getting closer on the fiber direction, and this part takes care of the complement $S^{n-1}\backslash ev_\theta(\mathcal{M}'_n)$.
    The case of `$A+C+E$', `$B+C+E$' and `$A+D+E$' are similar.

    \begin{figure}[ht]
        \centering
        \includegraphics[height=4cm]{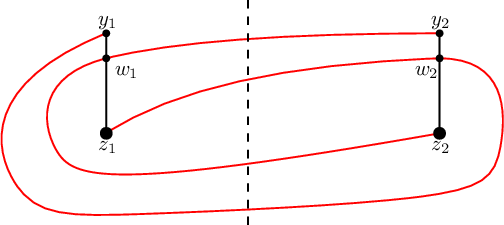}
        \caption{Horizontal stretch of region $C+D+E$.}
        \label{CDE}
    \end{figure}
    
    Next we consider the region `$C+D+E$'.
    We stretch the region `$C+D+E$' as depicted in Figure \ref{CDE}.
    The dashed line is viewed as a gluing condition of a point constraint of both sides.
    There are two different cases:
    \begin{enumerate}
        \item $\{\hat{y}_1,\check{y}_2\}\to\{\check{w}_1,\check{w}_2\}$. On the left side of Figure \ref{CDE}, $\hat{y}_1$ and $\check{w}_1$ are point constraints, which fixes a unique holomorphic disk by similar reasons as case `$B+D+E$'. Note that the slit going to the left from $w_1$ cannot be too long (across the dashed line): If the slit is long, then it cuts the left part into a trivial quadrilateral and a disk region with a branch point $z_1$ inside. However, the (mod 2) count of a holomorphic disk with a branch point inside is 0. 
        
        The left disk then fixes the point constraint of the dashed line. The right part of Figure \ref{CDE} now has two point constraints: $\check{w}_2$ and the dashed line. 
        We then apply the model of Figure \ref{fig-model-trefoil} again, where $\check{w}_2$ corresponds to the red dot and the dashed line corresponds to the violet dot. Similar to the discussion of `$B+D+E$', there exists a unique holomorphic disk.
        Note that the slit going to the left from $w_2$ cannot be too long for the same reason as above.
        To conclude, the (mod 2) count is 1.
        
        The case of $\{\check{y}_1,\hat{y}_2\}\to\{\check{w}_1,\check{w}_2\}$ is similar and the (mod 2) count is 1.\\

        \item $\{\hat{y}_1,\hat{y}_2\}\to\{\check{w}_1,\hat{w}_2\}$. Similar to (1), $\hat{y}_1$ and $\check{w}_1$ fix a unique holomorphic disk on the left side of Figure \ref{CDE} and gives the dashed line a point constraint.
        The dashed line and $\hat{y}_2$ are two point constraints on the right side. 
        If the slit from $w_2$ goes down, we apply the model of Figure \ref{fig-model-trefoil} where $\hat{y}_2$ corresponds to the red dot and the dashed line corresponds to the violet dot. 
        Note that $ev_\theta(\mathcal{M}'_n)$ sweeps approximately $1/4$ of $S^{n-1}$.
        One can check that the slit going to the left from $w_2$ also sweeps $1/4$ of $S^{n-1}$ (by symplectic parallel transport), and it cancels out with the contribution from $ev_\theta(\mathcal{M}'_n)$.
        As a result, the (mod 2) count is 0.

        The case of $\{\hat{y}_1,\hat{y}_2\}\to\{\hat{w}_1,\check{w}_2\}$ is similar and the (mod 2) count is 0.
    \end{enumerate}

    The remaining cases are similar to the discussion of Hopf links and we omit the details.

\end{proof}

\begin{corollary}
    If we set $|\{x_1,x_2\}|=0$, then $Kh^\sharp(\widehat{\sigma}_{\mathrm{left\,trefoil}})$ is freely generated by the following generators with the corresponding relative grading (mod $n-2$):
    \begin{center}
    \begin{tabular}{ |c|c| } 
     \hline
     generators & grading\\
     \hline
     $\{x_1,x_2\}$ & $0$ \\ 
     $\{x_1,\check{w}_2\}+\{x_2,\check{w}_1\}$ & $2$\\
     $\{\check{y}_1,\check{y}_2\}$ & $2$ \\   
     $\{x_1,\hat{w}_2\}+\{x_2,\hat{w}_1\}$ & $3$\\
     $\{\check{y}_1,z_2\}+\{\check{y}_2,z_1\}$ & $4$\\
     $\{\hat{y}_1,z_2\}+\{\hat{y}_2,z_1\}$ & $5$\\
     \hline
    \end{tabular}
    \end{center}
\end{corollary}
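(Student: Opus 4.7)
The plan is to derive the corollary from Lemma \ref{lemma-trefoil} by a direct linear-algebra calculation in the finite-dimensional cochain complex $\widehat{CF}(\widetilde{W},\widetilde{h}_{\sigma_{\mathrm{left\,trefoil}}}(\widetilde{\boldsymbol{a}}),\widetilde{\boldsymbol{a}})$ over $\mathbb{F}$ of characteristic $2$. First I enumerate the generators: since the $2$-braid $\sigma_1^3$ induces the transposition on strands, each $2$-tuple consists of one component in $\widetilde{a}_1\cap\widetilde{h}_\sigma(\widetilde{a}_2)$ and the other in $\widetilde{a}_2\cap\widetilde{h}_\sigma(\widetilde{a}_1)$, with components drawn from $\{x_i,\check{y}_i,\hat{y}_i,\check{w}_i,\hat{w}_i,z_i\}$. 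Then I read off $d$ directly from Lemma \ref{lemma-trefoil}; the region-by-region case analysis in the proof of that lemma exhausts all possible projections of an index-$1$ disk to the base $D$, so every generator not appearing as a left-hand side is a cocycle.

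The main step is the cohomology computation. The twelve listed relations pair up mod $2$ by region: for each of $A,B,C,D,E$ there are two conjugate generators with identical right-hand sides, so the diagonal sums
\begin{equation*}
    \{x_1,\check{w}_2\}+\{x_2,\check{w}_1\},\;\{x_1,\hat{w}_2\}+\{x_2,\hat{w}_1\},\;\{\check{y}_1,z_2\}+\{\check{y}_2,z_1\},\;\{\hat{y}_1,z_2\}+\{\hat{y}_2,z_1\}
\end{equation*}
and
\begin{equation*}
    \{\check{y}_1,\hat{y}_2\}+\{\hat{y}_1,\check{y}_2\},\qquad\{\check{w}_1,\hat{w}_2\}+\{\hat{w}_1,\check{w}_2\}
\end{equation*}
are all cocycles. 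The bottom two sums are immediately exact, since $\hbar\bigl(\{\check{y}_1,\hat{y}_2\}+\{\hat{y}_1,\check{y}_2\}\bigr)=d\{x_1,\check{w}_2\}$ and $\hbar\bigl(\{\check{w}_1,\hat{w}_2\}+\{\hat{w}_1,\check{w}_2\}\bigr)=d\{\check{y}_1,z_2\}$. Moreover, the relation $d\{x_1,\hat{w}_2\}=\hbar\bigl(\{\hat{y}_1,\hat{y}_2\}+\{\check{y}_2,z_1\}+\{\check{y}_1,z_2\}\bigr)$ shows that $\{\hat{y}_1,\hat{y}_2\}$ is cohomologous to $\{\check{y}_1,z_2\}+\{\check{y}_2,z_1\}$ and contributes no new class. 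Together with the generators $\{x_1,x_2\}$ and $\{\check{y}_1,\check{y}_2\}$, which are trivially closed and cannot be coboundaries because nothing in the differential table maps to them, one recovers the six listed representatives.

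The relative gradings then follow from the conventions $|\hbar|=2-n\equiv 0\pmod{n-2}$ and $|d|=1$: each relation $d\alpha=\hbar\beta+\cdots$ forces $|\beta|=|\alpha|+1\pmod{n-2}$, and propagating from the normalization $|\{x_1,x_2\}|=0$ along the chains of relations in Lemma \ref{lemma-trefoil} reproduces the grading column. The main obstacle, in my view, is the thorough bookkeeping in the second step: one must carefully inspect all of the roughly three dozen generators of the chain complex, verify that every ``mixed'' tuple not appearing in the lemma is either paired off as a coboundary via an appropriate chain of relations or cancels against another cocycle, and confirm that the six proposed classes are linearly independent modulo the image of $d$. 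This is routine once the full differential table is laid out, but requires care with the Morse--Bott pairs $(\check{y}_i,\hat{y}_i)$ and $(\check{w}_i,\hat{w}_i)$ to avoid over- or undercounting classes.
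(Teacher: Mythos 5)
Your cohomology computation is on the right track: from the twelve relations of Lemma \ref{lemma-trefoil}, the six diagonal sums (of pairs of relations with identical right-hand sides mod $2$) are cocycles, two of those sums are immediately exact, $\{\hat{y}_1,\hat{y}_2\}$ is cohomologous to $\{\check{y}_1,z_2\}+\{\check{y}_2,z_1\}$ via $d\{x_1,\hat{w}_2\}$, and together with $\{x_1,x_2\}$, $\{\check{y}_1,\check{y}_2\}$ this produces exactly six independent classes. (One small correction in your bookkeeping: the chain complex has $18$ generators, not ``roughly three dozen''; the $36$ formal pairs $\{p_1,q_2\}$ are not all genuine intersection tuples.)

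The genuine gap is in the grading argument. You propose to ``propagate from the normalization $|\{x_1,x_2\}|=0$ along the chains of relations in Lemma \ref{lemma-trefoil},'' but $\{x_1,x_2\}$ does not appear in any of the listed differential relations --- it is an isolated vertex in the graph whose edges are the index-$1$ curves. The same is true of $\{\check{y}_1,\check{y}_2\}$. In fact the differential graph has several disconnected pieces (the $\{x_1,\check{w}_2\}\to\{\check{y}_1,\hat{y}_2\}\to\{\check{w}_1,\check{w}_2\}$ chain is disjoint from the $\{x_1,\hat{w}_2\}\to\{\check{y}_1,z_2\}\to\{\check{w}_1,\hat{w}_2\}\to\{z_1,z_2\}$ chain, which is in turn disjoint from $\{\hat{y}_1,z_2\}\to\{\hat{w}_1,\hat{w}_2\}$), so ``chasing $d$'' can only fix relative gradings \emph{within} each connected component, not between them. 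To pin down the grading column you need Maslov-index computations for curves that are \emph{not} differentials, analogous to case (4) in the proof of Lemma \ref{lemma-hopf}, where the disk over $A+B+C$ from $\{x_1,x_2\}$ to $\{y_1,y_2\}$ has Fredholm index $n$, $2n-1$, or $3n-2$ depending on the checks and hats. The corresponding higher-index quadrilaterals over $A$, $A+C$, $A+C+E$, etc.\ in the trefoil picture are what connect $\{x_1,x_2\}$ to the rest and set the absolute normalization; without them the table in the corollary is not reproduced.
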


~\\
The computation for the right-handed trefoil is similar and the proof is omitted:

\begin{lemma}
    $\widehat{CF}(\widetilde{W},\widetilde{h}_{\sigma_\mathrm{right\,trefoil}}(\widetilde{\boldsymbol{a}}),\widetilde{\boldsymbol{a}})$ contains the following (mod 2) differential relations:
    \begin{align*}
        d&\{\check{y}_1,\hat{y}_2\}=\hbar\{x_1,\hat{w}_2\}+\hbar\{x_2,\hat{w}_1\},\\
        d&\{\hat{y}_1,\check{y}_2\}=\hbar\{x_1,\hat{w}_2\}+\hbar\{x_2,\hat{w}_1\},\\
        d&\{\check{y}_1,\check{y}_2\}=\hbar\{x_1,\check{w}_2\}+\hbar\{x_2,\check{w}_1\},\\
        d&\{\check{w}_1,\hat{w}_2\}=\hbar\{\hat{y}_1,z_2\}+\hbar\{\hat{y}_2,z_1\},\\
        d&\{\hat{w}_1,\check{w}_2\}=\hbar\{\hat{y}_1,z_2\}+\hbar\{\hat{y}_2,z_1\},\\
        d&\{\check{w}_1,\check{w}_2\}=\hbar\{\check{y}_1,z_2\}+\hbar\{\check{y}_2,z_1\},\\
        d&\{z_1,z_2\}=\hbar\{\check{w}_1,\hat{w}_2\}+\hbar\{\hat{w}_1,\check{w}_2\},\\
        d&\{\hat{y}_1,z_2\}=\hbar\{x_1,\check{w}_2\}+\hbar\{x_2,\check{w}_1\},\\
        d&\{\hat{y}_2,z_1\}=\hbar\{x_1,\check{w}_2\}+\hbar\{x_2,\check{w}_1\},\\
        d&\{\hat{w}_1,\hat{w}_2\}=\hbar\{\check{y}_1,\hat{y}_2\}+\hbar\{\hat{y}_1,\check{y}_2\}.
    \end{align*}
\end{lemma}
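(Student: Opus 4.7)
The plan is to deduce the right-handed trefoil relations from the left-handed ones via a mirror-symmetry correspondence, exactly analogous to the Hopf link case treated above. First, I would observe that the braid representation of the right-handed trefoil on the right of Figure \ref{trefoil} is the mirror image of the left-handed one, so there is a tautological bijection between pseudoholomorphic curves in $\widehat{CF}(\widetilde{W},\widetilde{h}_{\sigma_\mathrm{left\,trefoil}}(\widetilde{\boldsymbol{a}}),\widetilde{\boldsymbol{a}})$ and those in $\widehat{CF}(\widetilde{W},\widetilde{h}_{\sigma_\mathrm{right\,trefoil}}(\widetilde{\boldsymbol{a}}),\widetilde{\boldsymbol{a}})$ obtained by reversing the $\mathbb{R}$-direction of the target $\mathbb{R}\times[0,1]\times\widetilde{W}$. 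Under this reversal, positive and negative punctures swap, and top/bottom (check/hat) Morse–Bott generators in the fiber direction swap as well (since the role of sources and sinks of $\nabla(f_i-f_j)$ reverses when we read the curve in the opposite time direction).

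Applying this dictionary termwise to each of the twelve relations of Lemma \ref{lemma-trefoil}, every differential $d\{\alpha\}=\hbar\{\beta\}+\dots$ in the left-handed complex becomes a relation $d\{\widehat{\beta}\}=\hbar\{\widehat{\alpha}\}+\dots$ in the right-handed complex, where $\widehat{\cdot}$ denotes the swap of checks and hats. Checking case by case gives exactly the ten relations asserted in the final lemma; note that some pairs of left-handed relations collapse to a single right-handed one after the swap (for instance, $d\{\check{y}_1,z_2\}$ and $d\{\check{y}_2,z_1\}$ both involve $\{\check{w}_1,\hat{w}_2\}+\{\hat{w}_1,\check{w}_2\}$, which after the swap produces the single relation $d\{z_1,z_2\}=\hbar\{\check{w}_1,\hat{w}_2\}+\hbar\{\hat{w}_1,\check{w}_2\}$), while other left-handed relations split into two.

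To make the argument rigorous, the one technical point I would verify is that the mirror map genuinely preserves algebraic $\mathrm{mod}\,2$ counts of index-$1$ curves. For each of the base regions $A,B,C,D,E$ and their unions $A+C+E,B+C+E,A+D+E,B+D+E,C+D+E$ in the right-handed picture, I would reproduce the enumeration as in the proof of Lemma \ref{lemma-trefoil}: reducing to the model calculation of $\mathcal{M}'_n$ described in Figure \ref{fig-model-trefoil} (with $w_1,w_2$ now playing the role of top generators at positive ends instead of bottom generators at negative ends), then using the Morse-tree package of Section \ref{subsection-gradient} and the quadrilateral Lemma \ref{lemma-diff} for the interior branch point contributions. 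In particular, the same cancellation phenomenon observed in case (2) of region $C+D+E$ for the left-handed trefoil now appears for the right-handed analogue of region $A+B+C$ and rules out extra spurious terms.

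The main obstacle, as in the left-handed case, is the region $C+D+E$ (or its right-handed analogue) where the stretching argument of Figure \ref{CDE} introduces two Morse–Bott endpoints joined by a gluing condition across a dashed line, and one must carefully account for the contribution of the slit that emerges horizontally from $w_2$. The delicate point is the $1/4$-to-$1/4$ cancellation that killed $\{\hat{y}_1,\hat{y}_2\}\to\{\check{w}_1,\hat{w}_2\}$ in the left-handed case; under the mirror swap this translates into the absence of a term like $\hbar\{\check{w}_1,\check{w}_2\}$ in $d\{\check{y}_1,\check{y}_2\}$ (which would otherwise spoil the symmetry). Once this cancellation is checked on the right-handed side by the same model computation, all ten relations follow at once.
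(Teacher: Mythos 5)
Your mirror-symmetry dictionary (reversing the $\mathbb{R}$-direction, swapping positive and negative ends, and swapping checks and hats) is exactly the paper's approach — it is stated explicitly for Hopf links and declared ``similar and omitted'' for trefoils — and the rule $d_L\{\alpha\}\ni\hbar\{\beta\}\Rightarrow d_R\{\overline{\beta}\}\ni\hbar\{\overline{\alpha}\}$ does reproduce all ten right-handed relations from the twelve left-handed ones. Two of your illustrative asides misapply the dictionary, though: the left-handed relations $d\{\check{y}_1,z_2\}$ and $d\{\check{y}_2,z_1\}$ mirror to $d\{\check{w}_1,\hat{w}_2\}=d\{\hat{w}_1,\check{w}_2\}=\hbar\{\hat{y}_1,z_2\}+\hbar\{\hat{y}_2,z_1\}$, while $d\{z_1,z_2\}$ comes from the left-handed $d\{\check{w}_1,\hat{w}_2\}=d\{\hat{w}_1,\check{w}_2\}=\hbar\{z_1,z_2\}$, and the vanishing of $\{\hat{y}_1,\hat{y}_2\}\to\{\check{w}_1,\hat{w}_2\}$ mirrors to the absence of $\hbar\{\check{y}_1,\check{y}_2\}$ in $d\{\hat{w}_1,\check{w}_2\}$, not of $\hbar\{\check{w}_1,\check{w}_2\}$ in $d\{\check{y}_1,\check{y}_2\}$.
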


\begin{corollary}
    If we set $|\{x_1,x_2\}|=0$, then $Kh^\sharp(\widehat{\sigma}_{\mathrm{right\,trefoil}})$ is freely generated by the following generators with the corresponding relative grading (mod $n-2$):
    \begin{center}
    \begin{tabular}{ |c|c| } 
     \hline
     generators & grading\\
     \hline
     $\{x_1,x_2\}$ & $0$ \\ 
     $\{x_1,\hat{w}_2\}$ & $-2$\\
     $\{\hat{y}_1,\hat{y}_2\}$ & $-2$ \\   
     $\{x_1,\check{w}_2\}$ & $-3$\\
     $\{\hat{y}_1,z_2\}+\{\hat{y}_2,z_1\}$ & $-4$\\
     $\{\check{y}_1,z_2\}$ & $-5$\\
     \hline
    \end{tabular}
    \end{center}
\end{corollary}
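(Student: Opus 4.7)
The plan is to read $Kh^\sharp(\widehat{\sigma}_{\mathrm{right\,trefoil}})$ off the differential table of the preceding lemma by linear algebra, and then fix the relative grading by mirroring the left-handed computation, just as was done for the right-handed Hopf link.

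First I would identify the cocycles. The elements $\{x_1,x_2\}$, $\{x_1,\check{w}_2\}$, $\{x_2,\check{w}_1\}$, $\{x_1,\hat{w}_2\}$, $\{x_2,\hat{w}_1\}$, $\{\hat{y}_1,\hat{y}_2\}$, $\{\check{y}_1,z_2\}$, and $\{\check{y}_2,z_1\}$ are never the source of a relation in the lemma, and no further index~$1$ differential out of them is possible by the Fredholm parity argument used at the end of the proof of Lemma~\ref{lemma-cycle}, so they are cocycles. In addition, the combinations $\{\hat{y}_1,z_2\}+\{\hat{y}_2,z_1\}$, $\{\check{y}_1,\hat{y}_2\}+\{\hat{y}_1,\check{y}_2\}$, and $\{\check{w}_1,\hat{w}_2\}+\{\hat{w}_1,\check{w}_2\}$ are cocycles because within each pair the two listed differentials coincide.

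Next I would read off the coboundaries. Multiplying the relevant relations of the lemma by $\hbar^{-1}$ (a unit in the coefficient ring) shows that
\[
\{x_1,\hat{w}_2\}+\{x_2,\hat{w}_1\},\quad \{x_1,\check{w}_2\}+\{x_2,\check{w}_1\},\quad \{\check{y}_1,z_2\}+\{\check{y}_2,z_1\}\ \in\ \mathrm{im}\,d,
\]
and analogously for the two remaining symmetrized $\check{}/\hat{}$ sums. Meanwhile $\{\hat{y}_1,z_2\}+\{\hat{y}_2,z_1\}$ appears on no right-hand side, hence is not a coboundary. The identifications $\{x_1,\hat{w}_2\}\sim\{x_2,\hat{w}_1\}$, $\{x_1,\check{w}_2\}\sim\{x_2,\check{w}_1\}$, and $\{\check{y}_1,z_2\}\sim\{\check{y}_2,z_1\}$ then cut the cocycle module down to exactly the six classes listed in the corollary; every remaining generator ($\{\check{y}_1,\check{y}_2\}$, $\{\check{w}_1,\check{w}_2\}$, $\{\hat{w}_1,\hat{w}_2\}$, $\{z_1,z_2\}$, and the un-symmetrized $\{\hat{y}_i,z_j\}$) carries a nonzero differential and contributes nothing to cohomology. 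Freeness over the coefficient ring follows from the gradings being distinct, which I verify next.

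For the relative grading, I would invoke the mirror identification already used for the right-handed Hopf link: each pseudoholomorphic curve for the right-handed trefoil corresponds to one for the left-handed trefoil with positive and negative ends exchanged, and checks and hats swapped. By Lemma~\ref{lemma-index} this reverses every relative Maslov index, and hence every relative grading. Normalizing $|\{x_1,x_2\}|=0$ in both tables, the left-handed values $(0,2,2,3,4,5)$ translate into $(0,-2,-2,-3,-4,-5)$ in the correspondingly dualized slots, which is exactly the stated table. The main obstacle is the clean verification that no index-$1$ differential beyond those already listed in the preceding lemma has been missed; this is handled by the same integrality/parity constraint on $\mathrm{ind}=(n-2)(\chi-\kappa)+\mu$ used in the proof of Lemma~\ref{lemma-cycle}, which rules out index~$1$ for the relevant even values of $\chi$ whenever $n=2$ or $n>3$.
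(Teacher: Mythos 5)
Your overall strategy -- read the cohomology off the lemma by linear algebra over the coefficient ring (where $\hbar$ is a unit) and transport the grading by the mirror bijection -- is exactly what the omitted proof amounts to. But your execution of the linear algebra contains a concrete error, and it matters.

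You assert that $\{\hat{y}_1,z_2\}+\{\hat{y}_2,z_1\}$ ``appears on no right-hand side, hence is not a coboundary.'' This is false for the lemma as stated: the relations
\begin{equation*}
d\{\check{w}_1,\hat{w}_2\}=\hbar\{\hat{y}_1,z_2\}+\hbar\{\hat{y}_2,z_1\},\qquad d\{\hat{w}_1,\check{w}_2\}=\hbar\{\hat{y}_1,z_2\}+\hbar\{\hat{y}_2,z_1\}
\end{equation*}
exhibit $\{\hat{y}_1,z_2\}+\{\hat{y}_2,z_1\}$ as $\hbar^{-1}d\{\check{w}_1,\hat{w}_2\}$, i.e.\ as a coboundary. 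There is a second, compensating omission in your list of cocycles: since $d\{\check{y}_1,\check{y}_2\}=d\{\hat{y}_1,z_2\}=d\{\hat{y}_2,z_1\}=\hbar(\{x_1,\check{w}_2\}+\{x_2,\check{w}_1\})$, the kernel contains a \emph{two}-dimensional space spanned by $\{\hat{y}_1,z_2\}+\{\hat{y}_2,z_1\}$ and $\{\check{y}_1,\check{y}_2\}+\{\hat{y}_1,z_2\}$, not just the one sum you list; so your claim that $\{\check{y}_1,\check{y}_2\}$ ``contributes nothing to cohomology'' is also wrong. The two mistakes cancel in the dimension count (one misses a cocycle, the other misses a coboundary; the cohomology is indeed $6$-dimensional), which is why you land on a table of the right size, but the argument as written does not establish it.

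Carrying out the elimination correctly from the lemma: the image of $d$ is spanned (up to the unit $\hbar$) by the six sums $\{x_1,\hat{w}_2\}+\{x_2,\hat{w}_1\}$, $\{x_1,\check{w}_2\}+\{x_2,\check{w}_1\}$, $\{\hat{y}_1,z_2\}+\{\hat{y}_2,z_1\}$, $\{\check{y}_1,z_2\}+\{\check{y}_2,z_1\}$, $\{\check{w}_1,\hat{w}_2\}+\{\hat{w}_1,\check{w}_2\}$, $\{\check{y}_1,\hat{y}_2\}+\{\hat{y}_1,\check{y}_2\}$, and the resulting basis of cohomology is $[\{x_1,x_2\}]$, $[\{x_1,\hat{w}_2\}]$, $[\{x_1,\check{w}_2\}]$, $[\{\hat{y}_1,\hat{y}_2\}]$, $[\{\check{y}_1,z_2\}]$, and $[\{\check{y}_1,\check{y}_2\}+\{\hat{y}_1,z_2\}]$ -- the last in degree $-4$. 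So if you take the preceding lemma at face value, the degree $-4$ generator in the corollary's table should be represented by $\{\check{y}_1,\check{y}_2\}+\{\hat{y}_1,z_2\}$ (equivalently $\{\check{y}_1,\check{y}_2\}+\{\hat{y}_2,z_1\}$), not by $\{\hat{y}_1,z_2\}+\{\hat{y}_2,z_1\}$, which is exact. Your proof cannot be repaired to yield the table exactly as printed without either correcting that entry or revisiting the lemma; you should flag this discrepancy rather than paper over it with the false ``appears on no right-hand side'' step. The mirror argument for the gradings $(0,-2,-2,-3,-4,-5)$ and the freeness claim are fine.
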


~\\

\printbibliography

\end{document}